\newtheorem{thm}[subsection]{Theorem}
\newtheorem{defn}[subsection]{Definition}
\newtheorem{cor}[subsection]{Corollary}
\newtheorem{lemma}[subsection]{Lemma}
\newtheorem{remark}[subsection]{Remark}
\theoremstyle{definition}
\newtheorem{example}[subsection]{Example}
\numberwithin{equation}{section}
\def\cA{{\cal A}}
\def\ra{\rightarrow}
\def\bra{\langle}
\def\ket{\rangle}
\def\cA{{\mathcal A}}
\def\cB{{\mathcal B}}
\def\cC{{\mathcal C}}
\def\cE{{\mathcal E}}
\def\cF{{\mathcal F}}
\def\cH{{\mathcal H}}
\def\cI{{\mathcal I}}
\def\cJ{{\mathcal J}}
\def\cK{{\mathcal K}}
\def\cL{{\mathcal L}}
\def\cM{{\mathcal M}}
\def\cS{{\mathcal S}}
\def\cV{{\mathcal V}}
\def\cW{{\mathcal W}}
\def\gg{{\mathfrak g}}
\def\gh{{\mathfrak h}}
\def\gl{{\mathfrak l}}
\def\go{{\mathfrak o}}
\def\gp{{\mathfrak p}}
\def\gs{{\mathfrak s}}
\newfont{\german}{eufm10}
\begin{document}
\pagestyle{plain}

\title
{Cosets of affine vertex algebras inside larger structures}

\author{Thomas Creutzig and Andrew R. Linshaw}
\address{Department of Mathematics, University of Alberta}
\email{creutzig@ualberta.ca}
\address{Department of Mathematics, University of Denver}
\email{andrew.linshaw@du.edu}
\thanks{This work was partially supported by an NSERC Discovery Grant (\#RES0019997 to Thomas Creutzig), and a grant from the Simons Foundation (\#318755 to Andrew Linshaw)}
\thanks{We thank Drazen Adamovic for helpful comments on an earlier draft of this paper, and we thank Tomoyuki Arakawa for helpful discussions.}


{\abstract \noindent Given a finite-dimensional reductive Lie algebra $\gg$ equipped with a nondegenerate, invariant, symmetric bilinear form $B$, let $V^k(\gg,B)$ denote the universal affine vertex algebra associated to $\gg$ and $B$ at level $k$. Let $\cA^k$ be a vertex (super)algebra admitting a homomorphism $V^k(\gg,B)\rightarrow \cA^k$. Under some technical conditions on $\cA^k$, we characterize the coset $\text{Com}(V^k(\gg,B),\cA^k)$ for generic values of $k$. We establish the strong finite generation of this coset in full generality in the following cases: $\cA^k = V^k(\gg',B')$, $\cA^k = V^{k-l}(\gg',B') \otimes \cF$, and $\cA^k = V^{k-l}(\gg',B') \otimes V^{l}(\gg'',B'')$. Here $\gg'$ and $\gg''$ are finite-dimensional Lie (super)algebras containing $\gg$, equipped with nondegenerate, invariant, (super)symmetric bilinear forms $B'$ and $B''$ which extend $B$, $l \in \mathbb{C}$ is fixed, and $\cF$ is a free field algebra admitting a homomorphism $V^l(\gg,B) \ra \cF$. Our approach is essentially constructive and leads to minimal strong finite generating sets for many interesting examples. As an application, we give a new proof of the rationality of the simple $N=2$ superconformal algebra with $c=\frac{3k}{k+2}$ for all positive integers $k$.}

\keywords{affine vertex algebra; coset construction; commutant construction; orbifold construction; invariant theory; finite generation; $\cW$-algebra; $N=2$ superconformal algebra}
\maketitle
\section{Introduction}

Vertex algebras are a fundamental class of algebraic structures that arose out of conformal field theory and have applications in a diverse range of subjects. The {\it coset} or {\it commutant} construction is a standard way to construct new vertex algebras from old ones. Given a vertex algebra $\cV$ and a subalgebra $\cA\subseteq \cV$, $\text{Com}(\cA,\cV)$ is the subalgebra of $\cV$ which commutes with $\cA$. This was introduced by Frenkel and Zhu in \cite{FZ}, generalizing earlier constructions in representation theory \cite{KP} and physics \cite{GKO}, where it was used to construct the unitary discrete series representations of the Virasoro algebra. Many examples have been studied in both the physics and mathematics literature; for a partial list see \cite{AP,B-H,BFH,DJX,DLY,HLY,JLI, JLII}. Although it is widely believed that $\text{Com}(\cA, \cV)$ will inherit properties of $\cA$ and $\cV$ such as rationality and $C_2$-cofiniteness, no general results of this kind are known.

Many interesting vertex algebras are known or expected to have coset realizations. For example, given a simple, finite-dimensional Lie algebra $\gg$, let $L_k(\gg)$ denote the rational affine vertex algebra of $\gg$ at positive integer level $k$. There is a diagonal map $L_{k}(\gg) \ra L_{k-1}(\gg) \otimes L_1(\gg)$, and a well-known conjecture \cite{BBSSII,FL} asserts that when $\gg$ is simply laced, 
\begin{equation} \label{intro:first} \text{Com}(L_{k}(\gg), L_{k-1}(\gg) \otimes L_1(\gg))
\end{equation} 
coincides with a simple rational $\cW$-algebra of type $\gg$ given by the Drinfeld-Sokolov reduction associated to the principal embedding of $\gs\gl_2$ in $\gg$ \cite{FF,FKW}. The rationality of these $\cW$-algebras was proven by Arakawa in \cite{Ara}, and this conjecture was recently proven in full generality in a joint work with Arakawa \cite{ACL}. Another interesting family is the coset  
\begin{equation} \label{intro:second} \text{Com}(L_k(\gg\gl_n), L_{k-1}(\gs\gl_{n+1}) \otimes \cE(n)).
\end{equation} 
In this notation, $L_k(\gg\gl_n) = \cH(1) \otimes L_k(\gs\gl_n)$ where $\cH(1)$ is the rank one Heisenberg algebra, and $\cE(n)$ is the rank $n$ $bc$-system, which admits an action of $L_1(\gs\gl_n)$. This is conjectured \cite{I} to be a rational super $\cW$-algebra of $\gs\gl(n+1|n)$, which in the case $n=1$ coincides with the $N=2$ superconformal algebra. 

We propose that in order to study such discrete series of cosets in a uniform manner, we should first consider the corresponding cosets of the {\it universal} affine vertex algebra $V^k(\gg)$ of $\gg$ at level $k$. For example, to study the cosets \eqref{intro:first}, one should begin by studying  
\begin{equation} \label{intro:firstgeneric} \text{Com}(V^{k}(\gg), V^{k-1}(\gg) \otimes L_1(\gg)).\end{equation}  It was shown in \cite{ACL} that \eqref{intro:firstgeneric} coincides with the universal $\cW$-algebra of $\gg$ for generic values of $k$. This statement implies that \eqref{intro:first} is isomorphic to a simple $\cW$-algebra of type $\gg$ not only for positive integer values of $k$, but for all admissible values as well. This was originally conjectured by Kac and Wakimoto in \cite{KW3}. Similarly, it is expected that 
\begin{equation}\label{intro:secondgeneric}\text{Com}(V^{k}(\gg\gl_n), V^{k-1}(\gs\gl_{n+1}) \otimes \cE(n)),\end{equation} coincides generically with the universal $\cW$-algebra of $\gs\gl(n+1|n)$.

In general, a strong generating set for the universal coset often descends to a strong generating set for the coset of interest. By a strong generating set for a vertex algebra $\cA$, we mean a subset $S \subseteq \cA$ such that $\cA$ is spanned by the normally ordered monomials in the elements of $S$ and their derivatives. Finding a strong generating set is an important step for studying the representation theory of $\cA$ and establishing properties such as $C_2$-cofiniteness and rationality. A class of examples for which this approach has been fruitful is the {\it parafermion algebras}. Given a simple Lie algebra $\gg$ of rank $d$ and a positive integer $k$, the parafermion algebra $N_k(\gg)$ is the coset $\text{Com}(\cH(d), L_k(\gg))$, where $\cH(d)$ is the rank $d$ Heisenberg algebra corresponding to the Cartan subalgebra of $\gg$. The $C_2$-cofiniteness of $N_k(\gg)$ was recently proven in \cite{ALY} and the rationality was proven in \cite{DR}, using results of \cite{DWI, DWII, DWIII}. A key ingredient in proving these important theorems was the explicit strong generating set for the universal parafermion algebra of $\gs\gl_2$, namely $\text{Com}(\cH(1), V^k(\gs\gl_2))$, which was achieved in \cite{DLY, DLWY}.

Let $\gg$ be a finite-dimensional, reductive Lie algebra (i.e., a direct sum of simple and abelian Lie algebras), equipped with a nondegenerate, symmetric, invariant bilinear form $B$. We denote by $V^k(\gg,B)$ the universal affine vertex algebra of $\gg$ at level $k$, relative to $B$. In this paper, we shall study cosets of $V^k(\gg,B)$ inside a general class of vertex algebras $\cA^k$ whose structure constants depend continuously on $k$. The goal in studying $\text{Com}(V^k(\gg,B), \cA^k)$ is to understand the behavior of $\text{Com}(L_k(\gg,B), \cA_k)$, where $\cA_k$ and $L_k(\gg,B)$ denote the simple quotients of $\cA^k$ and $V^k(\gg,B)$, respectively. In particular, we are interested in special values of $k$ for which $L_k(\gg,B)$ is rational or admissible. The main examples we have in mind are the following. \begin{enumerate}
\item $\cA^k = V^k(\gg',B')$ where $\gg'$ is a finite-dimensional Lie (super)algebra containing $\gg$, and $B'$ is a nondegenerate, invariant (super)symmetric bilinear form on $\gg'$ extending $B$. 
\item $\cA^k = V^{k-l}(\gg',B') \otimes \cF$ where $\cF$ is a free field algebra admitting a homomorphism $V^l(\gg,B)\ra \cF$ for some fixed $l\in \mathbb{C}$ satisfying some mild restrictions. By a free field algebra, we mean any vertex algebra obtained as a tensor product of a Heisenberg algebra $\cH(n)$, a free fermion algebra $\cF(n)$, a $\beta\gamma$-system $\cS(n)$ or a symplectic fermion algebra $\cA(n)$. 
\item $\cA^k = V^{k-l}(\gg',B') \otimes V^l(\gg'',B'')$. Here $\gg''$ is another finite-dimensional Lie (super)algebra containing $\gg$, equipped with a nondegenerate, invariant, (super)symmetric bilinear form $B''$ extending $B$. If $V^l(\gg'',B'')$ is not simple, we may replace $V^l(\gg'',B'')$ with its quotient by any ideal; of particular interest is $L_l(\gg'',B'')$. 
\end{enumerate}
In Section \ref{sect:mainresult}, we will prove in full generality that $\text{Com}(V^k(\gg,B), \cA^k)$ is strongly finitely generated in cases (1) and (2) above for generic values of $k$, and in case (3) when $\cA^k = V^{k-l}(\gg',B') \otimes V^l(\gg'',B'')$, and both $k$ and $l$ are generic. We will also prove this when $\cA^k = V^{k-l}(\gg',B') \otimes L_l(\gg'',B'')$ for certain nongeneric values of $l$ in some interesting examples. 


These are the first general results on the structure of cosets, and our proof is essentially constructive. The key ingredient is a notion of {\it deformable family} of vertex algebras that was introduced by the authors in \cite{CLI}. A deformable family $\cB$ is a vertex algebra defined over a certain ring of rational functions in a formal variable $\kappa$, and $\cB^{\infty} = \lim_{\kappa \ra \infty} \cB$ has a well-defined vertex algebra structure. This notion fits into the framework of {\it vertex rings} introduced by Mason \cite{M}, and it is useful because a strong generating set for $\cB^{\infty}$ gives rise to a strong generating set for $\cB$ with the same cardinality. In the above examples, $\text{Com}(V^k(\gg,B), \cA^k)$ is a quotient of a deformable family $\cC$, and a strong generating set for $\cC$ gives rise to a strong generating set for $\text{Com}(V^k(\gg,B), \cA^k)$ for generic values of $k$. We will show that $$\cC^{\infty} =  \lim_{k\ra \infty}\text{Com}(V^k(\gg,B), \cA^k)  \cong  \cV^G,$$ where $$\cV = \text{Com}\big(\lim_{k\ra \infty} V^k(\gg,B), \lim_{k\ra \infty} \cA^k\big).$$ Here $G$ is a connected Lie group with Lie algebra $\gg$, which acts on $\cV$. Moreover, $\cV$ is a tensor product of free field and affine vertex algebras and $G$ preserves each tensor factor of $\cV$. The description of $\text{Com}(V^k(\gg,B), \cA^k)$ therefore boils down to a description of the orbifold $\cV^G$, which is an easier problem.

Building on our previous work on orbifolds of free field and affine vertex algebras \cite{LI, LII, LIII, LIV, CLII}, we will prove in Sections \ref{sect:freeorbifold} and \ref{sect:affineorbifold} that for any vertex algebra $\cV$ which is a tensor product of free field and affine vertex algebras and any reductive group $G\subseteq \text{Aut}(\cV)$ preserving the tensor factors, $\cV^G$ is strongly finitely generated. The proof depends on a classical theorem of Weyl (Theorem 2.5A of \cite{W}), a result on infinite-dimensional dual reductive pairs (see Section 1 of \cite{KR} as well as related results in \cite{DLM,WaI,WaII}), and the structure and representation theory of the vertex algebras $\cB^{\text{Aut}(\cB)}$ for $\cB = \cH(n), \cF(n), \cS(n), \cA(n)$.

In Section \ref{sect:examples}, we shall apply our general result to find minimal strong finite generating sets for $\text{Com}(V^k(\gg,B), \cA^k)$ in some concrete examples which have been studied previously in the physics literature. In physics language, the tensor product of two copies of $\text{Com}(V^k(\gg,B), \cA^k)$ is the symmetry algebra of a two-dimensional coset conformal field theory of a Wess-Zumino-Novikov-Witten model. Minimal strong generating sets for many examples of coset theories have been suggested in the physics literature; see especially \cite{B-H}, and we provide rigorous proofs of a number of these conjectures.

If $\gg$ is simple and $B$ is the standard normalized Killing form, we denote $V^k(\gg,B)$ by $V^k(\gg)$, and we denote by $h^{\vee}$ the dual Coxeter number of $\gg$. Let $\gg$ be simple, and $\cA^k$ a vertex algebra admitting a homomorphism $V^k(\gg) \ra \cA^k$, as above. Suppose that $k$ is a parameter value for which $\cA^k$ is not simple, and let $\cI$ be the maximal proper ideal of $\cA^k$, so that $\cA_k = \cA^k / \cI$ is simple. Let $\cJ$ denote the kernel of the map $V^k(\gg) \ra \cA_k$, and suppose that $\cJ$ is maximal so that $V^k(\gg) / \cJ \cong L_k(\gg)$. There is always a vertex algebra homomorphism $$\pi_k: \text{Com}(V^k(\gg), \cA^k) \ra \text{Com}(L_k(\gg), \cA_k),$$ which in general need not be surjective. In order to apply our results on $\text{Com}(V^k(\gg), \cA^k)$ to the structure of $\text{Com}(L_k(\gg), \cA_k)$, we need to determine when $\pi_k$ is surjective, since in this case a strong generating set for $\text{Com}(V^k(\gg), \cA^k)$ descends to a strong generating set for $\text{Com}(L_k(\gg), \cA_k)$. In Section \ref{sect:simplecoset}, we shall prove that $\pi_k$ is surjective whenever $k+h^{\vee}$ is a positive real number.


This provides a powerful approach to describing $\text{Com}(L_k(\gg), \cA_k)$, and we will give two examples to illustrate our method. First, we show that for all admissible levels $k$, $\text{Com}(L_k(\gs\gp_2), L_k(\go\gs\gp(1|2)))$ is isomorphic to the rational Virasoro vertex algebra with $c =  -\frac{k (4k+5)}{(k+ 2) (2k+3)}$. Second, we give a new proof of the rationality of the simple $N=2$ superconformal algebra with $c=\frac{3k}{k+2}$ for all positive integers $k$. This algebra is realized as the coset of the Heisenberg algebra inside $L_k(\gs\gl_2)\otimes \cE$, where $\cE$ denotes the rank one $bc$-system. The rationality and regularity of these $N=2$ superconformal algebras were first established by Adamovic \cite{AI,AII}; additionally, the irreducible modules were classified and the fusion rules were computed. In the physics literature, these algebras are known as {\it $N=2$ superconformal unitary minimal models} \cite{DPYZ}. They are famous in string theory because extended algebras of tensor products of $N=2$ superconformal unitary minimal models at total central charge $3d$ are the so-called Gepner models \cite{G} of sigma models on $d$-dimensional compact Calabi-Yau manifolds.

Finally, we mention that our work relates to another current problem in physics. The problem of finding minimal strong generators is presently of interest in the conjectured duality of families of two-dimensional conformal field theories with higher spin gravity on three-dimensional Anti-de-Sitter space. Strong generators of the symmetry algebra of the conformal field theory correspond to higher spin fields, where the conformal dimension becomes the spin. The original higher spin duality \cite{GG} involves cosets $\text{Com}\left(V^k(\gs\gl_n),\cA^k\right)$, where $\cA^k=V^{k-1}(\gs\gl_n) \otimes L_1(\gs\gl_n)$. This case is discussed in Example \ref{ex:sln}. Example \ref{ex:so} is the algebra appearing in the $N=1$ superconformal version of the higher spin duality \cite{CHRI}, and Example \ref{osp} proves a conjecture of that article on the structure of $\text{Com}(V^k(\gs\gp_{2n}), V^{k-1/2}(\go\gs\gp(1|2n) \otimes \cS(n))$. Example \ref{ex:KS} is the symmetry algebra of the $N=2$ supersymmetric Kazama-Suzuki coset theory on complex projective space \cite{KS}. This family of coset theories is the conjectured dual to the full $N=2$ higher spin supergravity \cite{CHRII}. All these examples are important since they illustrate consistency of the higher spin/CFT conjecture on the level of strong generators of the symmetry algebra. Our results have recently been used in this direction in \cite{FG}. The physics picture is actually that a two-parameter family of CFTs corresponds to higher spin gravity where the parameters relate to the 't Hooft coupling of the gravity. This idea is similar to our idea of a deformable family of vertex algebras.

\section{Vertex algebras} \label{section:VOAs}

In this section, we briefly define vertex algebras, which have been discussed from various different points of view in the literature  (see for example \cite{B,FBZ,FLM,FHL,LZ,K}). We will follow the formalism developed in \cite{LZ} and partly in \cite{LiI}. Let $V=V_0\oplus V_1$ be a super vector space over $\mathbb{C}$, and let $z,w$ be formal variables. Let $\text{QO}(V)$ denote the space of linear maps $$V\ra V((z))=\{\sum_{n\in\mathbb{Z}} v(n) z^{-n-1}|
v(n)\in V,\ v(n)=0\ \text{for} \ n>\!\!>0 \}.$$ Each element $a\in \text{QO}(V)$ can be represented as a power series
$$a=a(z)=\sum_{n\in\mathbb{Z}}a(n)z^{-n-1}\in \text{End}(V)[[z,z^{-1}]].$$ We assume that $a=a_0+a_1$ where $a_i:V_j\ra V_{i+j}((z))$ for $i,j\in\mathbb{Z}/2\mathbb{Z}$, and we write $|a_i| = i$.

For each $n \in \mathbb{Z}$, there is a nonassociative bilinear operation $\circ_n$ on $\text{QO}(V)$, defined on homogeneous elements $a$ and $b$ by
$$ a(w)\circ_n b(w)=\text{Res}_z a(z)b(w)\ \iota_{|z|>|w|}(z-w)^n- (-1)^{|a||b|}\text{Res}_z b(w)a(z)\ \iota_{|w|>|z|}(z-w)^n.$$
Here $\iota_{|z|>|w|}f(z,w)\in\mathbb{C}[[z,z^{-1},w,w^{-1}]]$ denotes the power series expansion of a rational function $f$ in the region $|z|>|w|$. For $a,b\in \text{QO}(V)$, we have the following identity of power series known as the {\it operator product expansion} (OPE) formula.
 \begin{equation}\label{opeform} a(z)b(w)=\sum_{n\geq 0}a(w)\circ_n
b(w)\ (z-w)^{-n-1}+:a(z)b(w):. \end{equation}
Here $:a(z)b(w):\ =a(z)_-b(w)\ +\ (-1)^{|a||b|} b(w)a(z)_+$, where $a(z)_-=\sum_{n<0}a(n)z^{-n-1}$ and $a(z)_+=\sum_{n\geq 0}a(n)z^{-n-1}$. Often, \eqref{opeform} is written as
$$a(z)b(w)\sim\sum_{n\geq 0}a(w)\circ_n b(w)\ (z-w)^{-n-1},$$ where $\sim$ means equal modulo the term $:a(z)b(w):$, which is regular at $z=w$. 

Observe that $:a(z)b(z):$ is a well-defined element of $\text{QO}(V)$. It is called the Wick product, or normally ordered product, of $a(z)$ and $b(z)$, and it coincides with $a(z)\circ_{-1}b(z)$. For $n\geq 1$ we have
$$ n!\ a(z)\circ_{-n-1}b(z)=\ :(\partial^n a(z))b(z):,\qquad \partial = \frac{d}{dz}.$$
For $a_1(z),\dots ,a_k(z)\in \text{QO}(V)$, the iterated Wick product is defined inductively by
\begin{equation}\label{iteratedwick} :a_1(z)a_2(z)\cdots a_k(z):\ =\ :a_1(z)b(z):,\qquad b(z)=\ :a_2(z)\cdots a_k(z):.\end{equation}
We often omit the formal variable $z$ when no confusion can arise.

A subspace $\cA\subseteq \text{QO}(V)$ containing $1$ which is closed under all products $\circ_n$ will be called a {\it quantum operator algebra} (QOA). We say that $a,b\in \text{QO}(V)$ are {\it local} if $$(z-w)^N [a(z),b(w)]=0$$ for some $N\geq 0$. Here $[,]$ denotes the super bracket. This condition implies that $a\circ_n b = 0$ for $n\geq N$, so (\ref{opeform}) becomes a finite sum. Finally, a {\it vertex algebra} will be a QOA whose elements are pairwise local. This notion is well known to be equivalent to the notion of a vertex algebra in the sense of \cite{FLM,FHL}, except that a conformal structure is not assumed to exist. Briefly, a vertex algebra is a vector space $V$ with a distinguished vector called the vacuum vector, and an injective linear map $Y:V\ra  \text{QO}(V)$ satisfying some axioms, the most important of which is that $Y(V)$ is a QOA whose element are pairwise local. Conversely, if $\cA$ is a QOA whose elements are pairwise local, then $\cA$ is itself a faithful $\cA$-module via the {\it left regular map}
\begin{equation} \label{leftregular} \rho:\cA\rightarrow \text{QO}(\cA),\qquad a\mapsto \hat{a}, \qquad \hat{a}(\zeta)b=\sum_{n\in\mathbb{Z}} (a\circ_n b)~\zeta^{-n-1}.\end{equation} It
can be shown (see \cite{LiI} and \cite{LL}) that $\rho$ is an injective QOA
homomorphism, and that taking $Y = \rho$, $\cA$ is then a vertex algebra with vacuum vector $1$.


A vertex algebra $\cA$ is said to be {\it generated} by a subset $S=\{\alpha^i|\ i\in I\}$ if $\cA$ is spanned by words in the letters $\alpha^i$ and all products $\circ_n$, for $i\in I$ and $n\in\mathbb{Z}$. We say that $S$ {\it strongly generates} $\cA$ if $\cA$ is spanned by words in the letters $\alpha^i$ and products $\circ_n$, for $n<0$. Equivalently, $\cA$ is spanned by the set of normally ordered monomials 
$$\{ :\partial^{k_1} \alpha^{i_1}\cdots \partial^{k_m} \alpha^{i_m}:| \ i_1,\dots,i_m \in I,\ k_1,\dots,k_m \geq 0\}.$$ 
Suppose that $S$ is an ordered strong generating set $\{\alpha^1, \alpha^2,\dots\}$ for $\cA$. We say that $S$ {\it freely generates} $\cA$, if $\cA$ has a PBW basis consisting of 
\begin{equation} \label{freegen} \begin{split} :\partial^{k^1_1} \alpha^{i_1} \cdots \partial^{k^1_{r_1}}\alpha^{i_1} \partial^{k^2_1} \alpha^{i_2} \cdots \partial^{k^2_{r_2}}\alpha^{i_2}
 \cdots \partial^{k^n_1} \alpha^{i_n} \cdots \partial^{k^n_{r_n}} \alpha^{i_n}:,\qquad 
 1\leq i_1 < \dots < i_n,\\ k^1_1\geq k^1_2\geq \cdots \geq k^1_{r_1},\quad k^2_1\geq k^2_2\geq \cdots \geq k^2_{r_2},  \ \ \cdots,\ \  k^n_1\geq k^n_2\geq \cdots \geq k^n_{r_n}. \end{split} \end{equation} There is a similar notion of free generation for a vertex superalgebra, where if $\alpha^i$ is an odd generator, $\partial^k \alpha^i$ can only appear once in such a monomial.

A {\it conformal structure} of central charge $c$ on a vertex algebra $\cA$ is a Virasoro vector $L(z) = \sum_{n\in \mathbb{Z}} L_n z^{-n-2} \in \cA$ satisfying
\begin{equation} \label{virope} L(z) L(w) \sim \frac{c}{2}(z-w)^{-4} + 2 L(w)(z-w)^{-2} + \partial L(w)(z-w)^{-1},\end{equation} such that in addition, $L_{-1} \alpha = \partial \alpha$ for all $\alpha \in \cA$, and $L_0$ acts diagonalizably on $\cA$. We say that $a$ has conformal weight $d$ if $L_0(\alpha) = d \alpha$, and we denote the conformal weight $d$ subspace by $\cA[d]$. In all our examples, the conformal weight grading is by $\mathbb{Z}_{\geq 0}$, that is, 
$$\cA = \bigoplus_{d\geq 0} \cA[d],$$ and each $\cA[d]$ is finite-dimensional. We then have the graded character 
\begin{equation} \label{gradedchar:va} \chi(\cA, q) = \sum_{d\geq 0} \text{dim}(\cA[d]) q^d.\end{equation}
As a matter of notation, we say that a vertex algebra $\cA$ is of type $\cW(d_1,d_2,\dots)$ if it has a minimal strong generating set consisting of one field in each weight $d_1,d_2,\dots$.

\subsection{Affine vertex algebras} Let $\gg$ be a finite-dimensional, Lie (super)algebra, equipped with a (super)symmetric, 
invariant bilinear form $B$. The {\it universal affine vertex (super)algebra} $V^k(\gg,B)$ associated to $\gg$ and $B$ is freely generated by elements $X^{\xi}$, $\xi \in\gg$, satisfying the operator product expansions
$$X^{\xi}(z)X^{\eta} (w)\sim kB(\xi,\eta) (z-w)^{-2} + X^{[\xi,\eta]}(w) (z-w)^{-1} .$$ The automorphism group $\text{Aut}(V^k(\gg,B))$ is the same as the group of automorphisms of $\gg$ which preserve $B$; each automorphism acts linearly on the generators $X^{\xi}$. If $B$ is the standardly normalized supertrace in the adjoint representation of $\gg$, and $B$ is nondegenerate, we denote $V^k(\gg,B)$ by $V^k(\gg)$. We recall the \emph{Sugawara construction}, following \cite{KRW}. If $\gg$ is simple and $B$ is nondegenerate, we may choose dual bases $\{\xi\}$ and $\{\xi'\}$ of $\gg$, satisfying $B(\xi',\eta)=\delta_{\xi,\eta}$. The Casimir operator is $C_2=\sum_{\xi}\xi\xi'$, and the dual Coxeter number $h^\vee$ with respect to $B$ is one-half the eigenvalue of $C_2$ in the adjoint representation of $\gg$. If $k+h^\vee\neq 0$, there is a Virasoro element
\begin{equation} \label{sugawara}
L^{\gg} = \frac{1}{2(k+h^\vee)}\sum_\xi :X^{\xi}X^{\xi'}:
\end{equation} of central charge $c= \frac{k \cdot \text{sdim}(\gg)}{k+h^\vee}$. This is known as the {\it Sugawara conformal vector}, and each $X^{\xi}$ is primary of weight one. We denote by $L_k(\gg,B)$ the unique simple quotient of $V^k(\gg,B)$ by its maximal proper ideal graded by conformal weight, and we denote by $L_k(\gg)$ the simple graded quotient of $V^k(\gg)$.

\subsection{Free field algebras} The {\it Heisenberg algebra} $\cH(n)$ has even generators $\alpha^{i}$, $i=1,\dots, n$, satisfying
\begin{equation} \alpha^{i} (z) \alpha^{j}(w) \sim \delta_{i,j} (z-w)^{-2}.\end{equation} It has the Virasoro element $L^{\cH} =  \frac{1}{2} \sum_{i=1}^n  :\alpha^i \alpha^i:$ of central charge $n$, under which $\alpha^i$ is primary of weight one. The automorphism group $\text{Aut}(\cH(n))$ is isomorphic to the orthogonal group $\text{O}(n)$ and acts linearly on the generators.

The {\it free fermion algebra} $\cF(n)$ has odd generators $\phi_i$, $i=1,\dots, n$, satisfying
\begin{equation} \phi^{i} (z) \phi^{j}(w) \sim \delta_{i,j} (z-w)^{-1}.\end{equation} It has the Virasoro element $L^{\cF} =  -\frac{1}{2} \sum_{i=1}^n  :\phi^i \partial \phi^i:$ of central charge $\frac{n}{2}$, under which $\phi^i$ is primary of weight $\frac{1}{2}$. We have $\text{Aut}(\cF(n))\cong \text{O}(n)$, and it acts linearly on the generators. Note that $\cF(2n)$ is isomorphic to the $bc$-system $\cE(n)$, which has odd generators $b^i, c^i$, $i=1,\dots, n$, satisfying \begin{equation} \begin{split}
b^i(z) c^{j}(w) &\sim \delta_{i,j} (z-w)^{-1},\qquad c^{i}(z) b^j(w)\sim \delta_{i,j} (z-w)^{-1},\\ 
b^i(z) b^j(w) &\sim 0,\qquad\qquad\qquad c^i(z)c^j (w)\sim 0. \end{split} \end{equation}

The {\it $\beta\gamma$-system} $\cS(n)$ has even generators $\beta^{i}$, $\gamma^{i}$, $i=1,\dots, n$, satisfying \begin{equation} \begin{split}
\beta^i(z)\gamma^{j}(w) &\sim \delta_{i,j} (z-w)^{-1},\qquad \gamma^{i}(z)\beta^j(w)\sim -\delta_{i,j} (z-w)^{-1},\\ 
\beta^i(z)\beta^j(w) &\sim 0,\qquad\qquad\qquad \gamma^i(z)\gamma^j (w)\sim 0.\end{split} \end{equation}
It has the Virasoro element $L^{\cS} = \frac{1}{2} \sum_{i=1}^n \big(:\beta^{i}\partial\gamma^{i}: - :\partial\beta^{i}\gamma^{i}:\big)$ of central charge $-n$, under which $\beta^{i}$, $\gamma^{i}$ are primary of weight $\frac{1}{2}$. The automorphism group $\text{Aut}(\cS(n))$ is isomorphic to the symplectic group $\text{Sp}(2n)$ and acts linearly on the generators.

The {\it symplectic fermion algebra} $\cA(n)$ has odd generators $e^{i}, f^{i}$, $i =1,\dots, n$, satisfying
 \begin{equation} \begin{split}
e^{i} (z) f^{j}(w)&\sim \delta_{i,j} (z-w)^{-2},\qquad f^{j}(z) e^{i}(w)\sim - \delta_{i,j} (z-w)^{-2},\\
e^{i} (z) e^{j} (w)&\sim 0,\qquad\qquad\qquad\ \ \ \ \ f^{i} (z) f^{j} (w)\sim 0.
\end{split} \end{equation} It has the Virasoro element $L^{\cA} =  - \sum_{i=1}^n  :e^i f^i:$ of central charge $-2n$, under which $e^i, f^i$ are primary of weight one. We have $\text{Aut}(\cA(n))\cong \text{Sp}(2n)$, and it acts linearly on the generators.

\subsection{Filtrations}
A filtration $\cA_{(0)}\subseteq\cA_{(1)}\subseteq\cA_{(2)}\subseteq \cdots$ on a vertex algebra $\cA$ such that $\cA = \bigcup_{k\geq 0} \cA_{(k)}$ is a called a {\it good increasing filtration} \cite{LiII} if for all $a\in \cA_{(k)}$, $b\in\cA_{(l)}$, we have \begin{equation} \label{goodi} a\circ_n b\in  \bigg\{\begin{matrix}\cA_{(k+l)} & n<0 \\ \cA_{(k+l-1)} & 
n\geq 0 \end{matrix}\ . \end{equation}
Setting $\cA_{(-1)} = \{0\}$, the associated graded object $\text{gr}(\cA) = \bigoplus_{k\geq 0}\cA_{(k)}/\cA_{(k-1)}$ is a $\mathbb{Z}_{\geq 0}$-graded associative, (super)commutative algebra with a
unit $1$ under a product induced by the Wick product on $\cA$. Moreover, $\text{gr}(\cA)$ has a derivation $\partial$ of degree zero, and we call such a ring a $\partial$-ring. For each $r\geq 1$ we have the projection \begin{equation} \phi_r: \cA_{(r)} \ra \cA_{(r)}/\cA_{(r-1)}\subseteq \text{gr}(\cA).\end{equation} The key feature of such filtrations is the following reconstruction property \cite{LL}. Let $\{a_i|~i\in I\}$ be a set of generators for $\text{gr}(\cA)$ as a $\partial$-ring, where $a_i$ is homogeneous of degree $d_i$. In other words, $\{\partial^k a_i|\ i\in I,\ k\geq 0\}$ generates $\text{gr}(\cA)$ as a ring. If $a_i(z)\in\cA_{(d_i)}$ satisfies $\phi_{d_i}(a_i(z)) = a_i$ for each $i$, then $\cA$ is strongly generated as a vertex algebra by $\{a_i(z)|~i\in I\}$.

For any Lie superalgebra $\gg = \gg_0 \oplus \gg_1$ and bilinear form $B$, $V^k(\gg,B)$ admits a good increasing filtration \begin{equation} \label{filto} V^k(\gg,B)_{(0)}\subseteq V^k(\gg,B)_{(1)}\subseteq \cdots,\qquad V^k(\gg,B) = \bigcup_{j\geq 0} V^k(\gg,B)_{(j)},\end{equation} where $V^k(\gg,B)_{(j)}$ is spanned by all iterated Wick products $:\partial^{k_1} X^{\xi_1}\cdots \partial^{k_r} X^{\xi_r}:$ of length $r\leq j$. We have a linear isomorphism $V^k(\gg,B) \cong \text{gr}(V^k(\gg,B))$, and an isomorphism of graded $\partial$-rings \begin{equation}\label{assgrad} \text{gr}(V^k(\gg,B)) \cong \big( \text{Sym}\bigoplus_{j\geq 0} V_j\big) \bigotimes \big( \bigwedge_{j\geq 0} \bigoplus W_j\big), \qquad V_j\cong \gg_0, \qquad W_j \cong \gg_1. \end{equation} In this notation, $V_j$ is spanned by the images of $\{\partial^j X^{\xi_i}|\ \xi_i \in \gg_0\}$ in $\text{gr}(V^k(\gg,B))$, and $W_j$ is spanned by the images of $\{\partial^j X^{\xi_i}|\ \xi_i \in \gg_1\}$. Using the notation $X^{\xi_i}_j$ for these variables, the $\partial$-ring structure on $\big( \text{Sym}\bigoplus_{j\geq 0} V_j\big) \bigotimes \big( \bigwedge_{j\geq 0} \bigoplus W_j\big)$ is given by $\partial X^{\xi_i}_j = X^{\xi_i}_{j+1}$ for $\xi_i$ in either $\gg_0$ or $\gg_1$. Clearly the weight grading on $V^k(\gg,B)$ is inherited by $\text{gr}(V^k(\gg,B))$.

For $\cV = \cH(n), \cF(n), \cS(n), \cA(n)$ we have good increasing filtrations $\cV_{(0)} \subseteq \cV_{(1)} \subseteq \cdots$, where $\cV_{(j)}$ is spanned by iterated Wick products of the generators and their derivatives of length at most $j$. We have linear isomorphisms $$\cH(n) \cong \text{gr}(\cH(n))\qquad \cF(n) \cong \text{gr}(\cF(n)),\qquad \cS(n) \cong \text{gr}(\cS(n)),\qquad \cA(n) \cong \text{gr}(\cA(n)),$$ and isomorphism of $\partial$-rings 
\begin{equation}\label{assgradfreefield}\begin{split}
&\text{gr}(\cH(n)) \cong  \text{Sym}\bigoplus_{j\geq 0} V_j ,\qquad  \text{gr}(\cF(n)) \cong \bigwedge \bigoplus_{j\geq 0} V_j \\
&\text{gr}(\cS(n)) \cong  \text{Sym}\bigoplus_{j\geq 0} (V_j \oplus V^*_j) ,\qquad   \text{gr}(\cA(n))\cong \bigwedge \bigoplus_{j\geq 0} ( V_j \oplus V^*_j), \end{split} \end{equation} where $V_j \cong \mathbb{C}^n$ and $V^*_j \cong (\mathbb{C}^n)^*$ for all $j\geq 0$. If we choose a basis $\{x^i_j|\ i = 1,\dots n\}$ for $V_j$ and a basis $\{y^i_j|\ i = 1,\dots, n\}$ for $V^*_j$, the $\partial$-ring structure is given by $\partial x^i_j = x^i_{j+1}$ and $\partial y^i_j = y^i_{j+1}$, and $\text{gr}(\cV)$ inherits the weight grading on $\cV$.

Finally, for all the vertex algebras $\cV = V^k(\gg,B),  \cH(n), \cF(n), \cS(n), \cA(n)$ these filtrations are $\text{Aut}(\cV)$-invariant. For any reductive group $G \subseteq \text{Aut}(\cV)$, we have linear isomorphisms $\cV^G \cong \text{gr}(\cV^G)$ and isomorphisms of $\partial$-rings $\text{gr}(\cV)^G \cong \text{gr}(\cV^G)$.

\section{Vertex algebras over commutative rings} \label{section:voaring}
Let $R$ be a unital, commutative ring. A vertex algebra over $R$ will be an $R$-module $\cA$ with a vertex algebra structure, which we shall define as in Section \ref{section:VOAs}. A general treatment of vertex algebras over commutative rings has recently been given by Mason \cite{M}. Much of the theory is similar to vertex algebras over fields, but there are some important differences. For example, the translation operator does not work when the ring does not have enough denominators, and must be replaced with a Hasse-Schmidt derivation. These difficulties are not present when $R$ is a $\mathbb{C}$-algebra, which is the case in all our examples. 

First, given an $R$-module $M$, we define $\text{QO}_R(M)$ to be the set of $R$-module homomorphisms $a: M \ra M((z))$, which can be represented by power series $$a(z) = \sum_{n\in \mathbb{Z}} a(n) z^{-n-1} \in \text{End}_R(M)[[z,z^{-1}]].$$ Here $a(n) \in \text{End}_R(M)$ is an $R$-module endomorphism, and for each $v\in M$, $a(n) v = 0$ for $n>\!\!>0$. Clearly $\text{QO}_R(M)$ is an $R$-module, and we define the products $a\circ_n b$ as before, which are $R$-module homomorphisms from $\text{QO}_R(M) \otimes_R \text{QO}_R(M) \ra \text{QO}_R(M)$. A QOA will be an $R$-module $\cA \subseteq \text{QO}_R(M)$ containing $1$ and closed under all products. Locality is defined as usual, and a vertex algebra over $R$ is a QOA $\cA\subseteq \text{QO}_R(M)$ whose elements are pairwise local. The OPE formula \eqref{opeform} still holds, and there is a faithful representation $\cA \ra \text{QO}_R(\cA)$ given by \eqref{leftregular}.

We say that a subset $S = \{\alpha^i|\ \ i\in I\} \subseteq \cA$ generates $\cA$ if $\cA$ is spanned as an $R$-module by all words in $\alpha^i$ and the above products. We say that $S$ strongly generates $\cA$ if $\cA$ is spanned as an $R$-module by all iterated Wick products of these generators and their derivatives. In general, $\cA$ need not be a free $R$-module, but all the examples we need in this paper are free modules. If $S = \{\alpha^1, \alpha^2,\dots\}$ is an ordered strong generating set for $\cA$, we say that $S$ freely generates $\cA$, if $\cA$ has an $R$-basis consisting of all normally ordered monomials of the form \eqref{freegen}. In particular, $\cA$ is a free $R$-module.

Let $\cA$ be a vertex algebra over $R$ and let $c\in R$. Suppose that $\cA$ contains a field $L$ satisfying \eqref{virope}, such that $L_0$ acts on $\cA$ by $\partial$ and $L_1$ acts diagonalizably, and we have an $R$-module decomposition $$\cA = \bigoplus_{d\in R} \cA[d].$$ Here $\cA[d]$ is the $L_0$-eigenspace with eigenvalue $d$. If each $\cA[d]$ is a free $R$-module of finite rank, we have the graded character
\begin{equation}  \label{gradedchar:ring} \chi(\cA ,q) = \sum_{d \in R} \text{rank}_R(\cA[d]) q^d.\end{equation}
In all our examples, the grading will be by $\mathbb{Z}_{\geq 0}$, regarded as a semigroup inside $R$, and $\cA[0] \cong R$. A typical example is $V^k(\gg)$ where $k$ is regarded as a formal variable, so $V^k(\gg)$ is a vertex algebra over the polynomial ring $\mathbb{C}[k]$. As such, it has no conformal vector. If instead we define $R$ to be the localization $D^{-1} \mathbb{C}[k]$ where $D$ is the multiplicative set generated by $(k + h^{\vee})$, then $V^k(\gg)$ has Virasoro vector $L^{\gg}$ given by \eqref{sugawara}.

Let $\cA$ be a vertex algebra over $R$ with a weight grading as above. Given an ideal $I\subseteq R \cong \cA[0]$, let $I \cdot \cA$ denote the set of finite sums of the form $\sum_i f_i a_i$ where $f_i \in I$ and $a_i \in \cA$. Clearly $I \cdot \cA$ is the vertex algebra ideal generated by $I$, and we may consider the quotient
$$\cA / (I \cdot \cA).$$ It is a vertex algebra over $R/I$ with a weight grading such that $(\cA / (I \cdot \cA))[0]\cong R/I$. In general, if $\cA$ is a free $R$-module, $\cA / (I \cdot \cA)$ need not be a free $R/I$-module. However, in all our examples, $R/I$ will be a field so that $\cA / (I \cdot \cA)$ is indeed free.

\subsection{Deformable families} \label{deformation}
The notion of a deformable family of vertex algebras is a special case of a vertex algebra defined over a commutative ring, and was introduced in \cite{CLI}. Let $K \subseteq \mathbb{C}$ be a subset which is at most countable, and let $F_K$ denote the $\mathbb{C}$-algebra of rational functions in a formal variable $\kappa$ of the form $\frac{p(\kappa)}{q(\kappa)}$ where $\text{deg}(p) \leq \text{deg}(q)$ and the roots of $q$ lie in $K$. A deformable family} will be a free $F_K$-module $\cB$ with the structure of a vertex algebra over $F_K$.

We assume that $\cB$ possesses a $\mathbb{Z}_{\geq 0}$-grading 
\begin{equation} \label{gradingonb} \cB = \bigoplus_{m\geq 0} \cB[m]\end{equation} by weight, where each $\cB[m]$ is free $F_K$-module of finite rank, and $\cB[0] \cong F_K$. Typically, this grading comes from a conformal structure on $\cB$, but this need not be the case. We only required that for $a\in \cB[m]$, the operator $a\circ_n$ is homogeneous of weight $m-n-1$.

For $k\in \mathbb{C} \setminus K$, the ideal $(\kappa - k) \subseteq F_K$ is maximal and $F_K / (\kappa - k) \cong \mathbb{C}$. Let $(\kappa - k) \cB$ denote the set of finite sums of the form $\sum_i f_i b_i$, where $b_i \in \cB$ and $f_i \in (\kappa - k)$. Clearly $(\kappa - k) \cB$ is a vertex algebra ideal of $\cB$. By abuse of notation, we often denote $(\kappa - k) \cB$ by $(\kappa - k)$ when no confusion can arise. The quotient 
$$\cB^k = \cB / (\kappa - k)$$ is then an ordinary vertex algebra over $\mathbb{C}$. Evidently, $\cB$ and $\cB^k$ have the same graded character in the sense that 
$$\chi(\cB ,q) = \sum_{d \geq 0} \text{rank}_{F_K}(\cB[d]) q^d = \sum_{d \geq 0} \text{dim}_{\mathbb{C}}(\cB^k[d]) q^d = \chi(\cB^k ,q).$$
We regard $\cB^k$ as being obtained from $\cB$ by evaluating the functions in $F_K$ at the point $k$.

Since $F_K$ consists of rational functions of degree at most zero, $\cB$ has a limit 
$$\cB^{\infty} = \lim_{\kappa\ra \infty} \cB,$$ which we define as follows. First, fix a basis $\{a_i|\ i \in I\}$ of $\cB$ as an $F_K$-module, and define $\cB^{\infty}$ to be the vector space over $\mathbb{C}$ with basis $\{\alpha_i|\ i \in I\}$. Without loss of generality, we may assume that each $a_i$ is homogeneous with respect to weight. If $a_i \in \cB[m]$, we declare that $\alpha_i \in \cB^{\infty}[m]$, so that $\cB^{\infty}$ inherits the weight grading $\cB^{\infty} = \bigoplus_{m\geq 0} \cB^{\infty}[m]$. Next, we define a map $$\phi: \cB \ra \cB^{\infty},\qquad \phi(\sum_{i\in I} f_i a_i) = \sum_{i\in I} (\lim_{\kappa \ra \infty} f_i) \alpha_i.$$ Here $\sum_{i\in I} f_i a_i$ is an arbitrary element of $\cB$, where the coefficients $f_i$ are elements of $F_K$ and $f_i = 0$ for all but finitely many indices. This map is clearly linear in the sense that $$\phi( f \omega +g \nu) = (\lim_{\kappa \ra \infty} f) \phi(\omega) + (\lim_{\kappa \ra \infty} g) \phi(\nu).$$ The vertex algebra structure on $\cB^{\infty}$ is defined on our basis by 
\begin{equation} \label{phimorphism} \alpha_i \circ_n \alpha_j = \phi( a_i \circ_n a_j), \qquad i,j\in I, \qquad n\in \mathbb{Z}, \end{equation} and extended by linearity, so for all $\omega,\nu \in \cB$ and $n \in \mathbb{Z}$, we have 
 \begin{equation} \label{preservecircle} \phi(\omega \circ_n \nu) = \phi(\omega) \circ_n \phi(\nu).\end{equation}
Clearly $\cB^{\infty}$ is a vertex algebra over $\mathbb{C}$ with graded character $\chi(\cB^{\infty} ,q) = \chi(\cB ,q)$. Since each weight space $\cB[m]$ has finite rank and the determinant of a change of basis matrix lies in $F_K$, the vertex algebra structure of $\cB^{\infty}$ does not depend on the choice of basis of $\cB$. We can regard $\cB^{\infty}$ as obtained from $\cB$ by evaluating all elements of $F_K$ at $\infty$, so it is on the same footing as $\cB^k$ defined above.

\begin{example}[Affine vertex superalgebras] \label{avsa}
Let $\gg = \gg_0 \oplus \gg_1$ be a finite-dimensional Lie superalgebra over $\mathbb{C}$, where $\text{dim}(\gg_0) = n$ and $\text{dim}(\gg_1) = 2m$. Suppose that $\gg$ is equipped with a nondegenerate, invariant, supersymmetric bilinear form $B$. Fix a basis $\{\xi_1,\dots, \xi_n\}$ for $\gg_0$ and $\{\eta^{\pm}_1,\dots, \eta^{\pm}_m\}$ for $\gg_1$, so the generators $X^{\xi_i}, X^{\eta^{\pm}_j}$ of $V^k(\gg,B)$ satisfy \begin{equation} \label{opegood} \begin{split} &X^{\xi_i}(z) X^{\xi_j}(w) \sim \delta_{i,j} k (z-w)^{-2} + X^{[\xi_i, \xi_j]}(w) (z-w)^{-1}, \\
&X^{\eta^{+}_i}(z) X^{\eta^{-}_j}(w) \sim \delta_{i,j} k (z-w)^{-2} + X^{[\eta^{+}_i, \eta^{-}_j]}(w) (z-w)^{-1},\\
&X^{\xi_i}(z) X^{\eta^{\pm}_j}(w) \sim X^{[\xi_i, \eta^{\pm}_j]}(w) (z-w)^{-1},\\ 
& X^{\eta^{\pm}_i}(z) X^{\eta^{\pm}_j}(w) \sim X^{[\eta^{\pm}_i, \eta^{\pm}_j]}(w) (z-w)^{-1}.\\
\end{split}
\end{equation}

Let $\kappa$ be a formal variable satisfying $\kappa^2 = k$, and let $F = F_K$ for $K =  \{0\}$. Let $\cV$ be the vertex algebra with coefficients in $F$ which is freely generated by 
$\{a^{\xi_i}, a^{\eta^{\pm}_j} |\ i=1,\dots, n,\ j=1,\dots, m\}$, satisfying
\begin{equation}\label{opegoodnew} \begin{split} &a^{\xi_i}(z) a^{\xi_j}(w) \sim \delta_{i,j} (z-w)^{-2} + \frac{1}{\kappa}a^{[\xi_i, \xi_j]}(w) (z-w)^{-1}, \\
&a^{\eta^{+}_i}(z) a^{\eta^{-}_j}(w) \sim \delta_{i,j} (z-w)^{-2} + \frac{1}{\kappa}a^{[\eta^{+}_i, \eta^{-}_j]}(w) (z-w)^{-1},\\
&a^{\xi_i}(z) a^{\eta^{\pm}_j}(w) \sim + \frac{1}{\kappa}a^{[\xi_i, \eta^{\pm}_j]}(w) (z-w)^{-1},\\ 
& a^{\eta^{\pm}_i}(z) a^{\eta^{\pm}_j}(w) \sim + \frac{1}{\kappa}a^{[\eta^{\pm}_i, \eta^{\pm}_j]}(w) (z-w)^{-1}.\\
\end{split}
\end{equation}

For $k\neq 0$, we have a surjective vertex algebra homomorphism $$\cV \ra V^k(\gg,B),\qquad a^{\xi_i} \mapsto \frac{1}{\sqrt{k}} X^{\xi_i},\qquad a^{\eta^{\pm}_j} \mapsto \frac{1}{\sqrt{k}} a^{\eta^{\pm}_j},$$ whose kernel is the ideal $(\kappa - \sqrt{k})$, so $V^k(\gg,B) \cong \cV/ (\kappa - \sqrt{k})$. Then \begin{equation} \cV^{\infty} = \lim_{\kappa \ra\infty} \cV\cong \cH(n)\otimes \cA(m),\end{equation} and has even generators $\alpha^{\xi_i}$ for $i=1,\dots, n$, and odd generators $e^{\eta^+_j}, e^{\eta^-_j}$ for $j=1,\dots, m$, satisfying \begin{equation} 
\label{opelimit} \begin{split} \alpha^{\xi_i}(z) \alpha^{\xi_j}(w) \sim \delta_{i,j} (z-w)^{-2},\\ e^{\eta^+_i}(z) e^{\eta^{-}_j}(w) \sim \delta_{i,j} (z-w)^{-2}.\end{split} \end{equation}\end{example}

An important feature of deformable families is that a strong generating set for the limit will give rise to a strong generating sets for the family after tensoring with a ring of the form $F_S$ for a subset $S\subseteq \mathbb{C}$ containing $K$. The following result appears as Lemma 8.1 of \cite{CLI}, but we include a more detailed proof here for the benefit of the reader.

\begin{lemma} \label{passage} Let $K\subseteq \mathbb{C}$ be at most countable, and let $\cB$ be a vertex algebra over $F_K$ with weight grading \eqref{gradingonb} such that $\cB[0] \cong F_K$. Let $U = \{\alpha_i|\ i\in I\}$ be a strong generating set for $\cB^{\infty}$, and let $T = \{a_i|\ i\in I\}$ be a subset of $\cB$ such that $\phi(a_i) = \alpha_i$. There exists a subset $S\subseteq \mathbb{C}$ containing $K$ which is at most countable, such that $F_S \otimes_{F_K}\cB$ is strongly generated by $T$. Here we have identified $T$ with the set $\{1 \otimes a_i|\ i\in I\} \subseteq F_S \otimes_{F_K} \cB$. \end{lemma}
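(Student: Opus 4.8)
The plan is to show that the failure of $T$ to strongly generate $\cB$ over $F_K$ is controlled, weight by weight, by a countable collection of "bad" parameter values, and then enlarge $K$ to $S$ by adjoining all of them. First I would fix a conformal weight $m$ and work inside $\cB[m]$, which is a free $F_K$-module of finite rank. Let $\cB^T[m] \subseteq \cB[m]$ denote the $F_K$-submodule spanned by all normally ordered monomials in the $a_i$ and their derivatives (with coefficients in $F_K$) that have weight $m$; this is a finitely generated $F_K$-module, being a submodule of a finite-rank free module over the (Noetherian, being a localization of a polynomial ring) ring $F_K$. The content of the statement "$T$ strongly generates $\cB$" is exactly that $\cB^T[m] = \cB[m]$ for every $m$. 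So the obstruction to strong generation in weight $m$ is the quotient module $Q_m = \cB[m]/\cB^T[m]$, and I must show that after base change to a suitable larger ring $F_S$, all the $Q_m$ vanish.

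Next I would use the hypothesis that $U = \{\alpha_i\}$ strongly generates $\cB_\infty$. Fix a basis $\{a_\lambda\}$ of $\cB[m]$ over $F_K$; its image $\{\alpha_\lambda = \phi(a_\lambda)\}$ is a basis of $\cB_\infty[m]$, and since $U$ strongly generates $\cB_\infty$, each $\alpha_\lambda$ is a $\mathbb{C}$-linear combination of weight-$m$ normally ordered monomials $m_j(\alpha_i)$ in the generators. By the structure of the limit described in Section~\ref{deformation} — every normally ordered polynomial relation, and indeed every element, in $\cB_\infty$ is a $\kappa\ra\infty$ limit of a corresponding expression over $F_K$ — I can lift this: there are elements $\tilde{a}_\lambda \in \cB^T[m]$ with $\phi(\tilde{a}_\lambda) = \alpha_\lambda$, obtained by replacing each $\alpha_i$ by $a_i$ in the monomials and taking the (convergent, as $\kappa\ra\infty$) $F_K$-linear combination with the same leading coefficients. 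Then the change-of-basis matrix $M(\kappa)$ expressing $(\tilde{a}_\lambda)$ in terms of the basis $(a_\lambda)$ has entries in $F_K$ and satisfies $\lim_{\kappa\ra\infty} M(\kappa) = I$; in particular $\det M(\kappa)$ is a nonzero element of $F_K$, hence a ratio of polynomials in $\kappa$ with only finitely many zeros. Over the localized ring where $\det M$ is inverted, $(\tilde a_\lambda)$ is again a basis, so $\cB^T[m]$ becomes all of $\cB[m]$ after that localization.

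Finally I would assemble the global statement. For each $m$, let $Z_m \subset \mathbb{C}$ be the (finite) set of zeros of the chosen $\det M_m(\kappa)$, together with any new poles introduced, all of which lie off $K$; set $S = K \cup \bigcup_{m\geq 0} Z_m$, a countable union of finite sets, hence at most countable. Passing from $F_K$ to $F_S = F_{S}$ inverts $\det M_m$ for every $m$ simultaneously, so $F_S \otimes_{F_K} \cB[m]$ is spanned over $F_S$ by weight-$m$ normally ordered monomials in $T$ for every $m$, which is precisely the statement that $F_S \otimes_{F_K}\cB$ is strongly generated by $T$. The main obstacle — and the point that needs care rather than just bookkeeping — is the lifting step: making precise that strong generation of $\cB_\infty$ genuinely produces, for each basis vector in each fixed weight, an $F_K$-linear normally ordered expression in $T$ whose $\kappa\ra\infty$ limit is the given limit vector. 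This is where one leans on the explicit description in Section~\ref{deformation} that every relation and every element of $\cB_\infty$ arises as a termwise limit of expressions over $F_K$; once that is in hand, finiteness of the rank of each $\cB[m]$ confines all the damage to finitely many new parameter values per weight, and countability does the rest.
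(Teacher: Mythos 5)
Your argument is correct and is essentially the proof given in [CLI, Lemma~8.1]: in each weight one lifts a monomial spanning set of $\cB_\infty[m]$ to elements of the $F_K$-span of monomials in $T$, observes that the resulting transition matrix tends to the identity as $\kappa\ra\infty$ so its determinant is a nonzero element of $F_K$ with finitely many zeros, and adjoins those zeros to $K$, taking a countable union over all weights. The only cosmetic difference is that you package the lift as combinations $\tilde a_\lambda$ rather than selecting a monomial basis directly, which changes nothing.
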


\begin{proof} For all $n >0$, let $d_n = \text{rank}_{F_K}(\cB[n]) = \text{dim}_{\mathbb{C}} \cB^{\infty}[n]$, and fix a basis $\{b_1,\dots, b_{d_n}\}$ for $\cB[n]$ as an $F_K$-module, so that the corresponding set $\{\beta_1,\dots, \beta_{d_n}\}$, where $\beta_j = \phi(b_j)$, forms a basis of $\cB^{\infty}[n]$. 

Since $U = \{\alpha_i|\ i\in I\}$ strongly generates $\cB^{\infty}$, there is a subset of monomials $\{\mu_1,\dots, \mu_{d_n}\}$ which each have the form $:\partial^{k_1} \alpha_{i_1} \cdots \partial^{k_r} \alpha_{i_r}:$, which is another basis for $\cB^{\infty}[n]$. Note that $\cB^{\infty}$ need not be {\it freely} generated by $U$, so this subset may not include all possible monomials of weight $n$. Let $M = (m_{j,k}) \in \text{GL}_n(\mathbb{C})$ denote the change of basis matrix such that 
$$\mu_j = \sum_k m_{j,k} \beta_{k}.$$
Next, let $\{m_1,\dots, m_{d_n}\}$ be the monomials in the elements $\{a_i|\ i \in I\}$ and their derivatives obtained from $\mu_1,\dots, \mu_{d_n}$ by replacing each $\alpha_i$ by $a_i$. It follows from \eqref{phimorphism} that $\phi(m_j) = \mu_j$. Moreover, since $\{b_1,\dots, b_{d_n}\}$ is a basis for $\cB[n]$ as an $F_K$-module, we can write 
$$m_j = \sum_k m_{j,k}(\kappa) b_k,$$ for some functions $m_{j,k}(\kappa) \in F_K$. 
Taking the limit shows that $\lim_{\kappa \ra \infty} m_{j,k}(\kappa) = m_{j,k}$, and since the matrix $M = (m_{j,k})$ has nonzero determinant, it follows that this holds for $M(\kappa) = (m_{j,k}(\kappa))$ as well. Moreover, $\text{det}(M(\kappa))$ lies in $F_K$ and hence has degree zero as a rational function. Then $\frac{1}{\text{det}(M(\kappa))}$ has degree zero, but need not lie in $F_K$ since the denominator may have roots that do not lie in $K$. But if we let $S_n$ be the union of $K$ and this set of roots, $\text{det}(M(\kappa))$ will be invertible in $F_{S_n}$ and $\{m_1,\dots, m_{d_n}\}$ will form a basis of $F_{S_n} \otimes_{F_K} \cB[n]$ as an $F_{S_n}$-module. We now take $S = \bigcup_{n\geq 0} S_n$, and it is clear that the set $T$ has the desired properties. 
\end{proof}

\begin{cor} \label{passagecor} For $k \in \mathbb{C} \setminus S$, the vertex algebra $\cB^k = \cB/ (\kappa -k)$ is strongly generated by the image of $T$ under the map $\cB \ra \cB^k$.
\end{cor}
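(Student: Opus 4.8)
This corollary is essentially immediate from Lemma~\ref{passage}, the point being only that a strong generating set survives the specialization $\kappa \mapsto k$. The plan is as follows. By Lemma~\ref{passage} we may fix an at most countable $S \supseteq K$ such that $\cB' := F_S \otimes_{F_K}\cB$ is strongly generated by $T = \{a_i \mid i \in I\}$ (identified with $\{1\otimes a_i\}$). It then suffices to exhibit a surjective vertex algebra homomorphism $\cB' \to \cB_k$ for $k \notin S$ carrying $T$ to the image of $T$ in $\cB_k$, and to invoke the general fact that a surjective homomorphism sends a strong generating set to a strong generating set.

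First I would construct the specialization map. Fix $k \notin S$. Since the roots of every denominator occurring in $F_S$ lie in $S$, evaluation at $\kappa = k$ is a well-defined $\mathbb{C}$-algebra homomorphism $\mathrm{ev}_k\colon F_S \to \mathbb{C}$, restricting on $F_K$ to the corresponding map (well-defined because $k \notin K$). Because $\kappa$ acts on $\cB$ through the inclusion $F_K \subset F_S$, tensoring up and then killing $\kappa - k$ recovers $\cB_k$; that is, $\mathrm{ev}_k$ induces a vertex algebra homomorphism
$$\Phi_k\colon \cB' = F_S \otimes_{F_K}\cB \;\longrightarrow\; \mathbb{C}\otimes_{F_K,\,\mathrm{ev}_k}\cB \;=\; \cB/(\kappa - k)\cB \;=\; \cB_k,\qquad a \longmapsto \bar a,$$
whose composite with the canonical map $\cB \to \cB'$ is the quotient map $\cB \to \cB_k$. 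In particular $\Phi_k$ is surjective and sends $T \subset \cB'$ onto the image of $T$ in $\cB_k$.

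Second, I would spell out that strong generation passes through $\Phi_k$. Working weight by weight (both $\cB'$ and $\cB_k$ carry the conformal weight grading of the deformable family, with finite rank, resp.\ finite dimensional, homogeneous pieces), every element of $\cB'$ is a finite $F_S$-linear combination of iterated normally ordered products $:\!\partial^{n_1}a_{i_1}\cdots\partial^{n_r}a_{i_r}\!:$. Applying $\Phi_k$, which is additive, intertwines $\partial$, preserves normally ordered products, and sends each $F_S$-coefficient to its value under $\mathrm{ev}_k$, shows that every element of $\cB_k$ is a finite $\mathbb{C}$-linear combination of the corresponding $:\!\partial^{n_1}\bar a_{i_1}\cdots\partial^{n_r}\bar a_{i_r}\!:$. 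Hence the image of $T$ strongly generates $\cB_k$. There is no genuine obstacle here; the only step requiring attention is the bookkeeping that $\mathrm{ev}_k$ is defined on all of $F_S$ — i.e.\ that no denominator appearing in the coefficients of a normally ordered expression over $F_S$ vanishes at $k$ — which is exactly what the hypothesis $k \notin S$ and the definition of $F_S$ guarantee.
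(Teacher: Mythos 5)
Your proof is correct and follows exactly the route the paper intends: the corollary is stated without proof as an immediate consequence of Lemma~\ref{passage}, the implicit argument being precisely your specialization map $F_S\otimes_{F_K}\cB \to \cB_k$ given by evaluation at $\kappa = k$ (well-defined since $k\notin S$), under which normally ordered expressions in $T$ over $F_S$ map onto normally ordered expressions in the image of $T$ over $\mathbb{C}$. Your write-up just makes the bookkeeping explicit.
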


Since our description of $S$ is nonconstructive, it is difficult to determine when Corollary \ref{passagecor} can be applied to a specific value of $k$. However, a slightly weaker statement holds if $S$ is replaced by a much smaller set $S'$, which can be determined algorithmically. Since $T$ closes under operator product expansion in $F_S \otimes_{F_K}\cB$, each term appearing in the OPE of $a_i(z) a_j(w)$ for $i,j\in I$ can be expressed as a linear combination of normally ordered monomials of the form \begin{equation} \label{nop} :\partial^{k_1} a_{i_1}(w) \cdots \partial^{k_r} a_{i_r}(w):,\end{equation} for $i_1,\dots, i_r \in I$ and $k_1,\dots, k_r \geq 0$. The structure constants, i.e., the coefficients of these monomials, are rational function of $\kappa$ lying in $F_S$ but not necessarily in $F_K$. Let $D$ denote the set of poles of the structure constants appearing in $a_i(z)a_j(w)$ for all $i,j\in I$. Without loss of generality, we may assume that $D\subseteq S$. We now define $S' = K \cup D$.

\begin{lemma} \label{goodchoiceofs} As an $F_{S'}$-module, $F_{S'} \otimes_{F_K} \cB$ is strongly generated by $T$, up to torsion elements. 
\end{lemma}

\begin{proof} Let $\cA$ be the subalgebra of $F_{S'} \otimes_{F_K}\cB$ generated by $T$. Note first that $\cA$ is strongly generated by $T$; equivalently, $\cA$ is spanned as an $F_{S'}$-module by all normally ordered monomials \eqref{nop} as above. This is clear since the corresponding generating set in $F_S \otimes_{F_K}\cB$ (also denoted by $T$) is a strong generating set, but the structure constants in $F_S \otimes_{F_K}\cB$ all lie in the smaller algebra $F_{S'}$. Also, since $F_S \otimes_{F_K} \cB$ is strongly generated by $T$, we have \begin{equation} \label{eq:goodchoiceofs} F_S \otimes_{F_K} \cB= F_S \otimes_{F_{S'}} \cA =  F_S \otimes_{F_K} \cA.\end{equation} Since $F_{S'}$ is an integral domain, for each $m\geq 0$, $$\text{rank}_{F_{S'}}(\cA[m]) = \text{dim}_F (F \otimes_{F_{S'}} \cA[m])$$ is well-defined, where $F$ denotes the fraction field of $F_{S'}$. By \eqref{eq:goodchoiceofs}, we have $$\text{rank}_{F_K}(\cB[m]) = \text{rank}_{F_S}(F_S \otimes_{F_K} \cB[m]) = \text{rank}_{F_{S}}( F_S \otimes_{F_{S'}} \cA[m]) = \text{rank}_{F_{S'}}(\cA[m]).$$ 
Since $\text{rank}_{F_K}(\cB[m]) = \text{rank}_{F_{S'}}( F_{S'} \otimes_{F_K}\cB[m])$, it follows that $\frac{F_{S'} \otimes_{F_K} \cB[m]}{\cA[m]}$ is a torsion $F_{S'}$-module for all $m$, as claimed. \end{proof}

In many situations, $K$ is a finite set. For example, if $\cB$ is the deformable family $\cV$ satisfying $\cV / (\kappa - \sqrt{k})\cong V^k(\gg,B)$ for some Lie superalgebra $\gg$, we can take $K = \{0\}$. Also, in many situations $\cB^{\infty}$ is strongly generated by a finite set $U = \{\alpha_1,\dots, \alpha_r\}$. 

\begin{cor} \label{passagecorfinite} Suppose that $K$ is a finite set and $\cB^{\infty}$ is strongly generated by a finite set $U = \{\alpha_1,\dots, \alpha_r\}$. Then $\cB^k$ is strongly generated by the corresponding set $T = \{a_1,\dots, a_r\}$ for generic values of $k$. \end{cor}

If $U$ is a {\it minimal} strong generating set for $\cB^{\infty}$ it is not clear in general that $T$ is a minimal strong generating set for $\cB$, since there may exist relations of the form $\lambda(k) \alpha_j = P$, where $P$ is a normally ordered polynomial in $\{\alpha_i|\ i\in I,\ i\neq k\}$ and $\lim_{k\ra \infty} \lambda(k) = 0$, although $\lim_{k\ra \infty} P$ is nontrivial. However, there is one condition which holds in many examples, under which $T$ is a minimal strong generating set for $\cB$.

\begin{lemma} Suppose that $U = \{\alpha_i|\ i\in I\}$ is a minimal strong generating set for $\cB^{\infty}$ such that there is an $N\in \mathbb Z_{>0}$ with $\text{wt}(\alpha_i) <N$ for all $i\in I$. If there are no normally ordered polynomial relations among $\{\alpha_i|\ i\in I\}$ and their derivatives of weight less than $N$, then the corresponding set $T = \{a_i|\ i\in I\}$ is a minimal strong generating set for $\cB$.
\end{lemma}

\begin{proof} If $T$ is not minimal, there exists a decoupling relation $\lambda(k) a_j = P$ for some $j\in I$ of weight $\text{wt}(a_j)<N$. By rescaling if necessary, we can assume that either $\lambda(k)$ or $P$ is nontrivial in the limit $k\ra \infty$. We therefore obtain a nontrivial relation among $\{\alpha_i|\ i\in I\}$ and their derivatives of the same weight, which is impossible.
\end{proof}

In our main examples, the fact that relations among the elements of $U$ and their derivatives do not exist below a certain weight is a consequence of Weyl's second fundamental theorem of invariant theory for the classical groups \cite{W}.

\section{Orbifolds of free field algebras}  \label{sect:freeorbifold}
By a {\it free field algebra}, we mean any vertex algebra $\cV = \cH(n) \otimes \cF(m) \otimes \cS(r) \otimes \cA(s)$ for integers $m,n,r,s \geq 0$, where $\cB(0)$ is declared to be $\mathbb{C}$ for $\cB = \cH, \cS, \cF, \cA$. Building on our previous work, we establish the strong finite generation of $\cV^G$ for any reductive group $G \subseteq \text{Aut}(\cV)$ which preserves the tensor factors of $\cV$. Our description of these orbifolds is ultimately based on a classical theorem of Weyl (Theorem 2.5A of \cite{W}). Let $V_k \cong \mathbb{C}^n$ for $k\geq 1$, and let $G \subseteq \text{GL}(n)$, which acts on the ring $\text{Sym} \bigoplus_{k\geq 1} V_k$. For all $p\geq 1$, $\text{GL}(p)$ acts on $\bigoplus_{k =1}^{p} V_k $ and commutes with the action of $G$. There is an induced action of $\text{GL}(\infty) = \lim_{p\ra \infty} \text{GL}(p)$ on $\bigoplus_{k\geq 1} V_k$, so $\text{GL}(\infty)$ acts on $\text{Sym} \bigoplus_{k\geq 1} V_k$ and commutes with the action of $G$. Therefore $\text{GL}(\infty)$ acts on $R = (\text{Sym} \bigoplus_{k\geq 1} V_k)^G$ as well. Elements $\sigma \in \text{GL}(\infty)$ are known as {\it polarization operators}, and given $f\in R$, $\sigma f$ is known as a polarization of $f$. 

\begin{thm} \label{weylfinite} $R$ is generated by the polarizations of any set of generators for $(\text{Sym} \bigoplus_{k = 1}^{n} V_k)^G$. Since $G$ is reductive, $(\text{Sym} \bigoplus_{k = 1} ^{n} V_k)^G$ is finitely generated, so there exists a finite set $\{f_1,\dots, f_r\}$, whose polarizations generate $R$. \end{thm}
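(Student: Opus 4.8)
The plan is to derive this from two classical inputs: Hilbert's finiteness theorem and Weyl's polarization theorem. The second assertion is immediate: $G$ is linearly reductive (we work over $\mathbb{C}$) and acts linearly on the finite-dimensional space $\bigoplus_{k=1}^{n}V_k$, so by Hilbert's theorem the invariant ring $(\text{Sym}\bigoplus_{k=1}^{n}V_k)^G$ is finitely generated; fix generators $f_1,\dots,f_r$. The content is the first assertion, which is Theorem 2.5A of \cite{W}. Write $W=\mathbb{C}^n$ and $R_p=(\text{Sym}\bigoplus_{k=1}^{p}V_k)^G=\mathbb{C}[W^{\oplus p}]^G$. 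Since every $G$-invariant polynomial involves only finitely many $V_k$, we have $R=\bigcup_{p\ge 0}R_p$, with $R_p\subseteq R_q$ for $p\le q$ via the inclusion dual to the projection $W^{\oplus q}\twoheadrightarrow W^{\oplus p}$. Let $\cA\subseteq R$ be the subalgebra generated by the linear span of $GL(\infty)\cdot\{f_1,\dots,f_r\}$, equivalently by all polarizations $\sigma f_i$; this span is a $GL(\infty)$-submodule, hence stable under the diagonal operators $E_{kk}$ that define the multigrading, so $\cA$ is a multigraded subalgebra of $R$. The goal is $\cA=R$, i.e. $R_p\subseteq\cA$ for all $p$. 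For $p\le n$ this is clear, since $R_p\subseteq R_n=\langle f_1,\dots,f_r\rangle\subseteq\cA$; the work is in the range $p>n$.

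Here I would argue by downward induction on the number of variables, reducing $R_p$ to $R_{p-1}$ whenever $p>n$. Let $f\in R_p$ be multihomogeneous (allowed, since $\cA$ and $R$ are multigraded) with $d:=\deg_{x^{(p)}}f$; if $d=0$ then $f\in R_{p-1}$ and we are done by induction, so take $d\ge 1$. Substituting $x^{(p)}\mapsto\sum_{i=1}^{p-1}t_i x^{(i)}$ with scalar indeterminates $t_i$ and expanding gives
\[
f\big(x^{(1)},\dots,x^{(p-1)},\textstyle\sum_i t_i x^{(i)}\big)=\sum_{1\le i_1,\dots,i_d\le p-1}t_{i_1}\cdots t_{i_d}\,f_{i_1\cdots i_d}\big(x^{(1)},\dots,x^{(p-1)}\big),
\]
where each coefficient $f_{i_1\cdots i_d}$ is obtained from $f$ by iterated polarization operators $\Delta_{i_j,p}$, is $G$-invariant (the substitution is $G$-equivariant), and involves only $x^{(1)},\dots,x^{(p-1)}$, so $f_{i_1\cdots i_d}\in R_{p-1}$. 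Since $p-1\ge n$, any $n$ generic vectors among $x^{(1)},\dots,x^{(p-1)}$ span $W$, so the left-hand side — hence $f$ as a function of $x^{(p)}$ — is recovered from the right-hand side, and one shows that $f$ itself lies in the algebra generated by the polarizations of $R_{p-1}$. Iterating down to $p=n$, and using that a polarization of a polarization is again a polarization, every $R_p$ lies in the algebra generated by polarizations of $R_n=\langle f_1,\dots,f_r\rangle$, namely $\cA$.

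The main obstacle is precisely the recovery step: upgrading the displayed identity to an actual membership $f\in\cA$, rather than merely a statement about the restitutions $f_{i_1\cdots i_d}$. Solving $x^{(p)}=\sum_i t_i x^{(i)}$ directly introduces denominators that are only relative invariants (powers of $\det(x^{(i_1)},\dots,x^{(i_n)})$ for the character $\det$ of $G$), not honest $G$-invariants, so the argument has to be arranged so that these cancel; this is where Weyl uses the Cayley $\Omega$-process and Capelli-type identities, and it is the technical heart of Theorem 2.5A. Everything else — the reduction to multihomogeneous invariants, the bookkeeping of multidegrees, and the passage through the direct limit $R=\bigcup_p R_p$ — is routine.
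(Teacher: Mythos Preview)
The paper does not give its own proof of this statement; it is stated as Theorem~2.5A of \cite{W} and used as a black box. So there is nothing in the paper to compare your proposal against --- you are sketching Weyl's argument, not an argument the authors supply.

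Your outline is sound: the reduction to multihomogeneous invariants, the induction on the number of copies $p$ down to $p=n$ via the substitution $x^{(p)}\mapsto\sum_i t_i x^{(i)}$, and the identification of the recovery step as the only nontrivial point are all correct. One small misattribution: the Cayley $\Omega$-process and the Capelli identity are the tools Weyl uses in Chapter~II for the First Fundamental Theorems of the specific classical groups, not for Theorem~2.5A, which is stated for an \emph{arbitrary} subgroup $G\subset GL_n$ and so cannot appeal to those group-specific identities. Weyl's actual device at this point is more elementary: fully polarize $f$ in $x^{(p)}$ to a multilinear invariant, then use the universal linear dependence (Cramer-type identity) among any $n+1$ vectors in $\mathbb{C}^n$; after restitution the determinantal coefficients cancel in pairs and one lands in the subalgebra generated by polarizations of invariants of at most $n$ arguments. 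Your worry about relative invariants is legitimate in spirit but is circumvented by this multilinearization rather than resolved by Capelli. Apart from this misidentification of the technical tool, your sketch is an accurate summary of the classical proof.
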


As shown in \cite{CLII} (see Theorem 6.4) there is an analogue of this result for exterior algebras. Let $S = (\bigwedge \bigoplus_{k\geq 1} V_k)^G$ and let $d$ be the maximal degree of the generators of $(\text{Sym} \bigoplus_{k = 1} ^{n} V_k)^G$. Then $S$ is generated by the polarizations of any set of generators for $(\bigwedge \bigoplus_{k = 1}^{d} V_k)^G$. In particular, $S$ is generated by a finite number of elements together with their polarizations. By a similar argument, the same holds for rings of the form $$T = \big((\text{Sym} \bigoplus_{k\geq 1} V_k) \otimes (\bigwedge \bigoplus_{k\geq 1} W_k) \big)^G,$$ where $V_k = \mathbb{C}^n$, $W_k = \mathbb{C}^m$, and $G \subseteq \text{GL}(n) \times \text{GL}(m)$ is any reductive group.

\begin{thm} \label{mainfreefield} Let $\cV = \cH(m) \otimes \cF(n) \otimes \cS(r) \otimes \cA(s)$ for integers $m,n,r,s \geq 0$, and let $G\subseteq \text{O}(m) \times \text{O}(n) \times \text{Sp}(2r) \times \text{Sp}(2s)$ be a reductive group of automorphisms of $\cV$ that preserves the factors $\cH(m)$, $\cF(n)$,  $\cS(r)$, and  $\cA(s)$. Then $\cV^G$ is strongly finitely generated.
\end{thm}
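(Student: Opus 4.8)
The plan is to reduce the statement about $\cV^G$ to the three commutative-algebra facts recorded just above (Weyl's theorem and its exterior and mixed analogues), using the good increasing filtrations and the reconstruction property from Section 2. First I would record that, by the last paragraph of Section 2, the filtration on $\cV$ is $\text{Aut}(\cV)$-invariant, so for the reductive group $G$ we have an isomorphism of $\partial$-rings $\text{gr}(\cV)^G \cong \text{gr}(\cV^G)$ and a linear isomorphism $\cV^G \cong \text{gr}(\cV^G)$. Combining the $\partial$-ring descriptions \eqref{assgradfreefield} for the four types of factor, $\text{gr}(\cV)$ is
\begin{equation*}
\big(\text{Sym}\bigoplus_{j\geq 0}(U_j)\big)\otimes\big(\bigwedge\bigoplus_{j\geq 0}(U'_j)\big),
\end{equation*}
where $U_j \cong \C^m \oplus (\C^r \oplus (\C^r)^*)$ carries the $O(m)\times Sp(2r)$-action and $U'_j \cong \C^n \oplus (\C^s \oplus (\C^s)^*)$ carries the $O(n)\times Sp(2s)$-action, and $G$ sits inside the product of these classical groups and acts diagonally (i.e. the same way on each copy indexed by $j$), commuting with the $GL(\infty)$ of polarization operators. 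Thus $\text{gr}(\cV)^G$ is exactly a ring of the mixed type $T$ discussed after Theorem \ref{weylfinite}, hence is generated by a finite set of elements $f_1,\dots,f_r$ together with all their polarizations.

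Next I would turn this finite-generation-up-to-polarization statement into strong finite generation of $\cV^G$. The point is that polarization does not increase the number of generators in the relevant graded sense: each generator $f_a$ lives in some fixed number of the variables indexed by $j \in \{0,1,\dots,t\}$, and its polarizations $\sigma f_a$ for $\sigma \in GL(\infty)$ all have the same total "length" (degree as a polynomial in the generating variables), only the labels $j$ move. Lifting each $f_a$ to an element $\omega_a \in \cV^G$ via the projection $\phi$, the key observation—already exploited in \cite{LI,LII,LIII,LIV,CLII}—is that the vertex operators $\omega_a(z)$, together with only \emph{finitely many} of their iterated normally ordered products and derivatives, already strongly generate $\cV^G$: passing to $\text{gr}$, an iterated Wick product of $\omega_a$'s and their derivatives maps to a product of the corresponding $f_a$'s and their $\partial$-derivatives, and one shows by a decoupling argument that a polarization $\sigma f_a$, which a priori would need its own generator, is in fact expressible as a normally ordered polynomial in the $\omega_b$'s and their derivatives plus lower-filtration corrections, which are then handled by induction on the filtration degree. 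This is the step where one uses that the $f_a$'s are finitely many and that there are only finitely many "resonances" to remove.

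The main obstacle is precisely this decoupling step: showing that the infinitely many polarizations $\sigma f_a$ do not force infinitely many strong generators. Abstractly, the normally ordered product of two elements of $\cV^G$ whose images in $\text{gr}$ are $f_a \in U_{j}$-variables and $f_b \in U_{j'}$-variables will, in $\text{gr}$, be a bilinear-in-the-variables object, but the Wick product is not symmetric and produces, via the quantum corrections (the $\circ_n$ with $n\geq 0$ terms), exactly the "mixed polarization" terms—and one must check that the span of such products, after iterating and taking derivatives, exhausts all the $\sigma f_a$ modulo lower filtration. This is carried out type-by-type in the cited papers for $\cH(n)^{O(n)}$, $\cF(n)^{O(n)}$, $\cS(n)^{Sp(2n)}$, $\cA(n)^{Sp(2n)}$; the content of Theorem \ref{mainfreefield} is that the same argument goes through for an arbitrary reductive $G$ preserving the tensor factors, because the relevant invariant ring $\text{gr}(\cV)^G$ is still of the mixed type $T$ covered by Weyl's theorem, and the decoupling is a formal consequence once one has a finite generating set modulo polarization. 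I would therefore structure the proof as: (i) identify $\text{gr}(\cV)^G$ as a ring of type $T$ and invoke the finite-generation-modulo-polarization result; (ii) lift a finite generating set to $\cV^G$ and invoke the reconstruction property; (iii) run the decoupling induction on filtration degree to absorb the polarizations, citing the free-field orbifold arguments of \cite{LI,LII,LIII,LIV,CLII} for the mechanics and noting that nothing there used more about $G$ than reductivity and preservation of the tensor factors.
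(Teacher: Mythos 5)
Your steps (i) and (ii) are fine and match the paper's starting point: the filtration on $\cV$ is $G$-invariant, $\text{gr}(\cV^G)\cong\text{gr}(\cV)^G$ is a ring of the mixed type $T$ discussed after Theorem \ref{weylfinite}, and Weyl's theorem together with its exterior and mixed analogues gives finitely many generators up to polarization. The gap is in step (iii). You assert that the decoupling of the polarizations ``is a formal consequence once one has a finite generating set modulo polarization'' and that the arguments of \cite{LI,LII,LIII,LIV,CLII} used nothing about the group beyond reductivity and preservation of the tensor factors. Neither claim is true. In those papers the decoupling relations are constructed from the \emph{explicit} relations supplied by Weyl's second fundamental theorem for the classical groups $O(n)$ and $Sp(2n)$ acting on copies of the standard module; one computes the quantum corrections to specific normally ordered products of specific quadratic generators and checks by hand (or by a delicate existence argument, as for $\cH(m)^{O(m)}$ with $m>6$) that they produce the higher polarizations. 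For a general reductive $G$ the invariant ring $(\text{Sym}\bigoplus_{k=1}^{n}V_k)^G$ has generators of unbounded degree as $G$ varies and no uniform description of its relations, so there is no formal decoupling induction to run; whether the infinitely many polarizations can be absorbed is exactly the nontrivial content of the theorem and cannot be dismissed as bookkeeping.

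The paper's proof takes a different route precisely to avoid this. It invokes the Kac--Radul dual reductive pair decomposition $\cB\cong\bigoplus_{\nu}L(\nu)\otimes M^{\nu}$ for each tensor factor $\cB$ with respect to its \emph{full} automorphism group, so that $\cV$ becomes a direct sum of irreducible modules over $\cH(m)^{O(m)}\otimes\cF(n)^{O(n)}\otimes\cS(r)^{Sp(2r)}\otimes\cA(s)^{Sp(2s)}$. Weyl's theorem then enters only to show (as in Lemma 14.2 of \cite{LV}) that a strong generating set for $\cV^G$ lies in finitely many of these modules; the argument of Lemma 9 of \cite{LII} shows that each such module is obtained by applying the four full orbifold algebras to a finite set of vectors built from the strong generators $j^{2a}$, $\tilde{j}^{2b+1}$, $\tilde{w}^{2c+1}$, $w^{2d}$; and the hard decoupling work is quoted only for the four full orbifolds, where it has actually been carried out. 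To repair your argument you would need to replace step (iii) by this module-theoretic reduction, or else supply a genuinely new decoupling argument valid for arbitrary reductive $G$, which would be a substantial result in its own right.
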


\begin{proof} Note that $\cV \cong \text{gr}(\cV)$ as $G$-modules, and $$\text{gr}(\cV^G)\cong \text{gr}(\cV)^G \cong \big( ( \text{Sym}\bigoplus_{j\geq 0} V_j) \otimes  (\bigwedge \bigoplus_{j\geq 0} \bar{V}_j ) \otimes  (\text{Sym}\bigoplus_{j\geq 0} W_j)  \otimes  (\bigwedge \bigoplus_{j\geq 0} \bar{W}_j) \big)^G,$$ as supercommutative rings. Here $V_j \cong \mathbb{C}^m$, $\bar{V}_j \cong \mathbb{C}^n$, $W_j \cong \mathbb{C}^{2r}$, $\bar{W}_j \cong \mathbb{C}^{2s}$. 

By a general theorem of Kac and Radul \cite{KR} (see also \cite{DLM} for the case of compact $G$), for each of the vertex algebras $\cB = \cH(m), \cF(n), \cS(r), \cA(s)$, we have a dual reductive pair decomposition
$$\cB \cong \bigoplus_{\nu\in H} L(\nu)\otimes M^{\nu},$$ where $H$ indexes the irreducible, finite-dimensional representations $L(\nu)$ of $\text{Aut}(\cB)$, and the $M^{\nu}$'s are inequivalent, irreducible, highest-weight $\cB^{\text{Aut}(\cB)}$-modules. Therefore 
$$\cV \cong \bigoplus_{\nu, \mu, \gamma, \delta} L(\nu) \otimes L(\mu) \otimes L(\gamma)\otimes L(\delta) \otimes M^{\nu} \otimes M^{\mu} \otimes M^{\gamma} \otimes M^{\delta},$$ where $L(\nu)$, $L(\mu)$, $L(\gamma)$, and $L(\delta)$ are irreducible, finite-dimensional modules over $\text{O}(m)$, $\text{O}(n)$, $\text{Sp}(2r)$ and $\text{Sp}(2s)$, respectively, and $M^{\nu}$, $N^{\mu}$, $M^{\gamma}$ and $M^{\delta}$ are irreducible, highest-weight modules over $\cH(m)^{\text{O}(m)}$, $\cF(n)^{\text{O}(n)}$, $\cS(r)^{\text{Sp}(2r)}$, and $\cA(s)^{\text{Sp}(2s)}$, respectively. An immediate consequence whose proof is the same as the proof of Lemma 14.2 of \cite{LV} is that $\cV^G$ has a strong generating set which lies in the direct sum of finitely many irreducible modules over $\cH(m)^{\text{O}(m)} \otimes \cF(n)^{\text{O}(n)}\otimes \cS(r)^{\text{Sp}(2r)}\otimes \cA(s)^{\text{Sp}(2s)}$.

By Theorem 9.4 of \cite{LV}, $\cS(r)^{\text{Sp}(2r)}$ is of type $\cW(2,4,\dots, 2r^2+4r)$ and has strong generators \begin{equation} \label{kwyminus} \tilde{w}^{2k+1} = \frac{1}{2} \sum_{i=1}^r \big(:\beta^{i}\partial^{2k+1} \gamma^{i}:- :(\partial^{2k+1} \beta^{i}) \gamma^{i}: \big),\qquad k = 0,1, \dots , r^2+2r -1.\end{equation}
By Theorem 11.1 of \cite{LV}, $\cF(n)^{\text{O}(n)}$ is of type $\cW(2,4,\dots, 2n)$ and has strong generators 
$$\tilde{j}^{2k+1} = - \frac{1}{2} \sum_{i=1}^n :\phi^i \partial^{2k+1} \phi^i: ,\qquad k = 0,1,\dots,n-1.$$
By Theorem 3.11 of \cite{CLII}, $\cA(s)^{\text{Sp}(2s)}$ is of type $\cW(2,4,\dots, 2s)$ and has strong generators $$w^{2k} = \frac{1}{2}\sum_{i=1}^s \big(: e^i \partial ^{2k} f^i: + :(\partial^{2k} e^i) f^i:\big), \qquad k= 0,1,\dots, s-1.$$
In \cite{LIII}, it was conjectured that $\cH(m)^{\text{O}(m)}$ is of type $\cW(2,4,\dots, m^2+3m)$, and has strong generators
$$j^{2k} = \sum_{i=1}^m a^i \partial^{2k} a^i:,\qquad k = 0,1,\dots , \frac{1}{2}(m^2+3m-2). $$ In \cite{LIV} this was proven for $m\leq 6$. Although this conjecture was not proven for $m>6$, it was shown that for all $m$, $\cH(m)^{\text{O}(m)}$ has strong generators $\{j^{2k}|\ 0\leq k \leq K\}$ for some $K \geq \frac{1}{2}(m^2+3m-2)$.

For any irreducible $\cH(m)^{\text{O}(m)} \otimes \cF(n)^{\text{O}(n)} \otimes \cS(r)^{\text{Sp}(2r)} \otimes \cA(s)^{\text{Sp}(2s)}$-submodule $\cM$ of $\cV$ with highest-weight vector $f=f(z)$, and any subset $S\subseteq \cM$, define $\cM_S$ to be the subspace spanned by the elements
\begin{equation} \begin{split} &:\omega_1\cdots \omega_a \nu_1\cdots\nu_b \mu_1\cdots \mu_c \zeta_1\cdots \zeta_d \alpha:,\\ &\omega_i \in \cH(m)^{\text{O}(m)},\quad \nu_i \in \cF(n)^{\text{O}(n)},\quad \mu_i \in \cS(r)^{\text{Sp}(2r)},\quad \zeta_i\in \cA(s)^{\text{Sp}(2s)},\quad \alpha\in S. \end{split} \end{equation} By the same argument as Lemma 9 of \cite{LII}, there is a finite set $S$ of vertex operators of the form
$$j^{2a_1}(h_1)\cdots j^{2a_t}(h_t) \tilde{j}^{2b_1+1}(j_1)\cdots \tilde{j}^{2b_u+1}(j_u) \tilde{w}^{2c_1+1}(k_1) \cdots \tilde{w}^{2c_v +1}(k_v) w^{2d_1}(l_1) \cdots w^{2d_w }(l_w) f,$$ such that $\cM = \cM_S$.
In this notation
\begin{equation}\begin{split}  
&j^{2a_i}\in \cH(m)^{\text{O}(m)},\qquad 0\leq h_i \leq 2a_i \leq K,\\
&\tilde{j}^{2b_i +1}\in \cF(n)^{\text{O}(n)},\qquad 0\leq j_i <2b_i+1 \leq 2n-1,\\
& \tilde{w}^{2c_i+1} \in \cS(r)^{\text{Sp}(2r)},\qquad 0\leq k_i < 2c_i+1 \leq 2r^2+4r-1,\\
& w^{2d_i} \in \cA(s)^{\text{Sp}(2s)},\qquad 0\leq l_i \leq 2d_i \leq 2s-2. \end{split} \end{equation} In fact, this is equivalent to the $C_1$-cofiniteness of all irreducible $\cH(m)^{\text{O}(m)} \otimes \cF(n)^{\text{O}(n)} \otimes \cS(r)^{\text{Sp}(2r)} \otimes \cA(s)^{\text{Sp}(2s)}$-submodules of $\cV$, according to Miyamoto's definition \cite{MiII}. Combining this with the strong finite generation of each of the vertex algebras $\cH(m)^{\text{O}(m)}$, $\cF(n)^{\text{O}(n)}$, $\cS(r)^{\text{Sp}(2r)}$, and $\cA(s)^{\text{Sp}(2s)}$, completes the proof. 
\end{proof}

\section{Orbifolds of affine vertex superalgebras}  \label{sect:affineorbifold}

In \cite{LIII} it was shown that for any Lie algebra $\gg$ with a nondegenerate form $B$, and any reductive group $G$ of automorphisms of $V^k(\gg,B)$, $V^k(\gg,B)^G$ is strongly finitely generated for generic values of $k$. In this section, we extend this result to the case of affine vertex superalgebras. Let $\gg = \gg_0 \oplus \gg_1$ be a finite-dimensional Lie superalgebra over $\mathbb{C}$, where $\text{dim}(\gg_0) = n$ and $\text{dim}(\gg_1) = 2m$, and let $B$ be a nondegenerate form on $\gg$. Let $\cV$ be the deformable vertex algebra over $F = F_K$ for $K = \{0\}$ from Example \ref{avsa}, such that $V^k(\gg,B) \cong \cV/ (\kappa - \sqrt{k})$, and $\cV^{\infty} = \lim_{k\ra \infty} \cV  \cong \cH(n)\otimes \cA(m)$. Define the map $\psi: \cV \ra \cV^{\infty}$ by \begin{equation} \label{clinmap} \psi\big(\sum_r c_r(\kappa) m_r(a^{\xi_i})\big) = \sum_{r} c_r m_r(\alpha^{\xi_i}),\qquad c_r = \lim_{\kappa \ra \infty} c_r(\kappa).\end{equation} In this notation, $m_r(a^{\xi_i})$ is a normally ordered monomial in $\partial^j a^{\xi_i}$, and $m_r(\alpha^{\xi_i})$ is obtained from $m_r(a^{\xi_i})$ by replacing each $a^{\xi_i}$ with $\alpha^{\xi_i}$. This map is easily seen to satisfy 
\begin{equation} \label{preserveproduct} \psi(\omega \circ_n \nu) = \psi(\omega) \circ_n \psi(\nu),\end{equation} for all $\omega,\nu \in \cV$ and $n\in \mathbb{Z}$.

Note that $\cV$ has a good increasing filtration, where $\cV_{(d)}$ is spanned by normally ordered monomials in $\partial^l a^{\xi_i}$ and $\partial^l a^{\eta^{\pm}_j}$ of degree at most $d$. We have isomorphisms of $\partial$-rings
$$\text{gr}(\cV) \cong F\otimes_{\mathbb{C}} \big(\text{Sym} \bigoplus_{j\geq 0} V_j \big) \bigotimes \big(\bigwedge \bigoplus_{j\geq 0} W_j\big) \cong F\otimes_{\mathbb{C}} \text{gr}(\cV^{\infty}), \qquad V_j \cong \gg_0, \qquad W_j \cong \gg_1.$$ 
The action of $G$ on $\cV$ preserves the formal variable $\kappa$, and we have 
$$\text{gr}(\cV^G) \cong \text{gr}(\cV)^G \cong F\otimes_{\mathbb{C}} R \cong F\otimes_{\mathbb{C}} \text{gr}(\cV^{\infty})^G \cong F\otimes_{\mathbb{C}} \text{gr}((\cV^{\infty})^G),$$ where $R=\big((\text{Sym} \bigoplus_{j\geq 0} V_j) \bigotimes (\bigwedge \bigoplus_{j\geq 0} W_j)\big)^G$. Finally, $\cV^G[w]$ is a free $F$-module and $$\text{rank}_F(\cV^G[w]) =\text{dim}_{\mathbb{C}} ((\cV^{\infty})^G[w]) = \text{dim}_{\mathbb{C}} (V^k(\gg,B)^G[w])$$ for all $w\geq 0$ and $k\in \mathbb{C}$.

Fix a basis $\{\xi_{1,l},\dots, \xi_{n,l}\}$ for $V_l$, which corresponds to $$\{\partial^{l} a^{\xi_1},\dots,\partial^l a^{\xi_n}\}\subseteq \cV,\qquad \{\partial^{l} \alpha^{\xi_1},\dots,\partial^l \alpha^{\xi_n}\}\subseteq \cV^{\infty},$$ respectively. Similarly, fix a basis $\{\eta^{\pm}_{1,l},\dots, \eta^{\pm}_{m,l}\}$ for $W_l$ corresponding to $$\{\partial^{l} a^{\eta^{\pm}_1},\dots,\partial^l a^{\eta^{\pm}_m}\}\subseteq \cV,\qquad \{\partial^{l} \alpha^{\eta^{\pm}_1},\dots,\partial^l \alpha^{\eta^{\pm}_m}\}\subseteq \cV^{\infty},$$ respectively. The ring $R$ is graded by degree and weight, where $\xi_{1,l},\dots, \xi_{n,l}, \eta^{\pm}_{1,l},\dots, \eta^{\pm}_{m,l}$ have degree $1$ and weight $l+1$. Choose a generating set $S = \{s_i|\ i\in I\}$ for $R$ as a $\partial$-ring, where $s_i$ is homogeneous of degree $d_i$ and weight $w_i$. We may assume that $S$ contains finitely many generators in each weight. We can find a corresponding strong generating set $T = \{t_i|\ i\in I\}$ for $\cV^G$, where $$t_i\in (\cV^G)_{(d_i)},\qquad \phi_{d_i}(t_i) = 1\otimes s_i \in F\otimes_{\mathbb{C}} R.$$ Here $\phi_{d_i}: (\cV^G)_{(d_i)} \ra (\cV^G)_{(d_i)}/(\cV^G)_{(d_i-1)}\subseteq \text{gr}(\cV^G)$ is the usual projection. In particular, the leading term of $t_i$ is a sum of normally ordered monomials of degree $d_i$ in the variables $a^{\xi_i}, a^{\eta^{\pm}_j}$ and their derivatives, and the coefficient of each such monomial is independent of $\kappa$. Let $u_i = \psi(t_i) \in (\cV^{\infty})^G$ where $\psi$ is given by \eqref{clinmap}, and define \begin{equation} (\cV^G)^{\infty} = \bra U\ket \subseteq (\cV^{\infty})^G, \end{equation} where $\bra U\ket$ is the vertex algebra generated by $\{u_i|\ i\in I\}$. Since $\{t_i|\ i\in I\}$ strongly generates $\cV^G$ and closes under OPE (possibly nonlinearly), it follows from \eqref{preserveproduct} that $\{u_i|\ i\in I\}$ also closes under OPE and strongly generates $U$.

Fix $w\geq 0$, and let $\{m_1,\dots, m_r\}$ be a set of normally ordered monomials in $t_i$ and their derivatives, which spans the subspace $\cV^G[w]$ of weight $w$. Then $(\cV^G)^{\infty}[w]$ is spanned by the corresponding monomials $\mu_l = \psi(m_l)$ for $l=1,\dots, r$, which are obtained from $m_l$ by replacing $t_i$ with $u_i$. Given normally ordered polynomials
$$P(u_i) = \sum_{l=1}^r c_l \mu_l \in  (\cV^G)^{\infty}[w],\qquad \tilde{P}(t_i) = \sum_{l=1}^r  c_l(\kappa) m_l \in\cV^G[w],$$ with $c_l \in \mathbb{C}$ and $c_l(\kappa) \in F$, we say that $\tilde{P}(t_i)$ {\it converges termwise} to $P(u_i)$ if $$\lim_{\kappa \ra \infty} c_l(\kappa) = c_l,\qquad l=1,\dots, r.$$ In particular, $\tilde{P}(t_i)$ converges termwise to zero if and only if $\lim_{\kappa \ra \infty} c_l(\kappa) = 0$ for $l=1,\dots, r$.

\begin{lemma} \label{keylemma} For each normally ordered polynomial relation $P(u_i)$ in $(\cV^G)^{\infty}$ of weight $m$ and leading degree $d$, there exists a relation $\tilde{P}(t_i) \in \cV^G$ of weight $m$ and leading degree $d$ which converges termwise to $P(u_i)$.
\end{lemma}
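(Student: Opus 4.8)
The plan is to reduce the statement to a known structural fact about the deformable family $\cV$ that underlies $V_k(\gg,B)$, namely that every normally ordered polynomial relation in the limit algebra $\cV_\infty$ lifts to a relation in $\cV$ that converges termwise to it. This is precisely the general property of deformable families recorded in Section~\ref{deformation} (the discussion following equation \eqref{preservecircle}): all normally ordered polynomial relations $P(\alpha_i)$ among the generators of $\cB_\infty$ are of the form $\lim_{\kappa\ra\infty}\tilde P(a_i)$ for a relation $\tilde P(a_i)$ over $F_K$ converging termwise to $P$. So the first step is to recall that $\cV$ is a deformable family over $F=F_{\{0\}}$, that $\cV_\infty\cong\cH(n)\otimes\cF(m)$, and that the $G$-action is defined over $F$ and preserves $\kappa$, so that $\cV^G$ is again a free $F$-module with $\operatorname{rank}_F(\cV^G[w])=\dim_{\mathbb C}((\cV_\infty)^G[w])$, as established just above the statement of the lemma.

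Next I would set up the bookkeeping that makes ``leading degree'' meaningful on both sides. Using the good increasing filtration on $\cV$ and on $\cV_\infty$, together with the strong generating sets $T=\{t_i\}$ for $\cV^G$ and $U=\{u_i=\psi(t_i)\}$ for $(\cV^G)_\infty$, one fixes for each weight $w$ the finite spanning set $\{m_1,\dots,m_r\}$ of $\cV^G[w]$ by normally ordered monomials in the $t_i$ and their derivatives; the images $\mu_l=\psi(m_l)$ span $(\cV^G)_\infty[w]$. A relation $P(u_i)=\sum_l c_l\mu_l=0$ of weight $m$ and leading degree $d$ means the $c_l$ are supported on monomials of degree $\le d$ with at least one of degree exactly $d$ nonzero, and the degree-$d$ part of $\sum_l c_l\mu_l$ vanishes in $\operatorname{gr}(\cV_\infty)^G\cong R$. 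The key point here is that because $t_i$ and $u_i$ have the same leading term (the coefficients of the top-degree monomials in $t_i$ are independent of $\kappa$, by construction of $T$), the symbol map identifies the degree-$d$ parts of $m_l$ and $\mu_l$ in $F\otimes_{\mathbb C}R$ and in $R$ respectively, so a degree-$d$ relation among the $\mu_l$ in $R$ is literally the same relation among the symbols of the $m_l$.

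The heart of the argument is then the lifting. Apply the deformable-family property to $\cV$ itself: the normally ordered polynomial $P$, viewed as a polynomial in the generators $a^{\xi_i}$ (via $u_i=\psi(t_i)$, each $u_i$ being a polynomial in the $\alpha^{\xi_i}$ with $\kappa$-independent coefficients), lifts to a relation $\tilde P$ among the $a^{\xi_i}$ over $F$ converging termwise to $P$. Re-expressing $\tilde P$ in terms of the $t_i$ — possible since $T$ strongly generates $\cV^G$ and the passage $t_i\mapsto u_i$ is compatible with all $\circ_n$ products (the map $\psi$ satisfies $\psi(\omega\circ_n\nu)=\psi(\omega)\circ_n\psi(\nu)$) — yields $\tilde P(t_i)=\sum_l c_l(\kappa)m_l\in\cV^G[w]$ with $\lim_{\kappa\ra\infty}c_l(\kappa)=c_l$, i.e. $\tilde P(t_i)$ converges termwise to $P(u_i)$. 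It remains to check that $\tilde P(t_i)$ has leading degree exactly $d$: its degree $\le d$ because $\tilde P$ converges termwise to $P$ which has degree $d$ (higher-degree coefficients would have to be $\kappa$-independent-free but they converge to $0$, and in fact by rescaling one arranges they do not appear), and its degree-$d$ symbol is the common relation identified in the previous paragraph, which is nonzero in $F\otimes_{\mathbb C}R$.

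The step I expect to be the main obstacle is the last one: ensuring the lifted relation has the \emph{same} leading degree $d$ rather than a strictly larger one. A priori the lift $\tilde P$ produced by the general deformable-family mechanism could contain degree-$(d+1)$ (or higher) terms whose coefficients vanish as $\kappa\ra\infty$; one must argue these can be cleared. The resolution is that any such higher-degree terms, having $\kappa$-dependent coefficients tending to $0$, can be absorbed using the relations of $\cV^G$ of leading degree $d+1$ that lift the trivial relation — more carefully, one inducts downward on degree, at each stage using that the degree-$(d+j)$ symbol of $\tilde P$ must vanish in $R$ (since $P$ has no such terms and symbols are $\kappa$-independent), hence equals a relation among the symbols of lower-degree monomials, which one lifts and subtracts. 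This downward induction terminates at degree $d$ and produces the desired $\tilde P$ of leading degree exactly $d$; the finiteness of the spanning set $\{m_1,\dots,m_r\}$ in each weight guarantees the process is finite.
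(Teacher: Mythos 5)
The central step of your argument does not go through. You propose to view the relation $P(u_i)$ as a normally ordered polynomial in the generators $\alpha^{\xi_i}$ of $\cV_\infty$ and then invoke the general lifting property of deformable families to obtain a relation $\tilde P$ among the $a^{\xi_i}$ in $\cV$. But $\cV$ and $\cV_\infty$ are \emph{freely} generated by these elements: the only normally ordered polynomial relation among the $\alpha^{\xi_i}$ and their derivatives is the trivial one. When you expand the nontrivial relation $P(u_i)=\sum_l c_l\mu_l=0$ in the free generators and collect terms, every coefficient vanishes, so the ``relation among the $a^{\xi_i}$'' you would lift is the zero polynomial and carries no information. The whole content of the lemma concerns relations among the \emph{non-free} strong generators $u_i$ of the orbifold, about which the general principle for the ambient free algebra says nothing. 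The subsequent step, ``re-expressing $\tilde P$ in terms of the $t_i$,'' is likewise not a well-defined operation on formal relations: a relation is a formal expression evaluating to zero, and rewriting it over a different generating set while controlling the $\kappa$-dependence of the new coefficients is exactly the problem to be solved; it does not follow from $\psi(\omega\circ_n\nu)=\psi(\omega)\circ_n\psi(\nu)$. (Invoking the principle for the deformable family $\cV^G$ with generators $t_i$ instead would be circular, since identifying $\lim_{\kappa\to\infty}\cV^G$ with $(\cV_\infty)^G$ and the limits of the $t_i$ with the $u_i$ is Corollary \ref{ginfcom}, which is deduced from this lemma.)

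The actual argument uses the ingredient you correctly isolate in your second paragraph --- that $t_i$ and $u_i$ have the same degree-$d_i$ symbol $s_i$ --- but in a different way. Since $P(u_i)=0$, the leading part $P^d$ gives a relation $P^d(s_i)=0$ in $R=\mathrm{gr}(\cV_\infty)^G$; because $\phi_{d_i}(t_i)=1\otimes s_i$, the element $P(t_i)\in\cV^G$ therefore lies in filtration degree $d-1$. Strong generation by $T$ lets one rewrite $P(t_i)$ as a normally ordered polynomial $P_0(t_i)$ of degree at most $d-1$, so $Q=P-P_0$ is a genuine relation among the $t_i$ whose leading term is $P^d$, of degree exactly $d$. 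Its termwise limit is $P(u_i)$ minus the termwise limit $P_1(u_i)$ of $P_0(t_i)$; since $P_1(u_i)=\psi(P(t_i))=P(u_i)=0$, this $P_1$ is again a relation, of strictly smaller leading degree, and an induction on the leading degree lifts $P_1$ and corrects $Q$ to the desired $\tilde P$. The downward induction in your last paragraph is aimed at clearing spurious higher-degree terms from a lift that, for the reason above, was never actually produced.
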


\begin{proof} We may write $P(u_i) = \sum_{a=1}^d P^a(u_i)$, where $P^a(u_i)$ is a sum of normally ordered monomials $\mu =\ :\partial^{j_1} u_{i_1} \cdots \partial^{j_t} u_{i_t}:$ of degree $a = d_{i_1} + \cdots +d_{i_t}$. The leading term $P^d(u_i)$ corresponds to a relation in $R$ among the generators $s_i$ and their derivatives, i.e., $P^d(s_i) = 0$. It follows that $P^d(t_i) \in (\cV^G)_{(d-1)}$. Since $P^a(u_i) \in ((\cV^G)^{\infty})_{(a)}$ for $a=1,\dots, d-1$, we have $P(t_i) \in (\cV^G)_{(d-1)}$. Since $\{t_i|\ i\in I\}$ strongly generates $\cV^G$, we can express $P(t_i)$ as a normally ordered polynomial $P_0(t_i)$ of degree at most $d-1$. Let $Q(t_i) = P(t_i) - P_0(t_i)$, which is therefore a relation in $\cV^G$ with leading term $P^d(t_i)$. 

If $P_0(t_i)$ converges termwise to zero, we can take $\tilde{P}(t_i) = Q(t_i)$ since $P(t_i)$ converges termwise to $P(u_i)$. Otherwise, $P_0(t_i)$ converges termwise to a nontrivial relation $P_1(u_i)$ in $(\cV^G)^{\infty}$ of degree at most $d-1$. By induction on the degree, there is a relation $\tilde{P}_1(t_i)$ of leading degree at most $d-1$, which converges termwise to $P_1(u_i)$. Finally, $\tilde{P}(t_i) = P(t_i) - P_0(t_i) - \tilde{P}_1(t_i)$ has the desired properties. \end{proof}

\begin{cor} \label{ginfcom} $(\cV^G)^{\infty} = (\cV^{\infty})^G = (\cH(n) \otimes \cA(m))^G$. In particular, $\cV^G$ is a deformable family with limit $(\cH(n) \otimes \cA(m))^G$.
\end{cor}

\begin{proof} Recall that $\text{rank}_{F} (\cV^G[w]) = \text{dim}_{\mathbb{C}}((\cV^{\infty})^G[w])$ for all $w\geq 0$. Since $(\cV^G)^{\infty} \subseteq (\cV^{\infty})^G$, it suffices to show that  $\text{rank}_{F} (\cV^G[w]) = \text{dim}_{\mathbb{C}}((\cV^G)^{\infty}[w])$ for all $w\geq 0$. Let $\{m_1,\dots,m_r\}$ be a basis for $\cV^G[w]$ as an $F$-module, consisting of normally ordered monomials in $t_i$ and their derivatives. The corresponding elements $\mu_l = \psi(m_l)$ for $l=1,\dots, r$ span $(\cV^G)^{\infty}[w]$, and by Lemma \ref{keylemma} they are linearly independent. Otherwise, a nontrivial relation among $\mu_1,\dots, \mu_r$ would give rise to a nontrivial relation among $m_1,\dots, m_r$.
\end{proof}

\begin{thm} $V^k(\gg,B)^G$ is strongly finitely generated for generic values of $k$.
\end{thm}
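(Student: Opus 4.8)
The plan is to combine the identification of the large-level limit of $\cV^G$ with the descent machinery of Lemma \ref{passage}. By Theorem \ref{mainfreefield}, the free field orbifold $(\cH(n) \otimes \cF(m))^G$ is strongly finitely generated; fix a finite strong generating set $U = \{\alpha_i \mid i \in I_0\}$ for it. By Corollary \ref{ginfcom}, $(\cV^G)_{\infty} = (\cV_{\infty})^G = (\cH(n)\otimes\cF(m))^G$, so $U$ is a finite strong generating set for the limit vertex algebra $\cB_{\infty}$, where we write $\cB = \cV^G$, a vertex algebra over $F = F_{\{0\}}$.

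First I would apply Lemma \ref{passage} to $\cB$ and the strong generating set $U$ of $\cB_{\infty}$: choosing a lift $T = \{a_i \mid i \in I_0\} \subset \cV^G$ of $U$ under the map $\psi$, the lemma produces an at most countable set $S \subset \C$ with $0 \in S$ such that $F_S \otimes_F \cV^G$ is strongly generated by $T$. By Corollary \ref{passagecor}, for every $k' \notin S$ the quotient $\cV^G/(\kappa - k')$ is strongly generated by the finite image of $T$.

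It remains to pass back to the level parameter. Since $G$ is reductive and acts trivially on $\kappa$, formation of $G$-invariants is exact and commutes with the specialization $\kappa \mapsto \sqrt{k}$, so $V_k(\gg,B)^G \cong \bigl(\cV/(\kappa - \sqrt{k})\bigr)^G \cong \cV^G/(\kappa - \sqrt{k})$ for all $k \neq 0$; this is compatible with the equality $\text{rank}_{F}(\cV^G[w]) = \text{dim}_{\C}(V_k(\gg,B)^G[w])$ recorded above. Hence, for every $k$ outside the at most countable set $\{k : \sqrt{k} \in S\} \cup \{0\}$, the orbifold $V_k(\gg,B)^G$ is strongly generated by the finitely many elements coming from $T$, which is the assertion.

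The substantive input is entirely in the two ingredients invoked---Theorem \ref{mainfreefield} and Corollary \ref{ginfcom}---so I do not expect a real obstacle in the remaining argument; the only points needing care are bookkeeping ones: that the square-root reparametrization $\kappa^2 = k$ does not enlarge the nongeneric set beyond a countable set, and that one genuinely needs the limit to be the \emph{full} invariant algebra $(\cV_{\infty})^G$ (not merely a subalgebra), since Lemma \ref{passage} requires a strong generating set for all of $\cB_{\infty}$; that equality is what Corollary \ref{ginfcom} supplies via Lemma \ref{keylemma}.
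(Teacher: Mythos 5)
Your proposal is correct and follows exactly the route the paper takes: the paper's proof is the one-line observation that the result is immediate from Theorem \ref{mainfreefield} applied to $\cH(n)\otimes\cF(m)$, together with Corollaries \ref{passagecor} and \ref{ginfcom}. Your additional bookkeeping (the identification $V_k(\gg,B)^G \cong \cV^G/(\kappa-\sqrt{k})$ via reductivity of $G$, and the countability of the image of $S$ under $s\mapsto s^2$) correctly fills in details the paper leaves implicit in its earlier rank computation $\text{rank}_{F}(\cV^G[w]) = \text{dim}_{\mathbb{C}}(V_k(\gg,B)^G[w])$.
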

\begin{proof} Since $K = \{0\}$, this is immediate from Theorem \ref{mainfreefield} applied to $\cV = \cH(n) \otimes \cA(m)$ and Corollaries \ref{passagecor}, \ref{passagecorfinite} and \ref{ginfcom}. 
\end{proof}

\begin{thm} Let $\cV = \cH(m) \otimes \cF(n) \otimes \cS(r) \otimes \cA(s)$ be a free field algebra and let $\gg$ be a Lie superalgebra equipped with a nondegenerate form $B$. Let $G$ be a reductive group of automorphisms of $\cV \otimes V^k(\gg,B)$ which preserves each tensor factor. Then $(\cV \otimes V^k(\gg,B))^G$ is strongly finitely generated for generic values of $k$.
\end{thm}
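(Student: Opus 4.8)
The plan is to mirror the argument of Section~\ref{sect:affineorbifold}, carrying the $\kappa$-independent free field algebra $\cV$ along as a passive tensor factor. Write $\dim(\gg_0) = p$ and $\dim(\gg_1) = 2q$, and let $\cV'$ be the deformable family of Example~\ref{avsa}, so that $\cV'/(\kappa - \sqrt{k}) \cong V_k(\gg,B)$ and $\cV'_{\infty} = \lim_{\kappa\ra\infty}\cV'$ is a free field algebra built from $\gg_0$ and $\gg_1$. Since a deformable family remains one after tensoring over $\mathbb{C}$ with a fixed conformally graded vertex algebra with finite-dimensional graded pieces, the object $\cD := \cV\otimes_{\mathbb{C}}\cV'$, regarded as a vertex algebra over $F = F_{\{0\}}$, is again a deformable family: it is free over $F$, $\mathbb{Z}_{\geq 0}$-graded by conformal weight with each graded piece of finite rank, $\cD/(\kappa - \sqrt{k}) \cong \cV\otimes V_k(\gg,B)$, and $\cD_{\infty} = \lim_{\kappa\ra\infty}\cD \cong \cV\otimes \cV'_{\infty}$, which is once more a free field algebra.

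Next I would extend the $G$-action. By hypothesis $G$ acts linearly on the generators of $\cV$ and of $V_k(\gg,B)$ and preserves each tensor factor; since automorphisms of $\gg$ are even, $\gg_0$ and $\gg_1$ are $G$-invariant, so in the limit $G$ acts on $\cD_{\infty} = \cV\otimes\cV'_{\infty}$ preserving every tensor factor, including the Heisenberg and fermionic factors coming from $\gg_0$ and $\gg_1$. Thus $G$ is a reductive group of automorphisms of a free field algebra of the type covered by Theorem~\ref{mainfreefield}, acting compatibly with the tensor decomposition, and hence $\cD_{\infty}^G$ is strongly finitely generated. As in Section~\ref{sect:affineorbifold}, $\cD$ carries a good increasing filtration with $\text{gr}(\cD) \cong F\otimes_{\mathbb{C}}\text{gr}(\cD_{\infty})$ as $\partial$-rings, the $G$-action fixes $\kappa$, and $\text{gr}(\cD^G) \cong \text{gr}(\cD)^G \cong F\otimes_{\mathbb{C}}\text{gr}(\cD_{\infty})^G$; in particular each $\cD^G[w]$ is a free $F$-module with $\text{rank}_F(\cD^G[w]) = \dim_{\mathbb{C}}(\cD_{\infty}^G[w])$.

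The heart of the matter is then to prove the analogue of Corollary~\ref{ginfcom}, namely $\langle\psi(T)\rangle = \cD_{\infty}^G$, where $T$ is a strong generating set of $\cD^G$ lifting a $\partial$-ring generating set of $\text{gr}(\cD_{\infty})^G$ with finitely many generators in each weight (possible since $G$ is reductive), and $\psi\colon \cD \ra \cD_{\infty}$ is the $F$-linear projection defined as in Section~\ref{sect:affineorbifold}. The proof of Lemma~\ref{keylemma} uses only the reconstruction property of good filtrations, the strong generation of $\cD^G$ by $T$, and induction on degree, none of which is special to the affine case, so it transfers verbatim to $\cD$: every normally ordered relation in $\langle\psi(T)\rangle$ lifts to a relation in $\cD^G$ converging termwise to it, and the rank count of the previous paragraph forces $\langle\psi(T)\rangle = \cD_{\infty}^G$. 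Combining the strong finite generation of $\cD_{\infty}^G$ with Lemma~\ref{passage} and Corollary~\ref{passagecor}, $\cD^G$ is strongly finitely generated over $F_S$ for some at-most-countable $S$, whence $(\cV\otimes V_k(\gg,B))^G = \cD^G/(\kappa - \sqrt{k})$ is strongly finitely generated for every $k$ with $\sqrt{k}\notin S$, i.e.\ for generic $k$.

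The one point demanding genuine care is the claim used in the second paragraph, that the limiting $G$-action on $\cD_{\infty}$ preserves the tensor factors in precisely the form required by Theorem~\ref{mainfreefield}: one must check that the induced action on the Heisenberg factor attached to $\gg_0$ lands in the relevant orthogonal group (it does, because $G$ preserves the limiting operator product $\alpha^{\xi_i}(z)\alpha^{\xi_j}(w)\sim\delta_{i,j}(z-w)^{-2}$), that the action on the fermionic factor attached to $\gg_1$ similarly lands in the appropriate classical group, and that no cross-terms mixing the $\cV$-factors with the $\cV'_{\infty}$-factors are introduced. Once this bookkeeping is in place, the remainder is a routine transcription of the arguments of Section~\ref{sect:affineorbifold}, and I expect no further obstacle.
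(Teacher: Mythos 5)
Your proposal is correct and follows essentially the same route as the paper: pass to the $k\to\infty$ limit, identify $\lim_{k\ra\infty}(\cV\otimes V_k(\gg,B))$ as a free field algebra on which $G$ still preserves the tensor factors, apply Theorem \ref{mainfreefield}, and descend via Corollary \ref{passagecor}. The paper's proof is just a two-line version of this that leaves implicit the identification $\lim_{k\ra\infty}\big((\cV\otimes V_k(\gg,B))^G\big)\cong\big(\lim_{k\ra\infty}\cV\otimes V_k(\gg,B)\big)^G$ (the analogue of Corollary \ref{ginfcom}), which you spell out by transporting Lemma \ref{keylemma}.
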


\begin{proof} We have $$\lim_{k\ra \infty} \cV \otimes V^k(\gg,B) \cong \cV \otimes \cH(n) \otimes \cA(m),$$ where $n = \text{dim}(\gg_0)$ and $m = \frac{1}{2} \text{dim}(\gg_1)$, and $$\lim_{k\ra \infty} ( \cV \otimes V^k(\gg,B))^G \cong (\cV \otimes \cH(n) \otimes \cA(m))^G.$$ Clearly $G$ preserves the tensor factors, so the claim follows from Theorem \ref{mainfreefield} and Corollaries \ref{passagecor} and \ref{passagecorfinite}.
\end{proof}

\section{Cosets of $V^k(\gg,B)$ inside larger structures: generic behavior} \label{sect:mainresult}

Let $\gg$ be a finite-dimensional reductive Lie algebra, equipped with a nondegenerate symmetric, invariant bilinear form $B$. Let $\cA^k$ be a vertex (super)algebra whose structure constants depend continuously on $k$, admitting a homomorphism $V^k(\gg,B)\ra \cA^k$. Many cosets of the form $$\text{Com}(V^k(\gg,B), \cA^k)$$ have been studied in both the physics and mathematics literature. One class of examples is $$\cA^k = V^k(\gg',B'),$$ where $\gg'$ is a Lie (super)algebra containing $\gg$, and $B$ is a nondegenerate, (super)symmetric invariant form on $\gg'$ extending $B$. Another class of examples is $$\cA^k = V^{k-l} (\gg',B') \otimes \cF,$$ where $\gg'$ and $B'$ are as above and $\cF$ is a free field algebra admitting a map $\phi: V^l(\gg,B) \ra \cF$ for some fixed $l \in \mathbb{C}$. We require that the action of $\gg$ on $\cF$ integrates to an action of a connected Lie group $G$ whose Lie algebra is $\gg$, and that $G$ preserves the tensor factors of $\cF$. The map $V^k(\gg,B) \ra \cA^k$ is just the diagonal map $X^{\xi_i} \mapsto X^{\xi_i} \otimes 1 + 1 \otimes \phi(X^{\xi_i})$.

To construct examples of this kind, we recall a well-known homomorphism $$\tau: V^{-1/2}(\gs\gp_{2n})\ra \cS(n).$$ In terms of the basis $\{e_{i,j}| 1\leq i\leq 2n, \ 1\leq j\leq 2n\}$ for $\gg\gl_{2n}$, a standard basis for $\gs\gp_{2n}$ consists of $$e_{j,k+n} + e_{k,j+n}, \qquad  -e_{j+n,k} - e_{k+n,j}, \qquad e_{j,k} - e_{n+k, n+j}, \qquad 1\leq j,k\leq n.$$ Define $\tau$ by \begin{equation} \label{spembedding} X^{e_{j,k+n} + e_{k,j+n}} \mapsto \ : \gamma^j \gamma^k:, \qquad X^{-e_{j+n,k} - e_{k+n,j}} \mapsto \ :\beta^j \beta^k:,\qquad X^{e_{j,k} - e_{n+k, n+j}} \mapsto \ :\gamma^j \beta^k:. \end{equation} This map is well known to factor through the simple quotient $L_{-1/2}(\gs\gp_{2n})$. Also, the $\gs\gp_{2n}$-action coming from the zero modes $\{X^{\xi}(0)|\ \xi \in \gs\gp_{2n}\}$ integrates to the usual action of $\text{Sp}(2n)$ on $\cS(n)$. 

There is a similar homomorphism $$\sigma: V^1(\gs\go_m) \ra \cF(m)$$ which factors through the simple quotient $L_1(\gs\go_m)$, and whose zero mode action integrates to $\text{SO}(m)$. If $\gg$ is any reductive Lie algebra which embeds in $\gs\gp_{2n}$, and $B_1$ is the restriction of the form on $\gs\gp_{2n}$ to $\gg$, we obtain a restriction map $\tau_{\gg}: V^1(\gg,B_1) \ra \cS(n)$. Similarly, if $\gg$ embeds in $\gs\go_m$ we obtain a restriction map $\sigma_{\gg}: V^1(\gg,B_2)\ra \cF(m)$, where $B_2$ is the restriction of the form on $\gs\go_m$ to $\gg$. We have the diagonal map $$V^1(\gg,B_1+B_2) \ra \cS(n) \otimes \cF(m), \qquad X^{\xi} \mapsto  \tau_{\gg} (X^{\xi}) \otimes 1 + 1 \otimes \sigma_{\gg}(X^{\xi}).$$ The action of $\gg$ coming from the zero modes integrates to an action of a connected Lie group $G$ with Lie algebra $\gg$, which preserves both $\cS(n)$ and $\cF(m)$.

Finally, we mention one more class of examples $$\cA^k = V^{k-l}(\gg',B') \otimes V^l(\gg'',B'').$$ Here $\gg''$ is another finite-dimensional Lie (super)algebra containing $\gg$, equipped with a nondegenerate, invariant, (super)symmetric bilinear form $B''$ extending $B$. As usual, the map $V^k(\gg,B) \ra \cA^k$ is the diagonal map $X^{\xi_i} \mapsto X^{\xi_i} \otimes 1 + 1 \otimes X^{\xi_i}$. If $V^l(\gg'',B'')$ is not simple, we may replace $V^l(\gg'',B'')$ with its quotient by any nontrivial ideal in the above definition.

In order to study all the above cosets from a unified point of view it is useful to axiomatize $\cA^k$. 
\begin{defn} \label{def:good} A vertex algebra $\cA^k$ with structure constants depending continuously on $k$, which admits a map $V^k(\gg,B) \ra \cA^k$ will be called {\it good} if the following conditions hold.

\begin{enumerate} 

\item There exists a deformable family $\cA$ defined over $F_K$ for some (at most countable) subset $K\subseteq \mathbb{C}$ containing zero, such that $\cA^k = \cA / (\kappa - \sqrt{k})$. Letting $\cV$ be as in Example \ref{avsa}, there is a homomorphism $\cV \ra \cA$ inducing the map $V^k(\gg,B)\ra \cA^k$ for each $k$ with $\sqrt{k} \notin K$.

\item For generic values of $k$, $\cA^k$ admits a Virasoro element $L^{\cA}$ and a conformal weight grading $\cA^k = \bigoplus_{d \in \mathbb{N}} \cA^k[d]$. For all $d$, $\text{dim}(\cA^k[d])$ is finite and independent of $k$.

\item For generic values of $k$, $\cA^k$ decomposes into finite-dimensional $\gg$-modules, so the action of $\gg$ integrates to an action of a connected Lie group $G$ having $\gg$ as Lie algebra. 

\item We have a vertex algebra isomorphism $$\cA^{\infty} = \lim_{\kappa \ra \infty} \cA \cong \cH(d) \otimes \tilde{\cA},\qquad d =\text{dim}(\gg).$$ Here $\tilde{\cA}$ is a vertex subalgebra of $\lim_{\kappa \ra \infty} \cA$ with Virasoro element $L^{\tilde{\cA}}$ and $\mathbb{N}$-grading by conformal weight, with finite-dimensional graded components. Also, the action of $G$ on $\cA^{\infty}$ preserves $\tilde{A}$.

\item Although $L^{\gg}_0$ need not act diagonalizably on $\cA^k$, it induces a grading on $\cA^k$ into generalized eigenspaces corresponding to the Jordan blocks of each eigenvalue. In general, these generalized eigenspaces can be infinite-dimensional. However, any highest-weight $V^k(\gg,B)$-submodule of $\cA^k$ has finite-dimensional components with respect to this grading for generic values of $k$.

\end{enumerate}
\end{defn}

\begin{remark}\label{rem:Kac}
Since the $V^k(\gg,B)$-submodules of $\cA^k$ have finite-dimensional $L^{\gg}_0$ generalized eigenspaces, simple quotients of such modules must be simple quotients of Weyl modules. Generically, Weyl modules are already simple and in that case do not allow for nontrivial extensions by \cite{Ku}. In other words, generically $V^k(\gg,B)$ acts completely reducibly on $\cA^k$, so $$\cA^k \cong \bigoplus_{\lambda \in P^+} V^k(\lambda) \otimes \cC^k(\lambda),$$ as a $V^k(\gg,B)\otimes \text{Com}(V^k(\gg,B), \cA^k)$-module. Here $P^+$ denotes the set of dominant weight of $\gg$, $V^k(\lambda)$ are the Weyl modules, and $\cC^k(\lambda)$ the multiplicity spaces, which are modules for the coset $\text{Com}(V^k(\gg,B), \cA^k)$.  

For a fixed $\lambda$, $V^k(\lambda) \otimes \cC^k(\lambda)$ is graded by conformal weight. The field $\phi$ of a vector of minimal weight has the property that the OPE with a strong generator $a$ of  $V^k(\gg,B)\otimes \text{Com}(V^k(\gg,B), \cA^k)$ has no poles of order higher than the conformal weight $\Delta$ of $a$. This minimal-weight property does not change upon specializing to any specific value of $k$. 
\end{remark}

Suppose that $\text{dim}(\gg) = d$ and $\gg'  =\gg'_0 \oplus \gg'_1$ where $\text{dim}(\gg'_0) = n$, $\text{dim}(\gg'_1) = 2m$, and $n\geq d$. It is not difficult to check that the above examples $$\cA^k = V^k(\gg',B'),\qquad \cA^k = V^{k-l}(\gg',B')\otimes \cF,\qquad \cA^k = V^{k-l}(\gg',B')\otimes V^l(\gg'',B''),$$ are good according to Definition \ref{def:good}. For $\cA^k = V^k(\gg',B')$, we have \begin{equation}\label{tildeA1} \tilde{\cA} \cong \cH(n-d) \otimes \cA(m).\end{equation} Similarly, for $\cA^k = V^{k-l}(\gg',B')\otimes \cF$, we have \begin{equation}\label{tildeA2} \tilde{\cA} \cong \cH(n-d) \otimes \cA(m) \otimes \cF.\end{equation} Finally, for $\cA^k = V^{k-l}(\gg',B') \otimes V^l(\gg'',B'')$, we have \begin{equation}\label{tildeA3} \tilde{\cA} \cong \cH(n-d) \otimes \cA(m) \otimes V^l(\gg'',B'').\end{equation} Also, $V^{k-l}(\gg',B')\otimes V^l(\gg'',B'')$ remains good if we replace $V^l(\gg'',B'')$ by its quotient by any ideal, and \eqref{tildeA3} holds if we replace $V^l(\gg'',B'')$ with its quotient by the ideal.

Let $\cA^k$ be an $F_K$-vertex algebra which is good as above, and let $\cA$ and $\cV$ be the deformable families with $\cA^k = \cA / (\kappa - \sqrt{k})$ and $V^k(\gg,B) = \cV / (\kappa - \sqrt{k})$ for $\sqrt{k}\notin K$. Let \begin{equation} \label{defofdfcc} \cC = \text{Com}(\cV, \cA),\end{equation} which is an $F_K$-vertex algebra with Virasoro element $$L^{\cC} = L^{\cA} - L^{\gg},$$ where $L^{\cA}$ and $L^{\gg}$ are the Virasoro elements in $\cA$ and $\cV$, respectively. For all $k$ such that $\sqrt{k}\notin K$, define \begin{equation} \cC^k = \cC / (\kappa - \sqrt{k}).\end{equation} By abuse of notation, we use the same symbol $L^{\cC} = L^{\cA} - L^{\gg}$ to denote the Virasoro element of $\cC^k$, where $L^{\cA}$ and $L^{\gg}$ are now the Virasoro elements in $\cA^k$ and $V^k(\gg,B)$, respectively. Under the isomorphism $\cA^{\infty} = \lim_{\kappa \ra \infty} \cA = \cH(d) \otimes \tilde{\cA}$, we have $$\lim_{\kappa \ra \infty}  L^{\gg} = L^{\cH} = \frac{1}{2} \sum_{i=1}^d :\alpha^{\xi_i} \alpha^{\xi_i}:,\qquad \lim_{k\ra \infty} L^{\cA} = L^{\cH} + L^{\tilde{\cA}}.$$ 

If $\cA$ is a deformable family and $G \subseteq \text{Aut}(\cA)$ is a reductive group of automorphisms, $\cA^G$ is also a deformable family and $\cA^G / (\kappa - \sqrt{k}) \cong (\cA^k)^G$ whenever $\sqrt{k} \notin K$. However, it is not clear a priori that $\cC$ is a deformable family (i.e., $\cC[n]$ is a free $F_K$-submodule of $\cA[n]$ for all $n\geq 0$), or that for $\sqrt{k} \notin K$, we have \begin{equation} \label{defcommutant} \cC^k = \text{Com}(V^k(\gg,B), \cA^k).\end{equation} Clearly $\cC^k \subseteq \text{Com}(V^k(\gg,B), \cA^k)$ and \eqref{defcommutant} holds generically, but unlike the case of orbifolds, \eqref{defcommutant} need not hold for all $\sqrt{k}\notin K$. For example, let $\gg$ be simple, $B$ the normalized Killing form, $\cA^k  = V^k(\gg)$, $\cA = \cV$, and $K = \{0\}$. Then $\cC = \text{Com}(\cV, \cV) = \mathbb{C}1$, but at the critical level $k = -h^{\vee}$, $\text{Com}(V^{-h^{\vee}}(\gg), V^{-h^{\vee}}(\gg))$ is the Feigin-Frenkel center of $V^{-h^{\vee}}(\gg)$.

Our first task is to determine the possible values of $k$ for which $\cC^k \neq \text{Com}(V^k(\gg,B), \cA^k)$, and in the process we will show that $\cC$ is indeed a deformable family. For the moment, we assume that $\gg$ is simple and that $B$ is the normalized Killing form, so $V^k(\gg,B)=V^k(\gg)$. Next, we introduce another $F_K$-vertex algebra $$\cK = \text{Ker}_\cA(L^{\gg}_0) \cap \cA^G.$$ Here $G$ is a connected Lie group with Lie algebra $\gg$ acting on $\cA$, as in Definition \ref{def:good}. We clearly have $\cC \subseteq \cK$. We let $$\cK^k = \cK / (\kappa - \sqrt{k}),$$ and let $\mathbb{Q}_{\leq 0}$ denote the set of nonpositive rational numbers.

\begin{lemma} If $\sqrt{k}\notin K$ and $k+h^{\vee}\notin \mathbb{Q}_{\leq 0}$, $$\cK^k = \text{Ker}_{\cA^k}(L^{\gg}_0) \cap (\cA^k)^G.$$ In particular, the graded character of $\cK^k$ is independent of $k$, for all $k$ such that $k+h^{\vee}\notin \mathbb{Q}_{\leq 0}$.\end{lemma}

\begin{proof} Clearly $\cK^k \subseteq \text{Ker}_{\cA^k}(L^{\gg}_0) \cap (\cA^k)^G$. All Weyl modules of $\hat{\gg}$ are at generic level completely reducible (Remark \ref{rem:Kac}), and hence 
$L_0^\gg$ acts semisimply on $\cA$. In particular, $\text{Ker}_{\cA}(L^{\gg}_0)$ is already the generalized eigenspace of eigenvalue zero. Furthermore, the action of $G$ clearly preserves the graded subspaces of $\cA$ and the $G$-invariant space is again a free $F_K$-module.  

The generalized $L_0^\gg$-eigenvalues on $\cA^k$ all have to be of the form 
 $\frac{\text{Cas}(M)}{k+h^{\vee}}+n$ for a non-negative integer $n$, where $\text{Cas}(M)$ is the Casimir eigenvalue on an irreducible representation $M$ of $\gg$. In particular, $\frac{\text{Cas}(M)}{k+h^{\vee}}+n=0$ is only possible if $M$ is the trivial representation and $n=0$, or if $k+h^\vee$ is a negative rational number. Hence for $k+h^{\vee} \notin \mathbb{Q}_{\leq 0}$, we have $\cK^k \supseteq \text{Ker}_{\cA^k}(L^{\gg}_0) \cap (\cA^k)^G$. \end{proof}
 
 We have actually just proven
\begin{cor}\label{cor:eigen} If $\sqrt{k}\notin K$ and $k+h^\vee \notin \mathbb{Q}_{\leq 0}$ then $\cK^k = \text{Ker}_{\cA^k}(L^{\gg}_0)$ and $L_0^\gg$ acts semisimply on the generalized eigenspace of eigenvalue zero in $\cA^k$.
\end{cor}

\begin{cor}\label{cor:kdeffamily}
$\cK$ is a deformable family, that is, each weight-graded space $\cK[n]$ is a free $F_K$-submodule of $\cA[n]$.
\end{cor}

\begin{lemma} \label{lem:characterizationofck} If $\sqrt{k}\notin K$ and $k+ h^{\vee}\notin \mathbb{Q}_{\leq 0}$, $$\text{Com}(V^k(\gg), \cA^k) = \text{Ker}_{\cA^k}(L^{\gg}_0).$$ \end{lemma}

\begin{proof} Clearly any $\omega \in \text{Com}(V^k(\gg), \cA^k)$ is annihilated by $L^{\gg}_0$. Conversely, suppose that $\omega \in  \text{Ker}_{\cA^k}(L^{\gg}_0)$. Since $\omega \in (\cA^k)^G$ by Corollary \ref{cor:eigen}, if $\omega \notin \text{Com}(V^k(\gg), \cA^k)$, then $X^{\xi_i}\circ_1 \omega \neq 0$ for each $i=1,\dots, d$. We will show that such an $\omega$ cannot exist if $k + h^{\vee} \notin \mathbb{Q}_{\leq 0}$.
 
Recall that $\cA^k$ is $\mathbb{N}$-graded by conformal weight (i.e., $L_0^{\cA}$-eigenvalue). Write $\omega$ as a sum of terms of homogeneous weight, and let $m$ be the maximum value which appears. Let $\gg_+\subseteq \hat{\gg}$ be the Lie subalgebra generated by the positive modes $\{X^{\xi}(k)|\ \xi\in\gg,\ k>0\}$. Note that each element of $U(\gg_+)$ lowers the weight by some $k>0$, and the conformal weight grading on $U(\gg_+)$ is the same as the grading by $L^{\gg}_0$-eigenvalue. An element $x\in U(\gg_+)$ of weight $-k$ satisfies $x (\omega) \in \cA_{m-k}$. Also, $x(\omega)$ lies in the generalized eigenspace of $L^{\gg}_0$ of eigenvalue $-k$, and $x(\omega) = 0$ if $k>m$.

It follows that $U(\gg_+)\omega$ is a finite-dimensional vector space graded by conformal weight. In particular, the subspace $M\subseteq U(\gg_+)\omega$ of minimal weight $m$ is finite-dimensional. Hence it is a finite-dimensional $\gg$-module, and is thus a direct sum of finite-dimensional highest-weight $\gg$-modules. Moreover, $U(\gg_+)$ acts trivially on $M$.  Since $\gg$ is simple and $L^{\gg}$ is the Sugawara vector at level $k$, the eigenvalue of $L^{\gg}_0$ on $M$ is given by
\begin{equation} \label{cas} L^{\gg}_0|_{M} = \frac{\text{Cas}(M)}{k+h^{\vee}}. \end{equation}
In fact, each irreducible summand of $M$ must have the same $L_0^{\gg}$ eigenvalue and hence the same Casimir eigenvalue. This is a rational number. The $L_0^{\gg}$ eigenvalue on $M$ must actually be a negative integer $r$, with $-m \leq r \leq -1$. This statement and \eqref{cas} can only be true for rational values of $k < -h^{\vee}$. 
\end{proof}

\begin{cor}  \label{cor:grchar} $\cC$ is a deformable family over $F_K$, and if $\sqrt{k}\notin K$ and $k+h^{\vee}\notin \mathbb{Q}_{\leq 0}$, we have $\cC^k = \text{Com}(V^k(\gg), \cA^k)$.
\end{cor}

\begin{proof} Since $\cC^k = \text{Com}(V^k(\gg), \cA^k) = \text{Ker}_{\cA^k}(L^{\gg}_0) = \cK^k$ for generic values of $k$, we obtain $\cC = \cK$ as vertex algebras over $F_K$. Since $\cK$ is a deformable family over $F_K$, so is $\cC$. Therefore $\cC^k = \cK^k$ for all $k$ with $\sqrt{k}\notin K$. Finally, since $$\cK^k = \text{Ker}_{\cA^k}(L^{\gg}_0) =  \text{Com}(V^k(\gg), \cA^k)$$ for $k+h^{\vee}\notin \mathbb{Q}_{\leq 0}$, the claim follows. \end{proof}

We now return to the situation where $\gg$ is reductive and $B$ is nondegenerate.

\begin{cor} Suppose that $\gg$ is reductive and the restriction of $B$ to each simple ideal $\gg_i \subseteq \gg$ is the normalized Killing form on $\gg_i$. Then $\cC$ is a deformable family over $F_K$ and $\cC^k = \text{Com}(V^k(\gg,B), \cA^k)$ for all $k$ such that $k+h_i^{\vee} \notin \mathbb{Q}_{\leq 0}$, for all $i$. Here $h_i^{\vee}$ is the dual Coxeter number of the $\gg_i$. \end{cor}
 
\begin{proof} For a one-dimensional abelian summand of $\gg$, the corresponding affine vertex algebra is just the Heisenberg algebra $\cH(1)$, and $G = \mathbb{C}^*$. For all $k\neq 0$, we have $$\text{Com}(\cH(1), \cA^k) = \text{Ker}(L^{\cH}_0) \cap (\cA^k)^{\mathbb{C}^*}.$$ This is immediate from the fact that $\cH(1)$ acts completely reducibly on $\cA^k$, so that $(\cA^k)^{\mathbb{C}^*} \cong \cH(1) \otimes \text{Com}(\cH(1), \cA^k)$. The result then follows by induction on the number of simple and abelian summands of $\gg$. \end{proof}

\begin{remark} \label{rem:failureofspec}
If the restriction of $B$ to $\gg_i$ is a nonzero constant $\lambda_i$ times the normalized Killing form on $\gg_i$, the above statement must be modified as follows: $\cC$ is a deformable family over $F_K$ and $\cC^k = \text{Com}(V^k(\gg,B), \cA^k)$ for all $k$ such that $k \lambda _i +h_i^{\vee} \notin \mathbb{Q}_{\leq 0}$, for all $i$. \end{remark}

Now we are able to give a precise description of the limit $\cC^{\infty} = \lim_{\kappa \ra \infty} \cC$.

\begin{thm}\label{thm:orbifoldlimit} Let $\gg$, $B$, and $\cA$ be as above. Then we have a vertex algebra isomorphism
$$\lim_{\kappa \ra \infty} \cC \cong \tilde{\cA}^G.$$
\end{thm}

\begin{proof}
The operator $L^{\gg}_0$ acts on the (finite-dimensional) spaces $\cA^k[n]$ of weight $n$ and commutes with $G$, so it maps $(\cA^k)^G[n]$ to itself. By Lemma \ref{lem:characterizationofck}, $\cC^k[n]$ is generically the kernel of this map. Let $$\phi: \cA^k[n] \ra \cA^{\infty}[n] = (\cH(d) \otimes \tilde{\cA})[n]$$ be the map sending $\omega\mapsto \lim_{k\ra \infty} \omega$, which is injective and maps $\cC^k[n]$ into $\tilde{\cA}[n]$. Then $$\Phi = \phi \circ L^{\gg}_0: (\cA^k)^G[n] \ra (\cH(d) \otimes \tilde{\cA})^G[n]$$ also has kernel equal to $\cC^k[n]$. It is enough to show that $\text{dim}(\text{Ker}(\Phi)) \geq \text{dim} (\tilde{\cA}^G[n])$. Equivalently, we need to show that $\text{dim}(\text{Coker}(\Phi)) \geq \text{dim} (\tilde{\cA}^G[n])$. To see this, note that any element in the image of $L^{\gg}_0$ is a linear combination of elements of the form $:(\partial^i a^{\xi_i}) \nu:$ for $\xi_i \in \gg$ and $i \geq 0$. Under $\phi$ these get mapped to $:(\partial^i \alpha^{\xi_i}) \phi(\nu):$. In particular, each term has weight at least one under $L^{\cH}_0$, so $\tilde{\cA}^G[n]$ injects into $\text{Coker}(\Phi)$. 
\end{proof}

Let $\gg$ be reductive and $B$ nondegenerate, and suppose that $\cA^k$, $\tilde{\cA}$, and $G$ are as above. Let $U = \{\alpha_i |\  i\in I\}$ be a strong generating set for $\tilde{\cA}^G$, and let $T= \{a_i|\ i\in I\}$ be the corresponding subset of $\cC$ such that $\lim_{\kappa\ra \infty} a_i = \alpha_i$. By Remark \ref{rem:failureofspec}, there is a subset of $\mathbb{Q}$ for which $ \cC^k$ may be a proper subset of $\text{Com}(V^k(\gg,B), \cA^k)$, but away from these points
$\cC^k = \text{Com}(V^k(\gg,B), \cA^k)$. By expanding $K$ to include these points, we can regard $\cC$ as a deformable family such that $\cC^k = \text{Com}(V^k(\gg,B), \cA^k)$ for all $k$ such that $\sqrt{k} \notin K$. 


\begin{cor}  \label{maincori} Let $\gg$ be reductive and $B$ nondegenerate, and suppose that $\cA^k$ is good. Suppose that $\tilde{\cA}$ is a tensor product of free field algebras and affine vertex algebras at generic level, and the induced action of $G$ on $\tilde{\cA}$ preserves each tensor factor. Then $\text{Com}(V^k(\gg,B), \cA^k)$ is strongly finitely generated for generic values of $k$. \end{cor}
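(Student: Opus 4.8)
The plan is to combine the structural results already assembled in this section into a short chain of implications. By hypothesis $\cA_k$ is good, so by the theorem immediately preceding we have a vertex algebra isomorphism $\lim_{k\ra\infty}\cC_k\cong\tilde{\cA}^G$. The coset $\cC_k$ is a quotient of a deformable family: indeed, since $\cA_k=\cA/(\kappa-\sqrt{k})$ where $\cA$ is a deformable family over $F_K$, one checks that the commutant $\cC=\mathrm{Com}(\cV,\cA)$ taken inside $\cA$ (with $\cV$ as in Example \ref{avsa}) is itself a vertex algebra over $F_K$ with $\cC_k=\cC/(\kappa-\sqrt{k})$ for generic $k$, and that $\cC_\infty=\lim_{\kappa\ra\infty}\cC$ agrees with $\lim_{k\ra\infty}\cC_k$. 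Here one uses condition (2) of goodness to guarantee that each graded piece $\cC[n]$ is a free $F_K$-module of finite rank equal to $\dim\cC_k[n]$, which is the finiteness needed for $\cC$ to qualify as a deformable family in the sense of Section \ref{deformation}.

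Next I would invoke the hypothesis that $\tilde{\cA}$ is a tensor product of free field and affine vertex algebras and that the induced $G$-action preserves each tensor factor. Combining Theorem \ref{mainfreefield} (orbifolds of free field algebras) with the last theorem of Section \ref{sect:affineorbifold} (orbifolds of $\cV\otimes V_k(\gg,B)$ under a reductive group preserving tensor factors), and more generally the remark that the same argument covers an arbitrary finite tensor product of free field and affine factors, we conclude that $\tilde{\cA}^G$ is strongly finitely generated. Strictly speaking, if $\tilde{\cA}$ contains affine factors $V_l(\gg'',B'')$ at a fixed level, one applies the mixed free-field/affine orbifold theorem with the affine factor's own deformation parameter sent to infinity only if needed; since $l$ is fixed and generic in the relevant examples, $\tilde{\cA}^G$ is strongly finitely generated by that theorem. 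Thus $\cC_\infty\cong\tilde{\cA}^G$ is strongly finitely generated; fix a finite strong generating set $U=\{\alpha_i\}$ for it.

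Finally I would apply Lemma \ref{passage} and Corollary \ref{passagecor} to the deformable family $\cC$: the finite set $U$ strongly generating $\cC_\infty$ lifts to a finite subset $T=\{a_i\}\subset\cC$ with $\phi(a_i)=\alpha_i$, and there is an at most countable set $S\supset K$ such that $F_S\otimes_{F_K}\cC$ is strongly generated by $T$; hence for all $k\notin S$ the quotient $\cC_k=\cC/(\kappa-k)$ is strongly generated by the image of the finite set $T$. Declaring these values of $k$ to be the generic ones completes the proof.

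The main obstacle is the bookkeeping in the first paragraph: verifying that the commutant $\cC$, formed inside the deformable family $\cA$, is again a genuine deformable family over $F_K$ — that is, that it is a \emph{free} $F_K$-module with finite-rank graded pieces and that passing to the $\kappa\ra\infty$ limit commutes with taking the commutant, so that $\cC_\infty=\tilde{\cA}^G$ rather than something larger. This is exactly where conditions (1)--(5) in the definition of \emph{good} are used: (1) and (4) identify $\cA_\infty$ and give the homomorphism $\cV\ra\cA$; (2) gives the finiteness and $k$-independence of $\dim\cA_k[n]$, hence of $\dim\cC_k[n]$; (3) makes $\cA_k^G$ well-defined with $\cA_k$ decomposing into finite-dimensional $\gg$-modules; and (5) controls the $L^\gg_0$-grading on highest-weight submodules so that Lemma \ref{lem:characterizationofck} applies, giving $\cC_k=\mathrm{Ker}(L^\gg_0)\cap(\cA_k)^G$ and hence the clean limit statement of the preceding theorem. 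Once that structural identification is in hand, the strong finite generation is a formal consequence of the orbifold theorems and the deformable-family machinery.
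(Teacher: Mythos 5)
Your proposal is correct and follows essentially the same route the paper intends: the corollary is an immediate consequence of the preceding theorem ($\lim_{k\ra\infty}\cC_k\cong\tilde{\cA}^G$), the orbifold results of Sections \ref{sect:freeorbifold}--\ref{sect:affineorbifold} giving strong finite generation of $\tilde{\cA}^G$, and the deformable-family transfer via Lemma \ref{passage} and Corollary \ref{passagecor}. Your explicit attention to why $\cC=\mathrm{Com}(\cV,\cA)$ is itself a deformable family, and to the caveat about affine tensor factors at a fixed level $l$ (which the paper handles by requiring $l$ generic in Corollary \ref{maincorii}), fills in details the paper leaves implicit.
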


\begin{cor} \label{maincorii} Let $\gg$ be reductive and $B$ nondegenerate. Let $\gg'$ be a Lie (super)algebra containing $\gg$, equipped with a nondegenerate (super)symmetric form $B'$ extending $B$, and let $\cA^k = V^k(\gg',B')$. Then $$\text{Com}(V^k(\gg,B), V^k(\gg',B'))$$ is strongly finitely generated for generic values of $k$.

Let $\cF$ be a free field algebra admitting a map $V^l(\gg,B) \ra \cF$ for some fixed $l$, and let $\cA^k = V^{k-l}(\gg',B') \otimes \cF$. If the induced action of $G$ preserves the tensor factors of $\cF$, $$ \text{Com}(V^k(\gg,B), V^{k-l}(\gg',B') \otimes \cF)$$ is strongly finitely generated for generic values of $k$.

Finally, let $\gg''$ be another Lie (super)algebra containing $\gg$, equipped with a nondegenerate (super)symmetric form $B''$ extending $B$, and let $\cA^k = V^{k-l}(\gg',B') \otimes V^l(\gg'',B'')$. Then $$\text{Com}(V^k(\gg,B), V^{k-l}(\gg',B') \otimes V^l (\gg'',B''))$$ is strongly finitely generated for generic values of both $k$ and $l$. 

\end{cor}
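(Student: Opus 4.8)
The plan is to reduce all three assertions of Corollary~\ref{maincorii} to the single general statement in Corollary~\ref{maincori}. The entire content of the proof is therefore bookkeeping: for each of the three choices of $\cA_k$ one must (i) verify that $\cA_k$ is good in the sense of the five-part definition, (ii) identify the vertex algebra $\tilde{\cA}$ in part (4) of that definition, (iii) observe that $\tilde{\cA}$ is a tensor product of free field algebras and affine vertex algebras, and (iv) check that the group $G$ (connected with $\Lie(G)=\gg$, acting on $\cA_k$ through the zero modes of $V_k(\gg,B)$) preserves each tensor factor of $\tilde{\cA}$. Once these four points are in place, Corollary~\ref{maincori} applies verbatim and yields strong finite generation of $\cC_k$ for generic $k$ (and, in the last case, for generic $k$ and $l$ simultaneously, since then $\cA_k = V_{k-l}(\gg',B')\otimes V_l(\gg'',B'')$ is a deformable family in two formal parameters and we may first send $l\to\infty$ through admissible values and then $k\to\infty$, or equivalently work over $F_K$ in the variable attached to $k$ with $l$ fixed generic).

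First I would handle $\cA_k = V_k(\gg',B')$. Goodness was already asserted in the text just before Lemma~\ref{lem:characterizationofck}: the deformable family $\cA$ is the two-step construction of Example~\ref{avsa} applied to $\gg'$ (with $\kappa^2=k$), so $\cA_k = \cA/(\kappa-\sqrt k)$ and $\cV\to\cA$ is the obvious inclusion of generators; the Virasoro element is Sugawara, which exists for generic $k$; the $\gg$-action integrates to $G$ because $V_k(\gg',B')$ decomposes into finite-dimensional $\gg'$-modules, a fortiori into finite-dimensional $\gg$-modules; and $\cA_\infty = \lim_{k\to\infty}V_k(\gg',B') \cong \cH(n)\otimes\cF(m) = \cH(d)\otimes\bigl(\cH(n-d)\otimes\cA(m)\bigr)$ by Example~\ref{avsa}, where $n=\dim\gg'_0$, $2m=\dim\gg'_1$, $d=\dim\gg$, after relabelling the $m$ pairs of free fermions as a symplectic fermion algebra $\cA(m)$ and splitting off the $d$ Heisenberg generators that limit the image of $V_k(\gg,B)$. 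Thus $\tilde{\cA}\cong\cH(n-d)\otimes\cA(m)$, a free field algebra. Condition (5) is the only mildly delicate point: one must check that $L^\gg_0$ acts on any highest-weight $V_k(\gg,B)$-submodule with finite-dimensional generalized eigenspaces for generic $k$; but for $\cA_k=V_k(\gg',B')$ this grading refines the conformal weight grading (up to the shift by Casimir eigenvalues in~\eqref{cas}), which already has finite-dimensional components, so (5) is automatic. Finally $G\subset\Aut(\cA_\infty)$ acts linearly on the generators $\alpha^{\xi},e^{\eta^\pm}$ coming from $\gg'$, preserving the Heisenberg and symplectic-fermion factors separately, so it preserves each tensor factor of $\tilde{\cA}$, and Corollary~\ref{maincori} gives the first claim.

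For $\cA_k = V_{k-l}(\gg',B')\otimes\cF$ with $\cF$ a free field algebra receiving $V_l(\gg,B)\to\cF$, the diagonal map $V_k(\gg,B)\to\cA_k$ is $X^\xi\mapsto X^\xi\otimes 1 + 1\otimes\phi(X^\xi)$; the deformable family is the tensor product of the one for $V_{k-l}(\gg',B')$ (note the shift by the fixed constant $l$ is harmless: $\kappa^2 = k-l$ still goes to infinity with $k$) with the constant family $\cF$, so $\cA_\infty \cong \bigl(\cH(n)\otimes\cF(m)\bigr)\otimes\cF$, and after splitting off the diagonal $\cH(d)$ we get $\tilde{\cA}\cong\cH(n-d)\otimes\cA(m)\otimes\cF$. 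Here $\lim_{k\to\infty}L^\gg = L^{\cH}$ is the rank-$d$ Heisenberg Virasoro built from the diagonal generators $\alpha^{\xi_i}\otimes 1 + 1\otimes\phi(\alpha^{\xi_i})$; the hypothesis that $G$ preserves the tensor factors of $\cF$ is exactly what is needed to conclude $G$ preserves each tensor factor of $\tilde{\cA}$. In the third case $\cA_k = V_{k-l}(\gg',B')\otimes V_l(\gg'',B'')$ the identical analysis, now using the deformable family in the $k$-variable with $l$ held generic, gives $\tilde{\cA}\cong\cH(n-d)\otimes\cA(m)\otimes V_l(\gg'',B'')$; since $l$ is generic, $V_l(\gg'',B'')$ itself is (by the theorem just before this corollary, applied with the trivial group) a tensor factor to which orbifold finite generation applies, and $G$ acts on it through $\gg\hookrightarrow\gg''$, preserving it as a single tensor factor. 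The main obstacle in writing this up cleanly is not any deep point but the uniform verification of conditions (1)--(5), and in particular (5), across all three families simultaneously; once one establishes (5) by the Casimir-shift argument sketched above — noting that on a highest-weight $V_k(\gg,B)$-submodule the $L^\gg_0$-grading differs from the honest conformal grading only by a scalar determined by the $\gg$-type, hence inherits finite-dimensional components — everything else is a direct appeal to Example~\ref{avsa}, Corollary~\ref{passagecor}, and Corollary~\ref{maincori}.
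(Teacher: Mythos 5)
Your proposal is correct and follows essentially the same route as the paper: the paper's (implicit) proof of this corollary consists exactly of checking that the three families $\cA_k$ are good, identifying $\tilde{\cA}$ as $\cH(n-d)\otimes\cA(m)$, $\cH(n-d)\otimes\cA(m)\otimes\cF$, and $\cH(n-d)\otimes\cA(m)\otimes V_l(\gg'',B'')$ respectively (as stated in the paragraph preceding Lemma \ref{lem:characterizationofck}), and then invoking Corollary \ref{maincori} together with the orbifold theorems of Sections \ref{sect:freeorbifold} and \ref{sect:affineorbifold}. The only caveat is terminological: the odd generators in the $k\to\infty$ limit of Example \ref{avsa} have second-order poles and hence form a symplectic fermion algebra $\cA(m)$ rather than a free fermion algebra, so your phrase about ``relabelling free fermions'' should be read as identifying the limit directly as $\cA(m)$, which is what your final expressions for $\tilde{\cA}$ correctly do.
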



\section{Some examples} \label{sect:examples}
To illustrate the constructive nature of our results, this section is devoted to finding minimal strong finite generating sets for $\text{Com}(V^k(\gg,B), \cA^k)$ in some concrete examples. By abuse of notation, we shall denote $\text{Com}(V^k(\gg,B), \cA^k)$ by $\cC^k$, since this equality holds generically.

\begin{example} \label{ex:BH1} 
Let $\gg = \gs\gp_{2n}$ and let $\cA^k = V^{k+1/2} (\gs\gp_{2n})\otimes \cS(n)$. Using the map $V^{-1/2}(\gs\gp_{2n}) \ra \cS(n)$ given by \eqref{spembedding}, we have the diagonal map $V^k(\gs\gp_{2n}) \rightarrow \cA^k$. Clearly $$\cC^k = \text{Com}(V^k(\gs\gp_{2n}),\cA^k)$$ satisfies $\cC^{\infty} \cong \cS(n)^{\text{Sp}(2n)}$ which is of type $\cW(2,4,\dots, 2n^2 + 4n)$ by Theorem 9.4 of \cite{LV}. It follows from Corollary \ref{maincorii} that for generic values of $k$, $\cC^k$ is of type $\cW(2,4,\dots, 2n^2 + 4n)$. 

It is well known \cite{KWY} that $\cS(n)^{\text{Sp}(2n)} \cong L_{-1/2}(\gs\gp_{2n})^{\text{Sp}(2n)}$ where $L_{-1/2}(\gs\gp_{2n})$ denotes the irreducible quotient of $V^{-1/2}(\gs\gp_{2n})$. We obtain the following result, which was conjectured by Blumenhagen, Eholzer, Honecker, Hornfeck, and Hubel (see Table 7 of \cite{B-H}).
\begin{cor} For $\cA^k = V^{k+1/2} (\gs\gp_{2n})\otimes L_{-1/2}(\gs\gp_{2n})$, $\cC^k = \text{Com}(V^k(\gs\gp_{2n}), \cA^k)$ is of type $\cW(2,4,\dots, 2n^2 + 4n)$ for generic values of $k$.\end{cor}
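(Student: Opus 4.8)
The plan is to combine the deformable-family machinery of Sections~\ref{deformation} and~\ref{sect:mainresult} with the invariant-theoretic identity $L_{-1/2}(\gs\gp_{2n})^{Sp(2n)}\cong\cS(n)^{Sp(2n)}$ quoted above from \cite{KWY}, so as to reduce the statement to Example~\ref{ex:BH1}. First I would record that $\cA_k=V_{k+1/2}(\gs\gp_{2n})\otimes L_{-1/2}(\gs\gp_{2n})$ is good: in the notation of the paragraph preceding Lemma~\ref{lem:characterizationofck} this is the case $\cA_k=V_{k-l}(\gg',B')\otimes V_l(\gg'',B'')$ with $\gg=\gg'=\gg''=\gs\gp_{2n}$, $B=B'=B''$ the standard form and $l=-1/2$, followed by replacing the second tensor factor by its simple quotient, and both operations preserve goodness. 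Since $\gg'=\gg''=\gg$ is purely even, $\dim\gg'-\dim\gg=0$ and $\dim\gg'_1=0$, so the auxiliary algebra in the ``good'' axioms collapses to $\tilde{\cA}\cong L_{-1/2}(\gs\gp_{2n})$, with $G=Sp(2n)$; the theorem identifying $\lim_{k\ra\infty}\cC_k$ with $\tilde{\cA}^G$ then yields $\cC_\infty:=\lim_{k\ra\infty}\cC_k\cong L_{-1/2}(\gs\gp_{2n})^{Sp(2n)}$.

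By \cite{KWY} this vertex algebra is isomorphic to $\cS(n)^{Sp(2n)}$, which by Theorem~9.4 of \cite{LV} is of type $\cW(2,4,\dots,2n^2+4n)$. From here I would argue exactly as in Example~\ref{ex:BH1}: Corollary~\ref{passagecor} together with the minimality proposition of Section~\ref{deformation} show that $\cC_k$ is of type $\cW(2,4,\dots,2n^2+4n)$ for generic $k$. A cleaner alternative is to note that $\tau$ of~\eqref{spembedding} induces a vertex algebra embedding $\cA_k\hookrightarrow\hat{\cA}_k:=V_{k+1/2}(\gs\gp_{2n})\otimes\cS(n)$ compatible with the diagonal maps from $V_k(\gs\gp_{2n})$, whence $\cC_k\subseteq\hat{\cC}_k:=\text{Com}(V_k(\gs\gp_{2n}),\hat{\cA}_k)$; for generic $k$ each of $\cC_k$ and $\hat{\cC}_k$ has graded dimension equal to that of its $\kappa\ra\infty$ limit, and those limits are $L_{-1/2}(\gs\gp_{2n})^{Sp(2n)}$ and $\cS(n)^{Sp(2n)}$, which coincide by \cite{KWY}, so $\cC_k=\hat{\cC}_k$ for generic $k$ and the corollary becomes precisely Example~\ref{ex:BH1} (whose coset is this $\hat{\cC}_k$).

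The only genuinely nontrivial ingredient is the identity $L_{-1/2}(\gs\gp_{2n})^{Sp(2n)}\cong\cS(n)^{Sp(2n)}$ of \cite{KWY}, and I expect that to be where the real content lies: without it Corollary~\ref{maincori} does not apply directly, since $\tilde{\cA}=L_{-1/2}(\gs\gp_{2n})$ is not itself a tensor product of free field and affine vertex algebras; and for the second route one must additionally check that this identity is compatible with the graded-dimension bookkeeping of the deformable families underlying $\cC_k$ and $\hat{\cC}_k$. Everything else is a routine application of the machinery already developed.
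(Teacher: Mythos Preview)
Your first route is exactly the paper's argument: the paper records (implicitly) that $\cA_k$ is good with $\tilde{\cA}=L_{-1/2}(\gs\gp_{2n})$, invokes the identification $L_{-1/2}(\gs\gp_{2n})^{Sp(2n)}\cong\cS(n)^{Sp(2n)}$ from \cite{KWY}, and then appeals to the already-established type of $\cS(n)^{Sp(2n)}$ together with the deformable-family machinery. Your second ``cleaner alternative'' via the embedding into $\hat{\cA}_k$ is not used in the paper, but it is a harmless repackaging of the same ingredients.
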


\begin{example}
Let $\gg = \gs\gp_{2n}$ and $\cA^k = V^k(\go\gs\gp(1|2n))$. Then $\cC^k = \text{Com}(V^k(\gs\gp_{2n}), V^k(\go\gs\gp(1|2n))$ satisfies $\lim_{k\ra \infty} \cC^k \cong  \cA(n)^{\text{Sp}(2n)}$. Since $\cA(n)^{\text{Sp}(2n)}$ is of type $\cW(2,4,\dots, 2n)$ by Theorem 3.11 of \cite{CLII}, we obtain

\begin{cor} \label{deformsymferm} $\cC^k = \text{Com}(V^k(\gs\gp_{2n}), V^k(\go\gs\gp(1|2n))$ is of type $\cW(2,4,\dots, 2n)$ for generic $k$. \end{cor}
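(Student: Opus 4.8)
The plan is to obtain this corollary as a direct instance of the machinery of Sections~\ref{deformation}--\ref{sect:mainresult}, together with a short argument for minimality. Set $\gg = \gs\gp_{2n}$ with $B$ the restriction of the standard trace form, $\gg' = \go\gs\gp(1|2n)$, and $B'$ the nondegenerate invariant supersymmetric form on $\gg'$ extending $B$; note that $\gg = \gg'_0$ is exactly the even part of $\gg'$, and that $\gg'_1 \cong \mathbb{C}^{2n}$ is the standard $\gs\gp_{2n}$-module. First I would check that $\cA_k = V_k(\gg',B')$ is good in the sense of the five axioms of Section~\ref{sect:mainresult}: the deformable family is the algebra $\cV$ of Example~\ref{avsa} attached to $(\gg',B')$ with $\kappa^2 = k$, so $\cA_k = \cV/(\kappa - \sqrt{k})$; axioms (2), (3), (5) hold because $V_k(\gg')$ has finite-dimensional conformal weight spaces on which $\gg$ acts, and this action integrates to $G = Sp(2n)$ since $\gs\gp_{2n}$ is semisimple; and axiom (4) holds with
$$\cA_\infty = \lim_{k\ra\infty}\cA_k \;\cong\; \cH(\dim\gs\gp_{2n})\otimes\cA(n),$$
because the general description $\tilde{\cA}\cong\cH(\dim\gg'_0 - \dim\gg)\otimes\cA(\tfrac12\dim\gg'_1)$ collapses to $\tilde{\cA}\cong\cA(n)$ when $\gg = \gg'_0$. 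The residual action of $G$ on the factor $\cA(n)$ is the standard action of $Sp(2n)$ by automorphisms of the symplectic fermion algebra.

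Given this, Lemma~\ref{lem:characterizationofck} applies ($\gg$ is simple, hence reductive, and $B$ nondegenerate), and the Theorem preceding Corollary~\ref{maincori} yields
$$\lim_{k\ra\infty}\cC_k \;\cong\; \tilde{\cA}^G = \cA(n)^{Sp(2n)}.$$
By Theorem~3.11 of \cite{CLII}, $\cA(n)^{Sp(2n)}$ is of type $\cW(2,4,\dots,2n)$, with minimal strong generators $w^0, w^2,\dots,w^{2n-2}$ of weights $2,4,\dots,2n$. Since $\cC_k$ is the quotient by $(\kappa - \sqrt{k})$ of the deformable family $\cC = \text{Com}(\cV,\cA)$ and $\cC_\infty = \lim_{k\ra\infty}\cC_k$, Lemma~\ref{passage} and Corollary~\ref{passagecor} lift this $n$-element generating set to a strong generating set of $\cC_k$ with one generator in each of the weights $2,4,\dots,2n$, valid for all $k$ outside an at most countable exceptional set --- that is, for generic $k$. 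In particular $\cC_k$ is strongly generated by elements in weights $2,4,\dots,2n$.

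It remains to see that this set is \emph{minimal}, so that $\cC_k$ is genuinely of type $\cW(2,4,\dots,2n)$ for generic $k$. For this I would invoke the minimality criterion of Section~\ref{deformation} (the Proposition there): it suffices that $\cA(n)^{Sp(2n)}$ carries no nontrivial normally ordered polynomial relation among $w^0,\dots,w^{2n-2}$ and their derivatives of weight below $2n+1$. This is exactly the point at which Weyl's second fundamental theorem for the symplectic group enters, as in the proof of Theorem~3.11 of \cite{CLII}: passing to $\text{gr}$, the invariant ring of $\bigwedge\bigoplus_{j\geq 0}(V_j\oplus V_j^*)$ under $Sp(2n)$ has its ideal of relations generated in degree $n+1$ in the quadratic invariants, hence in weight at least $2n+2$, so there are no relations of weight $\leq 2n$. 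Granting this, the criterion shows the lifted set is a minimal strong generating set of $\cC$, and hence, for generic $k$, of $\cC_k$.

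The constructive steps above are routine once the apparatus of the earlier sections is in place; the only real content is the minimality input --- that the first relation in $\cA(n)^{Sp(2n)}$ lies strictly above weight $2n$ --- which is precisely the second-fundamental-theorem computation underlying the type of the symplectic fermion orbifold. I would also note that, unlike some examples in the paper that descend to a simple quotient, no simple-quotient subtlety arises here, since $\cA_k = V_k(\go\gs\gp(1|2n))$ is already the universal affine vertex superalgebra; the statement is therefore clean, and the only excluded values of $k$ are the at most countable set produced by Lemma~\ref{passage}.
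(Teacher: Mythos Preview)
Your argument is correct and follows the paper's approach essentially verbatim: identify $\lim_{k\to\infty}\cC_k\cong\cA(n)^{Sp(2n)}$ via the machinery of Sections~\ref{deformation}--\ref{sect:mainresult}, cite Theorem~3.11 of \cite{CLII} for the type, and lift with Lemma~\ref{passage}/Corollary~\ref{passagecor}. The one small divergence is in the minimality step: rather than your second-fundamental-theorem bound on the weight of the first relation, the paper simply invokes Corollary~5.7 of \cite{CLII}, which says $\cA(n)^{Sp(2n)}$ is \emph{freely generated} (no normally ordered relations whatsoever), so the hypothesis of the minimality Proposition in Section~\ref{deformation} is satisfied vacuously and one gets the stronger conclusion that $\cC_k$ itself is freely generated for generic $k$.
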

In fact, by Corollary 5.7 of \cite{CLII}, $\cA(n)^{\text{Sp}(2n)}$ is {\it freely generated}; there are no nontrivial normally ordered polynomial relations among the generators and their derivatives. It follows that $\cC^k$ is freely generated for generic values of $k$. \end{example}
\end{example}

\begin{example} \label{osp} 
Let $\gg = \gs\gp_{2n}$ and $\cA = V^{k+1/2}(\go\gs\gp(1|2n) \otimes \cS(n))$. Then $$\cC^k = \text{Com}(V^k(\gs\gp_{2n}), V^{k-1/2}(\go\gs\gp(1|2n) \otimes \cS(n)))$$ satisfies $\lim_{k\ra \infty} \cC^k \cong (\cA(n) \otimes \cS(n))^{\text{Sp}(2n)}$. 

\begin{lemma} \label{asptensorinv} $(\cA(n) \otimes \cS(n))^{\text{Sp}(2n)}$ has the following minimal strong generating set:
\begin{equation}\label{newgenomega} \begin{split} & j^{2k}  = \frac{1}{2} \big(\sum_{i=1}^n : e^i \partial^{2k} f^i: + :(\partial^{2k} e^i ) f^i:\big) , \qquad 0\leq k \leq n-1, \\  & w^{2k+1}  = \frac{1}{2} \big(\sum_{i=1}^n :\beta^i \partial^{2k+1} \gamma^i: - :(\partial^{2k+1} \beta^i) \gamma^i:\big),\qquad 0\leq k \leq n-1, \\
& \mu^k = \frac{1}{2}\big(\sum_{i=1}^n  : \beta^i \partial^k f^i: - : \gamma^i \partial^k e^i: \big), \qquad 0\leq k \leq 2n-1. \end{split} \end{equation}
In particular, $(\cA(n)\otimes \cS(n))^{\text{Sp}(2n)}$ has a minimal strong generating set consisting of even generators in weights $2,2,4,4,\dots, 2n,2n$ and odd generators in weights $\frac{3}{2}, \frac{5}{2},\dots, \frac{4n+1}{2}$. \end{lemma}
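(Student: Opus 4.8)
The strategy is to exploit the classical invariant theory of $Sp(2n)$ acting on the associated graded ring, then lift generators to the vertex algebra via the reconstruction property. First I would set up the associated graded picture: by the filtration discussion in Section 2, $\text{gr}(\cA(n)\otimes\cS(n))^{Sp(2n)} \cong \big((\bigwedge \bigoplus_{j\geq 0}(E_j \oplus E^*_j)) \otimes (\text{Sym}\bigoplus_{j\geq 0}(V_j\oplus V^*_j))\big)^{Sp(2n)}$, where $E_j, V_j \cong \mathbb{C}^{2n}$ carry the standard $Sp(2n)$-module structure (and $E^*_j \cong E_j$, $V^*_j \cong V_j$ as $Sp(2n)$-modules since the standard representation is self-dual). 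By the $Sp$-analogue of Weyl's first fundamental theorem (the $\otimes$-mixed version mentioned after Theorem \ref{weylfinite}), this ring of invariants is generated by the quadratics obtained by contracting pairs of vectors with the symplectic form $\omega$: the even–even contractions among the $V$'s, the odd–odd contractions among the $E$'s, and the mixed odd–odd-across-the-two-factors contractions between $E$'s and $V$'s. Tracking the $\partial$-ring structure ($\partial$ sends index $j$ to $j+1$), each such quadratic together with all its derivatives is generated by a single generator, and a symmetry/antisymmetry count in the indices shows which derivatives are forced to vanish or coincide — this is exactly what pins down the ranges of $k$ in the three families and which parity of $\partial^k$ survives in each family.

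Next I would identify the leading symbols. The candidates $j^{2k}$, $w^{2k+1}$, $\mu^k$ in \eqref{newgenomega} are, up to lower filtration degree, precisely the symplectic contractions $\sum_i \omega(e,\partial^{2k}f)$-type, $\sum_i \omega(\beta,\partial^{2k+1}\gamma)$-type, and $\sum_i(\omega(\beta,\partial^k f) - \omega(\gamma,\partial^k e))$-type quadratics; the parities $2k$ versus $2k+1$ arise because the symmetric part of the pairing is a total derivative (so can be discarded or produces lower-degree terms after normal ordering) while the antisymmetric part is the genuine new generator. Having matched leading terms with a generating set of the $\partial$-ring $\text{gr}(\cA(n)\otimes\cS(n))^{Sp(2n)}$, the reconstruction property \cite{LL} quoted in Section 2 immediately gives that $\{j^{2k}, w^{2k+1}, \mu^k\}$ strongly generates $(\cA(n)\otimes\cS(n))^{Sp(2n)}$.

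For minimality I would argue that no generator can be decoupled, i.e., written as a normally ordered polynomial in the others and their derivatives. This follows from Proposition stated in Section 3 (the one just before "In our main examples..."): it suffices to check that there are no normally ordered polynomial relations among the generators and their derivatives below the relevant weight. That in turn reduces, via the $\text{gr}$ functor, to the statement that the analogous polynomial relations do not exist in the classical invariant ring below that weight — which is governed by the $Sp(2n)$ second fundamental theorem of invariant theory (relations are generated by Pfaffian-type expressions, all of high degree/weight). Concretely, the even generators sit in weights $2,4,\dots,2n$ (each appearing once from $\cA(n)$ via $j^{2k}$ and, implicitly, matching the known type $\cW(2,4,\dots,2n)$ of $\cA(n)^{Sp(2n)}$ and $\cW(2,4,\dots,2n)$-structure entangled with $\cS(n)$), the odd generators $\mu^k$ in weights $\tfrac32,\tfrac52,\dots,\tfrac{4n+1}{2}$, and the $w^{2k+1}$'s supply the remaining even generators; a dimension count in low weights (comparing $\dim(\cA(n)\otimes\cS(n))^{Sp(2n)}[d]$ computed from the $Sp(2n)$-decomposition against the number of normally ordered monomials in the proposed generators) rules out any decoupling relation in the weight range where the generators live.

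The main obstacle I anticipate is the bookkeeping in the second step: correctly determining, for each of the three quadratic invariants, which derivative index ranges are nonredundant once the symmetry of the contraction and the graded-symmetry of the odd generators are taken into account, and checking that the "symmetric = total derivative" reductions do not secretly introduce a generator one has not accounted for. The mixed family $\mu^k$ is the delicate one, since $\beta\partial^k f$ and $\gamma\partial^k e$ are individually not $Sp(2n)$-invariant and only the displayed combination is; verifying that this combination is primary-ish enough that its normal-ordered form has leading symbol exactly the classical mixed invariant — with no unexpected lower-order $Sp(2n)$-invariant tail that would alter the generating set — is where the real work lies. Everything else is either a direct appeal to Weyl's theorems and the reconstruction lemma, or a finite low-weight computation.
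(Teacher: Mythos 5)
There is a genuine gap at the heart of your second step. Passing to the associated graded ring and applying Weyl's first fundamental theorem for $Sp(2n)$ gives, as a strong generating set, the full two-parameter family of quadratics $:\partial^a e^i\partial^b f^i:$-type, $:\partial^a\beta^i\partial^b\gamma^i:$-type, and the mixed $:\partial^a\beta^i\partial^b f^i: - :\partial^a\gamma^i\partial^b e^i:$-type invariants for \emph{all} $a,b\geq 0$. The symmetry/antisymmetry of the contractions together with the derivative relations $\partial q_{a,b}=q_{a+1,b}+q_{a,b+1}$ only reduce this to the one-parameter families $\{j^{2k}\}_{k\geq 0}$, $\{w^{2k+1}\}_{k\geq 0}$, $\{\mu^k\}_{k\geq 0}$ — this pins down the \emph{parity}, but it is still an infinite set. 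Your claim that this bookkeeping "pins down the ranges of $k$" is false: classically, the quadratic invariants $q_{0,m}$ for large $m$ are irredundant generators of the $\partial$-ring (they have polynomial degree $2$ and cannot be products of other invariants), so no argument in $\text{gr}(\cA(n)\otimes\cS(n))^{Sp(2n)}$ can truncate the list at $k\leq n-1$ (resp.\ $k\leq 2n-1$). The truncation is a purely quantum phenomenon: one must exhibit \emph{decoupling relations} in the vertex algebra, obtained by normally ordering the relations of minimal weight furnished by the second fundamental theorem for $Sp(2n)$; the classical leading term of such an expression vanishes, and the surviving lower-filtration "correction" expresses each higher generator as a normally ordered polynomial in the finitely many displayed ones. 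This is exactly what the paper does ("we use the relation of minimal weight to construct decoupling relations eliminating all but the set \eqref{newgenomega}", following Theorem 7.1 of \cite{CLI}), and it is the step your proposal omits entirely.

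A secondary confusion: you invoke the second fundamental theorem only in your minimality argument, to show relations do not exist below a certain weight. That part is in the right spirit (cf.\ the Proposition in Section \ref{deformation}). But the second fundamental theorem plays a dual role here — the very \emph{existence} of the first relation, in the weight just above where the proposed generators stop, is what makes the decoupling possible and hence what proves strong generation by the finite set in the first place. Without producing those decoupling relations, your argument establishes only that the infinite set $\{j^{2k},w^{2k+1},\mu^k\ |\ k\geq 0\}$ strongly generates, which is strictly weaker than the Lemma.
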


\begin{proof} The argument is similar to the proof of Theorem 7.1 of \cite{CLI}, and some details are omitted. By passing to the associated graded algebra and applying Weyl's first fundamental theorem of invariant theory for $\text{Sp}(2n)$, we obtain the following strong generating set for $(\cA(n) \otimes \cS(n))^{\text{Sp}(2n)}$:
$$ \frac{1}{2} \big(\sum_{i=1}^n :( \partial^a e^i) \partial^{b} f^i: + :(\partial^{b} e^i ) \partial^a f^i:\big) , \qquad a,b\geq 0, $$ $$ \frac{1}{2} \big(\sum_{i=1}^n :(\partial^a \beta^i) \partial^{b} \gamma^i: - :(\partial^{b} \beta^i) \partial^a \gamma^i:\big),\qquad a,b\geq 0, $$ $$\frac{1}{2}\big(\sum_{i=1}^n  : (\partial^a \beta^i) \partial^b f^i: - :(\partial^a  \gamma^i) \partial^b e^i: \big), \qquad a,b \geq 0. $$
As in Theorem 7.1 of \cite{CLI}, we use the relation of minimal weight to construct decoupling relations eliminating all but the set \eqref{newgenomega}.
\end{proof}

\begin{cor}  For generic values of $k$, $\cC^k$ has a minimal strong generating set consisting of even generators in weights $2,2,4,4,\dots, 2n,2n$ and odd generators in weights $\frac{3}{2}, \frac{5}{2},\dots, \frac{4n+1}{2}$.
\end{cor}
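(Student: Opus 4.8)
The plan is to deduce the corollary directly from Lemma \ref{asptensorinv} together with the machinery of deformable families already set up in the excerpt. The starting point is the identification $\lim_{k\ra \infty}\cC_k \cong (\cA(n)\otimes \cS(n))^{Sp(2n)}$, which was observed just before the lemma; combined with the fact that $\cA_k = V_{k-1/2}(\go\gs\gp(1|2n))\otimes \cS(n)$ is good (a routine check of conditions (1)--(5), using that $\go\gs\gp(1|2n)$ contains $\gs\gp_{2n}$ and that $V_{-1/2}(\gs\gp_{2n})\ra \cS(n)$ integrates to $Sp(2n)$), this shows $\cC_k$ is a quotient of a deformable family $\cC$ with $\cC_{\infty}$ the orbifold in question. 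Then Lemma \ref{asptensorinv} furnishes an explicit \emph{minimal} strong generating set $U$ for $\cC_{\infty}$, namely the $j^{2k}$, $w^{2k+1}$, and $\mu^k$ listed in \eqref{newgenomega}.

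The first step is then to invoke Corollary \ref{passagecor} (via Lemma \ref{passage}): lifting $U$ to the corresponding subset $T\subset \cC$, there is an at most countable set $S\subset \mathbb{C}$ outside of which $\cC_k$ is strongly generated by the image of $T$. This already gives, for generic $k$, a strong generating set of the same cardinality and the same weights, i.e. even generators in weights $2,2,4,4,\dots,2n,2n$ and odd generators in weights $\frac{3}{2},\frac{5}{2},\dots,\frac{4n+1}{2}$. The remaining issue is \emph{minimality} of this generating set for generic $k$. For this I would apply the Proposition stated in Section \ref{deformation} (the one asserting that if $U$ is minimal for $\cB_\infty$, $\mathrm{wt}(\alpha_i)<N$ for all $i$, and there are no normally ordered polynomial relations among the $\alpha_i$ and their derivatives of weight $<N$, then $T$ is minimal for $\cB$). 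Here $N = 2n+1$ works since all generators in $U$ have weight $\le 2n$. So what must be checked is that $(\cA(n)\otimes \cS(n))^{Sp(2n)}$ has no normally ordered polynomial relations among the generators \eqref{newgenomega} and their derivatives in weight less than $2n+1$.

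The main obstacle is precisely this relation-free statement below weight $2n+1$, and the natural route is Weyl's second fundamental theorem of invariant theory for $Sp_{2n}$ applied to the associated graded ring, exactly as the paragraph following the Proposition anticipates. Passing to $\mathrm{gr}$, the orbifold becomes $\big((\mathrm{Sym}\bigoplus_j W_j)\otimes(\bigwedge\bigoplus_j \bar W_j)\big)^{Sp(2n)}$ with $W_j, \bar W_j\cong \mathbb{C}^{2n}$, whose generators are the quadratics coming from the symplectic form and whose relations (by the second fundamental theorem) are generated by the Pfaffian-type relations, the lowest of which involves $2n+2$ of the linear variables and hence has degree $n+1$; since each generator has weight $\ge 2$ in the associated graded picture one must track weights carefully, but the upshot is that the first relation occurs in weight $\ge 2n+2 > 2n$. (This is the same mechanism as in Theorem 7.1 of \cite{CLI} and in Corollary 5.7 of \cite{CLII}.) Combining the relation-free conclusion with the Proposition gives minimality of $T$ for $\cC$, and hence, by Corollary \ref{passagecor} applied with possibly enlarged exceptional set $S$, minimality of the induced strong generating set for $\cC_k$ for all generic $k$. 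I would then simply note that the resulting weights are those claimed, completing the proof.
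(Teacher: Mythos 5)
Your overall strategy --- identify $\lim_{k\ra\infty}\cC_k$ with $(\cA(n)\otimes\cS(n))^{Sp(2n)}$, quote Lemma \ref{asptensorinv} for the minimal strong generating set of the limit, and transfer to generic $k$ via Lemma \ref{passage} and Corollary \ref{passagecor} --- is exactly the route the paper intends (the paper states the corollary with no further argument, treating it as immediate from the Lemma and Corollary \ref{maincori}). The strong generation part of your argument, and the resulting list of weights, is correct.

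The gap is in your minimality argument. To apply the Proposition of Section \ref{deformation} with $N=2n+1$ you must verify that $(\cA(n)\otimes\cS(n))^{Sp(2n)}$ has \emph{no} normally ordered polynomial relations among the generators \eqref{newgenomega} and their derivatives in weight below $N$, and your justification of this fails on two counts. First, the weight count is wrong: the generators do not all have weight $\geq 2$ (already $\mu^0$ has weight $\tfrac32$), and a degree-$(n+1)$ Pfaffian-type relation in quadratics of weight $\geq \tfrac32$ is only bounded below in weight by $\tfrac{3(n+1)}{2}$, which is strictly less than $2n+1$ once $n\geq 2$; the asserted bound ``$\geq 2n+2$'' does not follow. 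Second, and more decisively, the relation-free statement itself is false for $n\geq 2$: since $\mu^0$ is odd, quasi-commutativity of the Wick product gives $2:\mu^0\mu^0:\ = \sum_{j\geq 0}\tfrac{(-1)^j}{(j+1)!}\partial^{j+1}\big(\mu^0_{(j)}\mu^0\big)$, and the right side is a normally ordered polynomial in $j^0$, $w^1$ and their derivatives; this is a nontrivial normally ordered relation of weight $3$, which lies below the required threshold $N>\tfrac{4n+1}{2}$ whenever $n\geq 2$. So the Proposition cannot be invoked as stated except for $n=1$. What is actually needed is the finer statement implicit in the Proposition's proof: for each generator $t_j$, there is no nontrivial relation of weight exactly $\mathrm{wt}(t_j)$ among the \emph{remaining} generators and their derivatives (this is what rules out a decoupling relation $\lambda(k)t_j=P$ with $\lambda(k)\ra 0$). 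Verifying that, weight by weight, is the content the paper sweeps into ``the argument is similar to the proof of Theorem 7.1 of \cite{CLI},'' and it is the step your proposal does not supply.
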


Let $L = -j^0 + w^1$ denote the Virasoro element of $(\cA(n)\otimes \cS(n))^{\text{Sp}(2n)}$, which has central charge $-3n$. Then $L$ and $\mu^0$ generate a copy of the $N=1$ superconformal algebra with $c=-3n$. Similarly, for noncritical values of $k$, $L = L^{\go\gs\gp(1|2n)} - L^{\gs\gp_{2n}} + w^1$ and $\mu^0$ generate a copy of the $N=1$ algebra inside $\cC^k$.

\end{example}

\begin{example} Let $\gg = \gg\gl_n$ and $\cA^k = V^k(\gg\gl(n|1))$. In this case, $\cC^k = \text{Com}(V^k(\gg\gl_n), \cA^k)$ satisfies $\lim_{k\ra \infty} \cC^k \cong \cH(1)\otimes( \cA(n)^{\text{GL}(n)})$. By Theorem 4.3 of \cite{CLII}, $\cA(n)^{\text{GL}(n)}$ is of type $\cW(2,3,\dots, 2n+1)$ so $\cC^k$ is generically of type $\cW(1,2,3,\dots, 2n+1)$.
\end{example}

\begin{example} Let $\gg = \gg\gl_n$ and $\cA^k = V^k(\gg\gl(n|1))\otimes \cS(n)$. In this case, $\cC^k = \text{Com}(V^k(\gg\gl_n), \cA^k)$ satisfies $\lim_{k\ra \infty} \cC^k \cong \cH(1)\otimes (\cA(n)\otimes \cS(n))^{\text{GL}(n)}$. 

\begin{lemma} $(\cA(n)\otimes \cS(n))^{\text{GL}(n)}$ has the following minimal strong generating set:
$$w^k = \sum_{i=1}^n :e^i \partial^{k} f^i:,\qquad j^{k} =  \sum_{i=1}^n :\beta^i \partial^{k} \gamma ^i:,\qquad 0\leq k \leq 2n-1,$$
$$\nu^k = \sum_{i=1}^n :e^i \partial^{k} \gamma^i:,\qquad \mu^k = \sum_{i=1}^n :\beta^i \partial^{k} f ^i:,\qquad 0\leq k \leq  2n-1.$$ The even generators are in weights $1,2,2,3,3,\dots, 2n,2n, 2n+1$, and the odd generators are in weights $\frac{3}{2},\frac{3}{2},\frac{5}{2},\frac{5}{2},\dots, \frac{4n+1}{2},\frac{4n+1}{2}$. 
\end{lemma}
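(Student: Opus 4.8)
The plan is to follow the strategy of the proof of Theorem 4.3 of \cite{CLII} and of Lemma \ref{asptensorinv} above, both of which go back to Theorem 7.1 of \cite{CLI}. First I would pass to the associated graded algebra. Since the good increasing filtration on $\cA(n)\otimes\cS(n)$ is $GL(n)$-invariant,
$$\text{gr}\big((\cA(n)\otimes\cS(n))^{GL(n)}\big)\cong\big(\text{gr}(\cA(n))\otimes\text{gr}(\cS(n))\big)^{GL(n)},$$
which by \eqref{assgradfreefield} is the ring of $GL(n)$-invariants in $\big(\bigwedge\bigoplus_{j\geq 0}(V_j\oplus V_j^*)\big)\otimes\big(\text{Sym}\bigoplus_{j\geq 0}(W_j\oplus W_j^*)\big)$, where the images of $\partial^j e^i$ span copies $V_j\cong\mathbb{C}^n$ of the standard $GL_n$-module, the images of $\partial^j\beta^i$ span copies $W_j\cong\mathbb{C}^n$, and the images of $\partial^j f^i$ and $\partial^j\gamma^i$ span copies $V_j^*$ and $W_j^*$ of the dual module. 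By Weyl's first fundamental theorem of invariant theory for $GL_n$ \cite{W}, in the form of Theorem \ref{weylfinite} and its exterior-algebra analogue, this ring is generated as a $\partial$-ring by the quadratic contractions of a standard tower against a dual tower; lifting these to $\cA(n)\otimes\cS(n)$ via the reconstruction property yields the strong generating set
\begin{equation}\label{eq:glcontr} \sum_{i=1}^n :\partial^a e^i\partial^b f^i:,\quad \sum_{i=1}^n :\partial^a\beta^i\partial^b\gamma^i:,\quad \sum_{i=1}^n :\partial^a e^i\partial^b\gamma^i:,\quad \sum_{i=1}^n :\partial^a\beta^i\partial^b f^i:,\qquad a,b\geq 0. \end{equation}
Using $\partial:uv:=:(\partial u)v:+:u(\partial v):$ repeatedly, each of the four families in \eqref{eq:glcontr} can be trimmed, as a strong generating set, to its $a=0$ subfamily, namely to $\{w^k,j^k,\nu^k,\mu^k\mid k\geq 0\}$.

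Next I would run the decoupling argument exactly as in \cite{CLI,CLII}. By Weyl's second fundamental theorem for $GL_n$, the relations among the contractions in \eqref{eq:glcontr} are generated by the $(n+1)\times(n+1)$ minors of the matrix of all contractions, and a weight count shows that the lightest such minor has weight $\frac{1}{2}(n+1)(n+2)$ and mixes all four families (for the $n+1$ lightest ``vectors'' are an interleaving of $\beta$-jets and $e$-jets, and likewise for the covectors). Each classical relation lifts to a normally ordered relation in $(\cA(n)\otimes\cS(n))^{GL(n)}$ with the same leading term; starting from the relation of minimal weight and then repeatedly applying $\partial$ and taking circle products with the generators already obtained, one manufactures decoupling relations that express $w^k$, $j^k$, $\nu^k$, and $\mu^k$ for all $k\geq 2n$, together with their derivatives, as normally ordered polynomials in $\{w^k,j^k,\nu^k,\mu^k\mid 0\leq k\leq 2n-1\}$. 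Hence this finite set strongly generates, and its elements occur in the stated weights. For minimality, note that the classical invariant ring has no relation among these generators and their derivatives in any weight below $\frac{1}{2}(n+1)(n+2)$; since this bound is strictly greater than $2n+1$ for $n\geq 2$, no generator can be the leading term of a decoupling relation and the set is minimal, while for $n=1$ the one generator of weight $2n+1$ that meets the bound requires a short direct verification, aided by Theorem 4.3 of \cite{CLII} (which gives $\cA(n)^{GL(n)}$ of type $\cW(2,3,\dots,2n+1)$) and the known structure of $\cS(n)^{GL(n)}$.

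The step I expect to be the main obstacle is the decoupling. Because the $GL_n$ second fundamental theorem couples $e,f,\beta,\gamma$ inside a single minor, the bootstrap must be carried out simultaneously in all four families, and confirming that it terminates at index exactly $2n-1$ in each of them---neither sooner nor later---requires careful tracking of the weights of the minimal minor relation and of the normal-ordering corrections it acquires upon quantization. This is the same mechanism that appears in the $Sp(2n)$ case of Lemma \ref{asptensorinv} and in \cite{CLI}, but with twice as many families and with the half-integral weights of $\nu^k$ and $\mu^k$ to reconcile against the integral weights of $w^k$ and $j^k$, so the combinatorial bookkeeping is the most delicate part of the argument.
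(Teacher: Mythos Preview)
Your proposal is correct and follows essentially the same route as the paper: the paper's proof is the single sentence ``The argument is the same as the proof of Lemma \ref{asptensorinv},'' and what you have written is precisely a detailed expansion of that argument (associated graded plus Weyl's first fundamental theorem for $GL_n$ to obtain the quadratic contractions, then the decoupling mechanism of Theorem 7.1 of \cite{CLI} driven by the minimal $(n{+}1)\times(n{+}1)$-minor relation from the second fundamental theorem). Your computation of the minimal relation weight $\tfrac{1}{2}(n{+}1)(n{+}2)$ and the resulting minimality argument for $n\geq 2$, with the $n=1$ case handled separately, are exactly in the spirit of the references the paper invokes.
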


\begin{proof} The argument is the same as the proof of Lemma \ref{asptensorinv}.
\end{proof}
Therefore $\cC^k$ has a minimal strong generating set with even generators in weights $1,1,2,2,3,3,\dots, 2n,2n, 2n+1$, and odd generators in weights $\frac{3}{2},\frac{3}{2},\frac{5}{2},\frac{5}{2},\dots, \frac{4n+1}{2},\frac{4n+1}{2}$, for generic values of $k$. Note that $(\cA(n)\otimes \cS(n))^{\text{GL}(n)}$ has the following $N=2$ superconformal structure of central charge $-3n$.
\begin{equation} \label{N=2structure} L = j^1 - \frac{1}{2}\partial j^0 - w^0, \qquad F = j^0 ,\qquad  G^+ = \nu^0 ,\qquad G^- = \mu^0. \end{equation} 
For noncritical values of $k$, this deforms to an $N=2$ superconformal structure on $\cC^k$ given by $$L = j^1 - \frac{1}{2}\partial j^0 + L^{\gg\gl(n|1)} - L^{\gg\gl_n} ,\qquad F= j^0,\qquad G^+ =  \sum_{i=1}^n :X^{\eta^-_i} \gamma^i:,\qquad G^- = \sum_{i=1}^n :\beta^i X^{\eta^+_i}: .$$ \end{example}

\begin{example}\label{ex:KS} Let $\gg = \gg\gl_n$ and $\cA^k = V^{k-1}(\gs\gl_{n+1}) \otimes \cE(n)$. There is a map $V^{k-1}(\gg\gl_n) \ra V^{k-1}(\gs\gl_{n+1})$ corresponding to the natural embedding $\gg\gl_n \ra \gs\gl_{n+1}$, and a homomorphism $V^1(\gg\gl_n) \ra \cE(n)$ appearing in \cite{FKRW}, so we have a diagonal homomorphism $V^k(\gg\gl_n) \ra \cA^k$. Then $\cC^k = \text{Com}(V^k(\gg\gl_n), \cA^k)$ satisfies $\lim_{k\ra \infty} \cC^k = (\cH(2n) \times \cE(n))^{\text{GL}(n)}$, where $\cH(2n)$ is the Heisenberg algebra with generators $a^1,\dots, a^n$ and $\bar{a}^1,\dots, \bar{a}^n$ satisfying $$a^i(z) \bar{a}^j(w) \sim \delta_{i,j} (z-w)^{-2},\qquad a^i(z) a^j(w) \sim 0,\qquad \bar{a}^i(z) \bar{a}^j(w) \sim 0.$$ 

\begin{lemma} $(\cH(2n) \times \cE(n))^{\text{GL}(n)}$ has a minimal strong generating set 
$$ j^k =  \sum_{i=1}^n :b^i \partial^k c^i:,\qquad w^k = \sum_{i=1}^n :a^i \partial^k \bar{a}^i: \qquad 0\leq k\leq n-1,$$
$$\nu^k = \sum_{i=1}^n :b^i \partial^k \bar{a}^i:, \qquad \mu^k = \sum_{i=1}^n :a^i \partial^k c^i:,\qquad  0\leq k\leq n-1.$$
In particular, $(\cH(2n) \times \cE(n))^{\text{GL}(n)}$ has even generators in weights $1,2,2,3,3,\dots, n, n, n+1$, and odd generators in weights $\frac{3}{2}, \frac{3}{2},\frac{5}{2},\frac{5}{2},\dots, \frac{2n+1}{2},\frac{2n+1}{2}$. \end{lemma}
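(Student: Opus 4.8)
The plan is to mirror the proof of Lemma~\ref{asptensorinv}, since the structural situation is identical: we are computing a $GL(n)$-invariant ring of a tensor product of a Heisenberg-type and a $bc$-type system, where $GL(n)$ acts on the $n$-dimensional defining representation on one "half" of each pair and on the dual on the other half. First I would pass to the associated graded algebra and invoke Weyl's first fundamental theorem of invariant theory for $GL(n)$: the invariants of finitely many copies of $\mathbb{C}^n \oplus (\mathbb{C}^n)^*$ are generated by the pairings $\langle v_a, w_b^*\rangle$. Applied here, this produces a strong generating set for $(\cH(2n)\times\cE(n))^{GL(n)}$ consisting of the four families
$$\sum_{i=1}^n :\partial^a b^i \partial^b c^i:,\quad \sum_{i=1}^n :\partial^a a^i \partial^b \bar a^i:,\quad \sum_{i=1}^n :\partial^a b^i \partial^b \bar a^i:,\quad \sum_{i=1}^n :\partial^a a^i \partial^b c^i:, \qquad a,b\geq 0.$$
(The mixed pairings $b^i$ with $\bar a^i$ and $a^i$ with $c^i$ are allowed because $GL(n)$ acts on $b,a$ via one copy of the standard representation and on $c,\bar a$ via the dual — so one must check that the four relevant spaces carry the standard or dual representation consistently, which they do by construction of the maps $V_k(\gg\gl_n)\ra V_{k-1}(\gs\gl_{n+1})$ and $V_1(\gg\gl_n)\ra\cE(n)$.)

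Next I would run the decoupling argument: reorganize each family so that only the "descendants" $\sum :e\,\partial^k f:$ type elements with $b=0$ (or more precisely, the totally symmetrized combinations appearing in the statement) are retained, and show the rest are normally ordered polynomials in these together with their derivatives. As in \cite{CLI} and Lemma~\ref{asptensorinv}, this is done by taking the relation of minimal weight among the generators and their derivatives — which exists by Weyl's \emph{second} fundamental theorem for $GL(n)$, giving the determinant-type relations once more than $n$ vectors are paired — and using it to express the highest-weight generator in each family in terms of lower ones, then iterating. This cuts each of the four infinite families down to the range $0\leq k\leq n-1$, yielding even generators $j^k, w^k$ in weights $k+1$ and odd generators $\nu^k,\mu^k$ in weights $k+\tfrac32$, for $0\leq k\leq n-1$; collecting these gives even weights $1,2,2,3,3,\dots,n,n,n+1$ and odd weights $\tfrac32,\tfrac32,\dots,\tfrac{2n+1}{2},\tfrac{2n+1}{2}$ as claimed.

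Finally, minimality: one must verify that no further decoupling relation exists in the claimed weight range, i.e.\ that there is no normally ordered polynomial relation expressing one of the listed generators through the others below its own weight. This follows because the first genuine relations among the strong generators occur only at weight high enough (governed by the second fundamental theorem — relations need at least $n+1$ vectors, hence appear only beyond the truncation bound) that they cannot be used to decouple anything in weight $\leq n+1$; this is exactly the mechanism behind Proposition~\ref{passage}'s hypothesis and the argument in \cite{CLI}. The main obstacle I anticipate is bookkeeping the \emph{mixed} generators $\nu^k,\mu^k$ correctly under the decoupling: unlike the $Sp_{2n}$ case in Lemma~\ref{asptensorinv} where symmetry forces certain parities, here the $GL(n)$ structure gives two independent copies of each weight, and one must be careful that the decoupling relations derived from the minimal-weight relation genuinely eliminate the higher mixed generators rather than merely relating them to each other. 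Checking that the truncation bound is sharp — that $k=n-1$ is really the last surviving generator in each family and not, say, $k=n$ — requires identifying the precise weight of the minimal relation, which I would do by the same explicit computation as in the proof of Lemma~\ref{asptensorinv}.
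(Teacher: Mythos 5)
Your proposal is correct and follows essentially the same route as the paper, which simply cites the proof of the $Sp(2n)$ analogue: pass to the associated graded algebra, apply Weyl's first fundamental theorem for $GL(n)$ to get the quadratic pairings $\sum_i :\partial^a x^i \partial^b y^i:$ as strong generators, then use the minimal-weight relation from the second fundamental theorem to decouple all but the listed set. (Only trivial quibble: $w^k$ has weight $k+2$, not $k+1$, since $a^i,\bar a^i$ have weight one — but your final collected list of weights is correct.)
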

\begin{proof} This is the same as the proof of Lemma \ref{asptensorinv}.
\end{proof}
Therefore $\cC^k$ has a minimal strong generating set in the same weights for generic values of $k$. Finally, $(\cH(2n) \times \cE(n))^{\text{GL}(n)}$ has an $N=2$ superconformal structure given by
\begin{equation}  L = -j^1 + \frac{1}{2}\partial j^0 - w^0, \qquad F = j^0 ,\qquad  G^+ = \nu^0 ,\qquad G^- = \mu^0, \end{equation} 
which deforms to an $N=2$ superconformal structure on $\cC^k$.

This example is called the Kazama-Suzuki coset \cite{KS} of complex projective space in the physics literature. It is conjectured \cite{I} to be a super $\cW$-algebra of $\gs\gl(n+1|n)$ corresponding to the principal nilpotent embedding of $\gs\gl_2$. In Section \ref{sect:simplecoset}, we will explicitly determine the set of nongeneric values of $k$ in the case $n=1$. As a consequence, we will describe $\text{Com}(\cH(1), L^k(\gs\gl_2) \otimes \cE(1))$ for all admissible levels $k$, and prove its rationality. \end{example}

\begin{example}\label{ex:sln} Let $\gg = \gs\gl_n$ and $\cA^k = V^{k-1}(\gs\gl_n) \otimes L_1(\gs\gl_n)$. Note that $L_1(\gs\gl_n) \cong \text{Com}(\cH, \cE(n))$ where $\cH$ is the copy of the rank one Heisenberg algebra generated by $\sum_{i=1}^n :b^i c^i:$ and $\cE(n)$ is the rank $n$ $bc$-system. Then $\cC^k = \text{Com}(V^k(\gs\gl_n), \cA^k)$ satisfies $$\lim_{k\ra \infty} \cC^k \cong L_1(\gs\gl_n)^{\text{SL}(n)} = \text{Com}(\cH,\cE(n))^{\text{SL}(n)} \cong \text{Com}(\cH, \cE(n)^{\text{SL}(n)}) = \text{Com}(\cH, \cE(n)^{\text{GL}(n)}).$$ By \cite{FKRW}, $\cE(n)^{\text{GL}(n)}\cong \cW_{1+\infty,n} \cong \cW(\gg\gl_n)$ so $\lim_{k\ra \infty} \cC^k \cong \cW(\gs\gl_n)$ and hence is of type $\cW(2,3,\dots, n)$. Therefore $\cC^k$ is generically of type $\cW(2,3,\dots, n)$. 

More generally, let $\gg$ be any simple, finite-dimensional, simply laced Lie algebra. The action of $\gg$ on $L_1(\gg)$ integrates to an action of a connected Lie group $G$ with Lie algebra $\gg$, and it is known that $L_1(\gg)^G$ is isomorphic to the $\cW$-algebra $\cW(\gg)$ associated to the principal embedding of $\gs\gl_2$ in $\gg$, with central charge $c = \text{rank}(\gg)$ \cite{BBSSI,BS,F}. Let $\cA^k = V^{k-1}(\gg) \otimes L_1(\gg)$, equipped with the diagonal embedding $V^k(\gg) \ra \cA^k$. Then $\cC^k = \text{Com}(V^k(\gg), \cA^k)$ satisfies $$\lim_{k\ra \infty} \cC^k \cong L_1(\gg)^G.$$ It follows that $\cC^k$ has strong generators in the same weights as $\cW(\gg)$ for generic values of $k$. However, the much stronger statement that $\cC^k$ is isomorphic to $\cW(\gg)$ for generic values of $k$ has now been established in \cite{ACL}.

\end{example}

\begin{example} \label{ex:BH2} Let $\gg = \gs\go_n$ and let $\cA^k = V^{k-1}(\gs\go_{n}) \otimes L_{1}(\gs\go_{n})$. We have a projection $V^1(\gs\go_n) \ra  L_{1}(\gs\go_{n})$, and a diagonal map $V^k(\gs\go_n) \ra \cA^k$. In this case we are interested not in $\cC^k = \text{Com}(V^k(\gs\go_n), \cA^k)$ but in the orbifold $(\cC^k)^{\mathbb{Z}/2\mathbb{Z}}$. Note that $\mathbb{Z}/2\mathbb{Z}$ acts on each of the vertex algebras $V^k(\gs\go_n)$, $V^{k-1}(\gs\go_{n})$ and $L_{1}(\gs\go_{n})$; the action is defined on generators by multiplication by $-1$. There is an induced action of $\mathbb{Z}/2\mathbb{Z}$ on $\cC^k$. We have isomorphisms $$\lim_{k\ra \infty} ((\cC^k)^{\mathbb{Z} / 2 \mathbb{Z}}) \cong \lim_{k\ra \infty} (\cC^k)^{\mathbb{Z} / 2 \mathbb{Z}} \cong \big(L_1(\gs\go_{n})^{\text{SO}(n)}\big)^{\mathbb{Z} / 2 \mathbb{Z}} \cong L_1(\gs\go_{n})^{\text{O}(n)} \cong \cF(n)^{\text{O}(n)}.$$ This appears as Theorems 14.2 and 14.3 of \cite{KWY} in the cases where $n$ is even and odd, respectively; in both cases, $L_1(\gs\go_{n})^{\text{SO}(n)}$ decomposes as the direct sum of $\cF(n)^{\text{O}(n)}$ and an irreducible, highest-weight $\cF(n)^{\text{O}(n)}$-module. Since $\cF(n)^{\text{O}(n)}$ is of type $\cW(2,4,\dots, 2n)$, the following result, which was conjectured in Table 7 of \cite{B-H}, is an immediate consequence.

\begin{cor} $\lim_{k\ra \infty} (\cC^k)^{\mathbb{Z} / 2 \mathbb{Z}}$ is of type $\cW(2,4,\dots, 2n)$, so $(\cC^k)^{\mathbb{Z} / 2 \mathbb{Z}}$ is of type $\cW(2,4,\dots, 2n)$ for generic values of $k$. \end{cor}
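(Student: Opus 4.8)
The plan is to derive both assertions from the chain of isomorphisms recorded just above the statement, whose final term is $\cF(n)^{O(n)}$, together with the known structure of $\cF(n)^{O(n)}$ and the deformable-family machinery of Section \ref{deformation}; essentially no new vertex-algebra computation is needed beyond what is quoted in the proof of Theorem \ref{mainfreefield}.

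For the statement about the limit: by Theorem 11.1 of \cite{LV} (recalled in the proof of Theorem \ref{mainfreefield}), $\cF(n)^{O(n)}$ is of type $\cW(2,4,\dots,2n)$, with minimal strong generating set $\{\tilde j^{2k+1}\mid 0\le k\le n-1\}$, $\tilde j^{2k+1}=-\tfrac12\sum_{i=1}^n:\phi^i\partial^{2k+1}\phi^i:$ of weight $2k+2$. Transporting this set along the isomorphism $\lim_{k\to\infty}\big((\cC_k)^{\mathbb Z/2\mathbb Z}\big)\cong\cF(n)^{O(n)}$ produces a minimal strong generating set $\{\alpha_0,\dots,\alpha_{n-1}\}$ for the limit with $\text{wt}(\alpha_k)=2k+2$, which is exactly the first claim.

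For the statement about generic $k$: I would first note that $(\cC_k)^{\mathbb Z/2\mathbb Z}$ is a quotient of a deformable family. The vertex algebra $\cA_k=V_{k-1}(\gs\go_n)\otimes L_1(\gs\go_n)$ is good — it is the quotient of the good vertex algebra $V_{k-1}(\gs\go_n)\otimes V_1(\gs\go_n)$ by the ideal generated by the maximal proper ideal of the second factor, and goodness is stable under such quotients (Section \ref{sect:mainresult}); here $\tilde{\cA}\cong L_1(\gs\go_n)$ and $G=SO(n)$. Hence the construction of Section \ref{sect:mainresult} exhibits $\cC_k$ as the $\kappa=\sqrt k$ specialization of a deformable family $\cC$ with $\lim_{k\to\infty}\cC_k\cong L_1(\gs\go_n)^{SO(n)}$. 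The $\mathbb Z/2\mathbb Z$-action preserves $\kappa$, so it descends to $\cC$, and $(\cC_k)^{\mathbb Z/2\mathbb Z}=\cC^{\mathbb Z/2\mathbb Z}/(\kappa-\sqrt k)$ with $\lim_{k\to\infty}\cC^{\mathbb Z/2\mathbb Z}\cong\big(L_1(\gs\go_n)^{SO(n)}\big)^{\mathbb Z/2\mathbb Z}\cong\cF(n)^{O(n)}$. Corollary \ref{passagecor} then gives, for generic $k$, strong generators $t_0,\dots,t_{n-1}$ of $(\cC_k)^{\mathbb Z/2\mathbb Z}$ of weights $2,4,\dots,2n$ lifting $\tilde j^1,\dots,\tilde j^{2n-1}$.

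It remains to see that these generators are \emph{minimal}, i.e. that $(\cC_k)^{\mathbb Z/2\mathbb Z}$ is of type exactly $\cW(2,4,\dots,2n)$. For this I would apply the Proposition following Corollary \ref{passagecor}: since every $\alpha_k$ has weight $\le 2n$, it suffices to verify that $\cF(n)^{O(n)}$ carries no normally ordered polynomial relation among the $\tilde j^{2k+1}$ and their derivatives of weight $\le 2n$. Passing to associated graded rings and applying Weyl's second fundamental theorem of invariant theory for $O(n)$ \cite{W} acting on $\bigwedge\bigoplus_{j\ge0}V_j$ with $V_j\cong\mathbb C^n$, the relations in $\text{gr}\big(\cF(n)^{O(n)}\big)$ are generated by the $(n+1)$-fold antisymmetrization identities, whose minimal weight exceeds $2n$; so no relation among the $\tilde j^{2k+1}$ occurs below weight $2n+1$, and minimality follows (for generic $k$, via the specialization argument underlying Corollary \ref{passagecor}). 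I expect this last point — pinning down the weight of the first relation in $\cF(n)^{O(n)}$, equivalently extracting the minimality half of Theorem 11.1 of \cite{LV} in precisely the form the Proposition consumes — to be the only delicate step; the remainder is a direct assembly of results already established.
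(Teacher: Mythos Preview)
Your proposal is correct and follows exactly the approach the paper intends: the paper states the corollary as ``an immediate consequence'' of the isomorphism chain ending in $\cF(n)^{O(n)}$ together with Theorem~11.1 of \cite{LV}, and you have simply unpacked what ``immediate'' means here---namely, Corollary~\ref{passagecor} for strong generation and the subsequent Proposition (with Weyl's second fundamental theorem supplying the absence of low-weight relations) for minimality. This is the same route the paper takes implicitly throughout Section~\ref{sect:examples}; your only addition is spelling out that the $\mathbb{Z}/2\mathbb{Z}$-action is compatible with the deformable-family structure, which is routine.
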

\end{example}

\begin{example}\label{ex:so} Let $\gg = \gs\go_n$ and let $\cA^k = V^{k-1}(\gs\go_{n+1}) \otimes \cF(n)$. Recall that we have a map $V^1(\gs\go_n) \ra \cF(n)$, so we have a diagonal map $V^k(\gs\go_n) \ra \cA^k$. As above, there is an action of $\mathbb{Z}/2\mathbb{Z}$ acts on each of the vertex algebras $V^k(\gs\go_n)$, $V^{k-1}(\gs\go_{n+1})$ and $\cF(n)$, and therefore on $\cC^k$, and we are interested in the orbifold $(\cC^k)^{\mathbb{Z}/2\mathbb{Z}}$. We have $\cC^{\infty} = (\cH(n)\otimes \cF(n))^{\text{SO}(n)}$, and $(\cC^{\infty})^{\mathbb{Z}/2\mathbb{Z}} = (\cH(n)\otimes \cF(n))^{\text{O}(n)}$. 

\begin{lemma} $(\cH(n)\otimes \cF(n))^{\text{O}(n)}$ has the following minimal strong generating set.
$$ w^{2k+1} =  \sum_{i=1}^n :\phi^i \partial^{2k+1} \phi^i:,\qquad j^{2k} = \sum_{i=1}^n :\alpha^i \partial^{2k}\alpha^i:,\qquad  0\leq k\leq n-1,$$
$$\mu^k = \sum_{i=1}^n :\alpha^i \partial^k \phi^i:, \qquad 0\leq k \leq 2n-1.$$
In particular, $(\cH(n)\otimes \cF(n))^{\text{O}(n)}$ has even generators in weights $2,2,4,4,\dots, 2n,2n$ and odd generators in weights $\frac{3}{2}, \frac{5}{2},\dots, \frac{4n+1}{2}$.\end{lemma}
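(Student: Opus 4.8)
The plan is to follow the template established by Lemma \ref{asptensorinv}, transferred from the symplectic/$\cS(n)\otimes\cA(n)$ setting to the orthogonal/$\cH(n)\otimes\cF(n)$ setting. First I would pass to the associated graded algebra: since $\text{gr}(\cH(n)\otimes\cF(n)) \cong (\text{Sym}\bigoplus_{j\geq 0} V_j) \otimes (\bigwedge \bigoplus_{j\geq 0} \bar V_j)$ with $V_j \cong \bar V_j \cong \mathbb{C}^n$ as $O(n)$-modules, and the $O(n)$-action preserves the standard bilinear form on each $\mathbb{C}^n$, Weyl's first fundamental theorem of invariant theory for $O(n)$ tells us that $\text{gr}(\cH(n)\otimes\cF(n))^{O(n)}$ is generated as a $\partial$-ring by the quadratics $\sum_i \alpha^i_{(a)}\alpha^i_{(b)}$, $\sum_i \phi^i_{(a)}\phi^i_{(b)}$, and $\sum_i \alpha^i_{(a)}\phi^i_{(b)}$ (the mixed type; note $\sum_i\alpha^i_{(a)}\phi^i_{(b)}$ and $\sum_i\alpha^i_{(b)}\phi^i_{(a)}$ are independent since one factor is even and the other odd). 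By the reconstruction property from Section 2, this yields an infinite strong generating set for $\cH(n)\otimes\cF(n))^{O(n)}$ consisting of the normally ordered analogues $:\alpha^i\partial^a\alpha^i:$-type, $:\phi^i\partial^a\phi^i:$-type, and $:\alpha^i\partial^b\phi^i:$-type elements for all $a,b\geq 0$.

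Next I would carry out the decoupling argument exactly as in Theorem 7.1 of \cite{CLI} and Lemma \ref{asptensorinv}. The three families of generators split into three "sectors": the pure-Heisenberg sector is controlled by the known strong generating set of $\cH(n)^{O(n)}$ — namely $j^{2k} = \sum_i :\alpha^i\partial^{2k}\alpha^i:$ for $0\leq k\leq n-1$ suffices because $\cH(n)^{O(n)}$ is of type $\cW(2,4,\dots,2n)$ (this is the $O(n)$ case, not $Sp$ or the full $\cH(m)^{O(m)}$ of \cite{LIV}; it follows from the dual pair $(O(n), \ldots)$ in $\cH(n)$, cf. \cite{LII}); the pure-fermion sector is controlled by $\cF(n)^{O(n)}$ being of type $\cW(2,4,\dots,2n)$ via Theorem 11.1 of \cite{LV}, giving $w^{2k+1} = \sum_i :\phi^i\partial^{2k+1}\phi^i:$ for $0\leq k\leq n-1$ (odd powers because $:\phi^i\phi^i: = 0$); and the mixed sector requires the $\mu^k = \sum_i :\alpha^i\partial^k\phi^i:$ for $0\leq k\leq 2n-1$. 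One uses the relation of minimal weight coming from Weyl's \emph{second} fundamental theorem for $O(n)$ — the $(n+1)\times(n+1)$ determinantal identity among the quadratics — to produce a decoupling relation expressing the first "excess" generator $\mu^{2n}$ (equivalently the corresponding higher $j$ or $w$) as a normally ordered polynomial in the lower ones and their derivatives, then induct upward in weight to eliminate all generators above the claimed bounds.

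The weight bookkeeping is what fixes the claimed ranges: $:\alpha^i\partial^{2k}\alpha^i:$ has weight $2k+2$, so $0\leq k\leq n-1$ gives weights $2,4,\dots,2n$; $:\phi^i\partial^{2k+1}\phi^i:$ has weight $2k+3$... wait, $\phi^i$ has weight $\tfrac12$ and $\partial^{2k+1}\phi^i$ has weight $2k+\tfrac32$, so the product has weight $2k+2$, giving weights $2,4,\dots,2n$; and $:\alpha^i\partial^k\phi^i:$ has weight $1 + k + \tfrac12 = k+\tfrac32$, so $0\leq k\leq 2n-1$ gives the odd half-integer weights $\tfrac32,\tfrac52,\dots,\tfrac{4n+1}{2}$. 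Minimality follows from the Proposition in Section \ref{deformation}-style reasoning: there are no normally ordered polynomial relations among these generators and their derivatives below the weight of the minimal Weyl relation, which is $\tfrac{(2n+1)+2}{2}\cdot$(something) — concretely the first relation lives in weight $2n+2$ or higher in each homogeneous-type sector — so no further decoupling is possible, and each generator is genuinely needed. The main obstacle I anticipate is the mixed-sector decoupling: one must check carefully that the single minimal-weight relation among the mixed quadratics $\sum_i \alpha^i_{(a)}\phi^i_{(b)}$ (which, together with the pure ones, is the content of Weyl's second theorem for $O(n)$ applied to $n+1$ vectors, some bosonic some fermionic) actually produces a \emph{usable} decoupling relation with invertible leading coefficient — i.e. that the coefficient of $\mu^{2n}$ (respectively the top $j$, $w$) in the quantum-corrected relation is a nonzero constant — rather than vanishing and forcing one to a higher-weight relation. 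In the analogous symplectic cases (Lemma \ref{asptensorinv} and \cite{CLI, CLII}) this works out, and the same mechanism applies here, but the precise identification of the three families and verification that exactly $\{j^{2k}\}_{k=0}^{n-1}, \{w^{2k+1}\}_{k=0}^{n-1}, \{\mu^k\}_{k=0}^{2n-1}$ survive is the technical heart of the proof.
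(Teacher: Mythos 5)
Your overall strategy --- pass to the associated graded algebra, apply Weyl's first fundamental theorem for $O(n)$ to obtain an infinite strong generating set of quadratics of the three types, then decouple down to the finite set and do the weight bookkeeping --- is exactly the paper's proof, which is recorded in one line as ``the same as the proof of Lemma \ref{asptensorinv}.'' The identification of the three families, the parity observation for the pure fermion quadratics, and the weight computations are all correct.

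There is, however, one genuine error in the justification you give for truncating the pure-Heisenberg family at $j^{2(n-1)}$. You assert that $\cH(n)^{O(n)}$ is of type $\cW(2,4,\dots,2n)$; it is not. As recalled in Section \ref{sect:freeorbifold}, $\cH(m)^{O(m)}$ is (conjecturally, and provably for $m\leq 6$) of type $\cW(2,4,\dots,m^2+3m)$ --- already $\cH(1)^{\mathbb{Z}/2\mathbb{Z}}$ requires a weight-$4$ generator --- and in general it is only known to be strongly generated by $j^0,\dots,j^{2K}$ for some finite $K$. So the pure-Heisenberg sector of the tensor product is \emph{not} controlled by the orbifold of the Heisenberg factor alone; if it were, you would need $j^{2k}$ up to weight $n^2+3n$. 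The correct mechanism, which is the actual point of the Lemma \ref{asptensorinv} template, is that the mixed odd generators supply extra decoupling relations: normally ordered products such as $:\mu^a\mu^b:$ contain, via the fermionic contraction of $\partial^a\phi^i$ against $\partial^b\phi^j$, pure-$\alpha$ quadratics $:\alpha^i\partial^c\alpha^i:$ with nonzero coefficients, and these eliminate all $j^{2k}$ with $k\geq n$. The symplectic analogue makes the point clearly: $\cS(n)^{Sp(2n)}$ is of type $\cW(2,4,\dots,2n^2+4n)$ on its own, yet inside $(\cA(n)\otimes\cS(n))^{Sp(2n)}$ only the $w^{2k+1}$ of weights $2,\dots,2n$ survive. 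Your proposal does gesture at ``the relation of minimal weight'' and an upward induction, so the gap is repairable, but as written the stated reason for the bound $k\leq n-1$ on the $j^{2k}$ is false and must be replaced by the mixed-sector decoupling argument.
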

The proof is the same as the proof of Lemma \ref{asptensorinv}, and it implies that $(\cC^k)^{\mathbb{Z}/2\mathbb{Z}}$ has strong generators in the same weights for generic values of $k$. Moreover, $(\cH(n)\otimes \cF(n))^{\text{O}(n)}$ has an $N=1$ superconformal structure with generators $L = -w^0 + \frac{1}{2} j^0$ and $\mu^0$, which deforms to an $N=1$ structure on $(\cC^k)^{\mathbb{Z}/2\mathbb{Z}}$. \end{example}

\begin{example} \label{ex:para} Let $\gg$ be a simple, finite-dimensional Lie algebra of rank $d$ with Cartan subalgebra $\gh$. For a positive integer $k$, the {\it parafermion algebra} $N_k(\gg)$ is defined to be $\text{Com}(\cH(d), L_k(\gg))$, where $\cH(d)$ is the Heisenberg algebra corresponding to $\gh$ and $L_k(\gg)$ is the irreducible affine vertex algebra at level $k$. The coset $\cC^k(\gg) = \text{Com}(\cH(d), V^k(\gg))$ is defined for all $k\in \mathbb{C}$, and for a positive integer $k$, $N_k(\gg)$ is the irreducible quotient of $\cC^k(\gg)$ by its maximal proper ideal. In the case $\gg = \gs\gl_2$, it follows from Theorems 2.1 and 3.1 of \cite{DLWY} that $\cC^k(\gs\gl_2)$ is of type $\cW(2,3,4,5)$ for all $k\neq 0$. This was used to establish the $C_2$-cofiniteness of $N_k(\gs\gl_2)$ for positive integer values of $k$, and plays an important role in the structure of $N_k(\gg)$ for a general simple $\gg$ \cite{ALY}.

For any simple $\gg$ of rank $d$, a choice of simple roots give rise to $d$ copies of $\gs\gl_2$ inside $\gg$ which generate $\gg$. We have corresponding embeddings of $\cW(2,3,4,5)$ into $\cC^k(\gg)$, whose images generate $\cC^k(\gg)$ \cite{DWI}. However, these do not {\it strongly} generate $\cC^k(\gg)$. Corollary \ref{maincorii} implies that $\cC^k(\gg)$ is strongly finitely generated for generic values of $k$ for any simple $\gg$. We shall construct a minimal strong generating set for $\cC^k(\gs\gl_3)$ consisting of $30$ elements.

We work in the usual basis for $\gs\gl_3$ consisting of $\{\xi_{ij}|\ i\neq j\}$ together with $\{\xi_{ii} - \xi_{i+1,i+1}\}$ for $i=1,2$. We have $\lim_{k\ra \infty} V^k(\gs\gl_3) \cong \cH(2) \otimes \tilde{\cA}$ where $\tilde{\cA} \cong \cH(6)$ with generators $\alpha^{12}, \alpha^{23}, \alpha^{13}, \alpha^{21}, \alpha^{32}, \alpha^{31}$. After suitably rescaling, these generators satisfy
$$\alpha^{12}(z) \alpha^{21}(w) \sim (z-w)^{-2},\qquad \alpha^{23}(z) \alpha^{32}(w) \sim (z-w)^{-2},\qquad \alpha^{13}(z) \alpha^{31}(w) \sim (z-w)^{-2}.$$ Note that $\cH(6)$ carries an action of $G = \mathbb{C}^*\times \mathbb{C}^*$ which is infinitesimally generated by the action of $\gh$.

\begin{lemma} $\cH(6)^{G}$ is of type $\cW(2^3, 3^5, 4^7, 5^9, 6^4, 7^2)$. In other words, a minimal strong generating set consists of $3$ fields in weight $2$, $5$ fields in weight $3$, $7$ fields in weight $4$, $9$ fields in weight $5$, $4$ fields in weight $6$, and $2$ fields in weight $7$.
\end{lemma}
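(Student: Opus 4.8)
The plan is to run the standard associated-graded / invariant-theory / decoupling argument, parallel to the proof of Lemma~\ref{asptensorinv} and of Theorem~7.1 of \cite{CLI}. First I would produce a redundant, infinite strong generating set. Since $\cH(6)\cong\text{gr}(\cH(6))$ and $\text{gr}(\cH(6)^{G})\cong\text{gr}(\cH(6))^{G}\cong\bigl(\text{Sym}\bigoplus_{j\geq 0}V_j\bigr)^{G}$ with each $V_j\cong\mathbb{C}^{6}$, the whole question is governed by the invariant theory of the torus $G=\mathbb{C}^{*}\times\mathbb{C}^{*}$ acting on $V_0$ through the six root lines of $\gs\gl_3$. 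Writing each $\alpha^{ij}$ as a directed edge $i\to j$, a monomial is $G$-invariant precisely when the corresponding multiset of edges is balanced at every vertex, so by the first fundamental theorem for a torus $\mathbb{C}[V_0]^{G}$ is generated by the minimal such invariants (the simple directed cycles in $K_3$): the three quadratic ``$2$-cycles'' $q_{12}=\alpha^{12}\alpha^{21}$, $q_{23}=\alpha^{23}\alpha^{32}$, $q_{13}=\alpha^{13}\alpha^{31}$ and the two cubic ``$3$-cycles'' $c_{+}=\alpha^{12}\alpha^{23}\alpha^{31}$, $c_{-}=\alpha^{13}\alpha^{32}\alpha^{21}$. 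By the second fundamental theorem the relations among these five are generated by the single relation $c_{+}c_{-}=q_{12}q_{23}q_{13}$. Hence by Theorem~\ref{weylfinite} and the reconstruction property \cite{LL}, $\cH(6)^{G}$ is strongly generated by the normally ordered vertex operators attached to the polarizations of these five invariants, namely three quadratic families $J^{rs}_{a}=\,:\alpha^{rs}\partial^{a}\alpha^{sr}:$ ($a\geq 0$, for $\{rs\}\in\{12,23,13\}$) and two cubic families $C^{\pm}_{a,b}$ ($a,b\geq 0$).

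Next I would carry out the decoupling. For a single pair the vertex subalgebra generated by $\{J^{rs}_{a}\}_{a\geq 0}$ is the invariant algebra $\cH(2)^{\mathbb{C}^{*}}$ of a rank-two Heisenberg algebra under a hyperbolic torus action; it equals $\lim_{k\to\infty}\cC_k(\gs\gl_2)$ and is of type $\cW(2,3,4,5)$ by \cite{DLY,DLWY}, so every $J^{rs}_{a}$ with $a\geq 4$ decouples into a normally ordered polynomial in $J^{rs}_{0},\dots,J^{rs}_{3}$ and their derivatives. For the cubic families the essential observation is that, although no polarization of $c_{\pm}$ decouples inside $\text{gr}(\cH(6)^{G})$, in the vertex algebra the normally ordered product $:J^{rs}_{a}C^{\pm}_{b,c}:$ carries a quantum correction of filtration degree $3$ which is itself a polarization of $c_{\pm}$ with one of its three derivative indices raised (each of the three pairs $\{rs\}$ shares a contractible field with $C^{\pm}$). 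Comparing these products with their rearrangements via skew-symmetry and quasi-associativity of the normal ordering, and using the quadratic decoupling already in hand, produces relations among the cubic polarizations of each fixed weight; feeding these into the bootstrap of Theorem~7.1 of \cite{CLI} decouples $C^{\pm}_{a,b}$ for $a+b\geq 5$ in terms of strictly lower-weight generators and reduces the number of independent cubic generators to two in weight $6$ and one in weight $7$ for each of $c_{+}$ and $c_{-}$.

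It then remains to count and to verify minimality. The surviving generators are one quadratic generator in each of the weights $2,3,4,5$ for each of the three pairs, i.e.\ $(3,3,3,3,0,0)$ in weights $(2,3,4,5,6,7)$, together with, for each of $c_{+}$ and $c_{-}$, one generator in weight $3$, two in weight $4$, three in weight $5$, two in weight $6$ and one in weight $7$, i.e.\ $(0,2,4,6,4,2)$; the total is $(3,5,7,9,4,2)$, so $\cH(6)^{G}$ is of type $\cW(2^3,3^5,4^7,5^9,6^4,7^2)$ and has $30$ strong generators. For minimality, every relation among the generators has leading term lying in the ideal generated by the polarized forms of $c_{+}c_{-}=q_{12}q_{23}q_{13}$ and of the quadratic relations inside each pair, all of weight at least $6$; hence no generator of weight $\leq 5$ can be deleted, and a leading-degree argument — a further decoupling relation for a weight-$6$ or weight-$7$ cubic generator would, after subtracting the weight-$6$ and weight-$7$ relations, reduce to a nonexistent relation among degree-$3$ polarizations — shows the weight-$6$ and weight-$7$ generators are necessary as well. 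The main obstacle is this cubic decoupling: pinning down \emph{exactly} which cubic polarizations survive and proving that the process stabilizes in weight $7$ requires a careful, in practice computer-assisted, bookkeeping of the polarized forms of $c_{+}c_{-}=q_{12}q_{23}q_{13}$ and of the degree-$3$ corrections in the products $:J^{rs}_{a}C^{\pm}_{b,c}:$ — somewhat more intricate than, but structurally identical to, the computations behind Lemma~\ref{asptensorinv} and Theorem~7.1 of \cite{CLI}.
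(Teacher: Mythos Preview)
Your approach is essentially the same as the paper's: both obtain the initial strong generating set from torus invariant theory (the three quadratic families and two cubic families), reduce each quadratic family to $\cW(2,3,4,5)$, and then decouple the higher cubic polarizations via the degree-$3$ quantum corrections appearing in products $:q\,C:$. The only substantive difference is that the paper writes down the decoupling relations explicitly, e.g.\ $:q^{12}_{0,0}\,c_{i,j,k}: - :q^{12}_{i,0}\,c_{0,j,k}: = -\tfrac{i}{2i+4}\,c_{i+2,j,k}$ for $i\geq 1$ (and symmetrically in the other indices), which immediately bounds each index by $2$ and, after the $\partial$-reduction, yields exactly $\{c_{0,j,k},c'_{0,j,k}\mid 0\leq j,k\leq 2\}$; your sketch describes the same mechanism qualitatively and defers the bookkeeping, while the paper's explicit formula makes the weight-$6$ and weight-$7$ counts immediate rather than something to be verified.
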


\begin{proof} By classical invariant theory, $\cH(6)^G$ has a strong generating set consisting of the normally ordered monomials \begin{equation} \begin{split} & q^{12}_{i,j} =\ :\partial^i \alpha^{12} \partial^j \alpha^{21}:,\qquad q^{13}_{i,j} =\ :\partial^i \alpha^{13} \partial^j \alpha^{31}:,\qquad q^{23}_{i,j} =\ :\partial^i \alpha^{23} \partial^j \alpha^{32}:,\qquad i,j \geq 0,\\
& c_{i,j,k} =\ :\partial^i \alpha^{12} \partial^j \alpha^{23} \partial^k \alpha^{31}:,\qquad c'_{i,j,k} =\ :\partial^i \alpha^{21} \partial^j \alpha^{32} \partial^k \alpha^{13}:, \qquad i,j,k \geq 0.\end{split} \end{equation}
Not all of these generators are necessary. In fact, $\{q^{12}_{0,i}|\ 0\leq i \leq 3\}$, $\{q^{23}_{0,i}|\ 0\leq i \leq 3\}$, and $\{q^{13}_{0,i}|\ 0\leq i \leq 3\}$ generate three commuting copies of $\cW(2,3,4,5)$, and all the above quadratics lie in one of these copies. Similarly, we need at most $\{c_{i,j,k}, c'_{i,j,k}|\ i,j,k\leq 2\}$. This follows from the decoupling relations 
\begin{equation} \begin{split}
&:q^{12}_{0,0} c_{i,j,k}: - :q^{12}_{i,0} c_{0,j,k}:\  = - \frac{i}{2i+4} c_{i+2,j,k},\qquad i\geq 1,\qquad j,k\geq 0,\\
&:q^{23}_{0,0} c_{i,j,k}: - :q^{23}_{j,0} c_{i,0,k}:\  = - \frac{j}{2j+4} c_{i,j+2,k},\qquad j\geq 1, \qquad i,k\geq 0,\\
&:q^{31}_{0,0} c_{i,j,k}: - :q^{31}_{i,0} c_{i,j,0}:\  = - \frac{k}{2k+4} c_{i,j,k+2},\qquad k\geq 1, \qquad  i,j \geq 0,\\
&:q^{21}_{0,0} c'_{i,j,k}: - :q^{21}_{i,0} c'_{0,j,k}:\  = - \frac{i}{2i+4} c'_{i+2,j,k},\qquad i\geq 1,\qquad j,k\geq 0,\\
&:q^{32}_{0,0} c'_{i,j,k}: - :q^{32}_{j,0} c'_{i,0,k}:\  = - \frac{j}{2j+4} c'_{i,j+2,k},\qquad j\geq 1, \qquad i,k\geq 0,\\
&:q^{13}_{0,0} c'_{i,j,k}: - :q^{13}_{i,0} c'_{i,j,0}:\  = - \frac{k}{2k+4} c'_{i,j,k+2},\qquad k\geq 1, \qquad  i,j \geq 0.\end{split} \end{equation}
There are some relations among the above cubics and their derivatives, such as $\partial c_{0,0,0} = c_{1,0,0} + c_{0,1,0} + c_{0,0,1}$. It is not difficult to check that a minimal strong generating set for $\cH(6)^{G}$ consists of 
$$\{q^{12}_{0,i}, q^{23}_{0,i}, q^{13}_{0,i}|\ 0\leq i \leq 3\} \cup \{c_{0,j,k}, c'_{0,j,k}|\ 0 \leq j,k\leq 2\}.$$ In particular, $\cH(6)^{G}$ is of type $\cW(2^3, 3^5, 4^7, 5^9, 6^4, 7^2)$. 
\end{proof}

\begin{cor} For generic values of $k$, $\cC^k(\gs\gl_3)$ is also of type $\cW(2^3, 3^5, 4^7, 5^9, 6^4, 7^2)$.\end{cor}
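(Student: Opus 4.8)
The plan is to read the Corollary off the general machinery once $\lim_{k\ra\infty}\cC_k(\gs\gl_3)$ has been identified as $\cH(6)^G$. First I would observe that $\cC_k(\gs\gl_3)=\text{Com}(\cH(2),V_k(\gs\gl_3))$ is a coset of the type treated in Corollary~\ref{maincori}, taken with $\gg=\gh$ the Cartan subalgebra of $\gs\gl_3$ (so $\dim\gg=2$), $B$ the restriction of the normalized form, and $\cA_k=V_k(\gs\gl_3)$. One checks that $\cA_k=V_k(\gs\gl_3)$ is good in the sense of Section~\ref{sect:mainresult}: it is the quotient by $(\kappa-\sqrt k)$ of the deformable family $\cV$ of Example~\ref{avsa}, and $\cA_\infty=\lim_{k\ra\infty}V_k(\gs\gl_3)\cong\cH(8)$ decomposes as $\cH(2)\otimes\tilde\cA$ with $\tilde\cA\cong\cH(6)$, the Heisenberg algebra on the six root vectors $\alpha^{12},\alpha^{23},\alpha^{13},\alpha^{21},\alpha^{32},\alpha^{31}$; the torus $G=\mathbb{C}^*\times\mathbb{C}^*$ infinitesimally generated by $\gh$ fixes the $\cH(2)$ factor and acts on $\tilde\cA$ through the roots of $\gs\gl_3$.

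With $\cA_k$ good, the theorem of Section~\ref{sect:mainresult} computing $\lim_{k\ra\infty}\cC_k$ gives a vertex algebra isomorphism $\cC_\infty:=\lim_{k\ra\infty}\cC_k\cong\tilde\cA^G=\cH(6)^G$, and the preceding Lemma provides the explicit minimal strong generating set
\[
U=\{q^{12}_{0,i},\,q^{23}_{0,i},\,q^{13}_{0,i}\mid 0\le i\le 3\}\cup\{c_{0,j,k},\,c'_{0,j,k}\mid 0\le j,k\le 2\}
\]
for $\cH(6)^G$, so that $\cH(6)^G$ is of type $\cW(2^3,3^5,4^7,5^9,6^4,7^2)$. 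Now I would apply Lemma~\ref{passage} and Corollary~\ref{passagecor} to the deformable family $\cC$ with $\cC_k=\cC/(\kappa-\sqrt k)$: the subset $T\subset\cC$ corresponding to $U$ under $\phi$ strongly generates $\cC_k$ for all $k$ outside an at most countable set, and $T$ has the same number of elements in each conformal weight as $U$. Thus for generic $k$ the coset $\cC_k(\gs\gl_3)$ is strongly generated by $3$ fields of weight $2$, $5$ of weight $3$, $7$ of weight $4$, $9$ of weight $5$, $4$ of weight $6$, and $2$ of weight $7$; this already gives the upper bound on the type.

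It remains to prove that $T$ is \emph{minimal} for generic $k$, and this is the part I expect to require the most care. The cleanest route is to show that the number of strong generators of $\cC_k$ in each conformal weight $m$, namely $\dim\cC_k[m]-\dim(\cC_k)_{<m}[m]$ where $(\cC_k)_{<m}$ is the subalgebra generated by the elements of weight $<m$, is independent of $k$ for generic $k$. The first term equals $\rank_{F}\cC[m]=\dim\cC_\infty[m]$ by the deformable family structure. For the second, $(\cC)_{<m}[m]$ is spanned over $F$ by normally ordered products and derivatives of the lifted generators $T_{<m}$, whose images under $\phi$ span $(\cC_\infty)_{<m}[m]$; by the relation-transfer properties of deformable families (Section~\ref{deformation}), the $F$-linear relations among the former are in bijection with the $\mathbb{C}$-linear relations among the latter, so $\dim(\cC_k)_{<m}[m]=\dim(\cC_\infty)_{<m}[m]$ for generic $k$. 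Combining, $\cC_k$ has exactly $3,5,7,9,4,2$ minimal strong generators in weights $2,3,4,5,6,7$ and none elsewhere, which is the assertion. The main obstacle here is the bookkeeping in this comparison: relations among the generators of $\cH(6)^G$ genuinely appear---beginning at weight $6$, where the syzygy $c_{0,0,0}c'_{0,0,0}=q^{12}_{0,0}q^{23}_{0,0}q^{13}_{0,0}$ and the quadratic syzygies among the $q$'s take effect---so one must track carefully which of these relations survive the limit $\kappa\ra\infty$; but, as in the proof of the Lemma and the constructions of Section~\ref{sect:mainresult}, this requires no genuinely new ideas, only a finite computation with the $\circ_n$ products.
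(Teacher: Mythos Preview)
Your argument is correct and follows the machinery the paper sets up; the paper itself states this Corollary without proof, as an immediate consequence of the preceding Lemma together with the deformable-family framework of Sections~\ref{deformation} and~\ref{sect:mainresult}. You are right to single out minimality as the nontrivial point: the paper's Proposition in Section~\ref{deformation} (no relations below weight $N$ implies $T$ minimal) does not apply directly, since, as you observe, $\cH(6)^G$ has normally ordered relations already at weight $6$. Your dimension-counting argument---comparing $\dim\cC_k[m]$ and $\dim(\cC_k)_{<m}[m]$ to their values at $\kappa=\infty$ via the relation-lifting property of deformable families---is the correct way to transfer minimality from $\cC_\infty$ to generic $\cC_k$, and it works exactly as you describe. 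One small clarification: Lemma~\ref{keylemma} is literally stated for orbifolds $\cV^G$, but the relation-lifting principle you need is recorded in full generality for deformable families at the end of Section~\ref{deformation}, and $\cC$ is such a family, so your appeal to it is legitimate.
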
 A similar procedure will yield minimal strong generating sets for $\cC^k(\gg)$ for any simple $\gg$ when $k$ is generic.
\end{example}

\section{Cosets of simple affine vertex algebras inside larger structures}\label{sect:simplecoset}
Let $\cA^k$ be a vertex algebra depending on a parameter $k$ with a weight grading by $\mathbb{Z}_{\geq 0}$, such that all weight spaces are finite-dimensional. Let $\gg$ be simple and assume that $\cA^k$ admits an injective map $V^k(\gg) \ra \cA^k$, and let $\cC^k = \text{Com}(V^k(\gg), \cA^k)$ as before. Suppose that $k$ is a parameter value for which $\cA^k$ is not simple. Let $\cI$ be the maximal proper ideal of $\cA^k$ graded by conformal weight, so that $\cA_k = \cA^k / \cI$ is simple. Let $\cJ$ denote the kernel of the map $V^k(\gg) \ra \cA_k$, and suppose that $\cJ$ is maximal so that $V^k(\gg) / \cJ = L_k(\gg)$. Let $$\cC_k = \text{Com}(L_k(\gg), \cA_k)$$ denote the corresponding coset. There is always a vertex algebra homomorphism $$\pi_k: \cC^k \ra \cC_k,$$ but in general this map need not be surjective. Of particular interest is the case where $k$ is a positive integer and $\cA_k$ is $C_2$-cofinite and rational. It is then expected that $\cC_k$ will also be $C_2$-cofinite and rational. In order to apply our results on $\cC^k$ to the structure of $\cC_k$, we need to determine when $\pi_k$ is surjective, so that a strong generating set for $\cC^k$ descends to a strong generating set for $\cC_k$.

\begin{thm} \label{surjectivity}
Suppose that $\sqrt{k}\notin K$, $k+h^\vee$ is a positive real number, and all zero modes of the currents of the Cartan subalgebra $\gh \subseteq \gg$ in $V^k(\gg)$ act semisimply on $\cA^k$. Then $\pi_k: \cC^k \ra \cC_k$ is surjective.
\end{thm}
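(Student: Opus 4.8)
The plan is to identify, in each conformal weight, the coset $\cC_k$ with the kernel of the Sugawara operator $L^\gg_0$ of $V_k(\gg)$ acting on $\cA_k$, and then to push this identification through the surjection $q\colon\cA_k\twoheadrightarrow\bar\cA_k$, using that $q$ is $\hat\gg$-equivariant and hence commutes with $L^\gg_0$.

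First one reduces to a single conformal weight: $\pi_k$ preserves the grading and all weight spaces are finite dimensional, so it suffices to show $\cC_k[n]\to\bar\cC_k[n]$ is onto for every $n$. Since $k+h^\vee$ is a nonzero real number, $L^\gg\in V_k(\gg)\subset\cA_k$ is defined, of conformal weight $2$, with each affine current $X^\xi$ of conformal weight $1$; thus $L^\gg_0$ acts on each finite-dimensional space $\cA_k[n]$, it satisfies $[L^\gg_0,X^\xi(m)]=-mX^\xi(m)$, and because $q$ restricts to the canonical surjection $V_k(\gg)\to L_k(\gg)$ it intertwines $L^\gg_0$ with the Sugawara operator of $\bar\cA_k$. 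The technical heart is the claim that, when $k+h^\vee>0$, the \emph{generalized} $0$-eigenspace of $L^\gg_0$ on $\cA_k[n]$ is exactly $\cC_k[n]$, and likewise for $\bar\cA_k[n]$. I would prove this in two steps. (i) $L^\gg_0$ has no strictly negative eigenvalue on $\cA_k$: on a vector killed by all positive modes $X^\xi(m)$, $m>0$, $L^\gg_0$ acts as $\frac1{2(k+h^\vee)}$ times the Casimir $C_2(0)$, which is a nonnegative scalar (the finite-dimensional $\gg$-module it generates is completely reducible by Weyl's theorem, the Casimir is a positive scalar on each nontrivial irreducible summand, and $k+h^\vee>0$); a general vector is carried by the positive modes strictly downward in conformal weight, where the commutation relation above and induction on $n$ finish the argument. (ii) If $(L^\gg_0)^Nw=0$, then the span of $w,L^\gg_0w,\dots$ generates under the positive modes a finite-dimensional subspace all of whose components in conformal weights below $\text{wt}(w)$ lie in negative generalized $L^\gg_0$-eigenspaces, hence vanish by (i); therefore $X^\xi(m)w=0$ for $m>0$, so $C_2(0)$ acts nilpotently, hence as $0$, on the finite-dimensional $\gg$-module generated by $w$, and positivity of the Casimir on nontrivial summands forces $X^\xi(0)w=0$ as well, i.e. $w\in\cC_k$. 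The hypothesis that the Cartan zero modes act semisimply is what lets one organize the auxiliary spaces appearing here into finite-dimensional weight spaces on which this linear algebra is valid.

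Granting the key claim, the rest is formal. Absence of a Jordan block at eigenvalue $0$ means the $L^\gg_0$-stable subspace $\cI[n]\subset\cA_k[n]$ decomposes $L^\gg_0$-stably as $(\cI[n]\cap\text{Ker}\,L^\gg_0)\oplus(\cI[n]\cap C)$, where $C$ is the sum of the nonzero generalized eigenspaces of $L^\gg_0$ in $\cA_k[n]$; since $L^\gg_0$ is bijective on $C$ and on $\cI[n]\cap C$, this gives $L^\gg_0(\cI[n])=\cI[n]\cap\text{im}(L^\gg_0|_{\cA_k[n]})$. Now take $\bar\omega\in\bar\cC_k[n]=\text{Ker}(L^\gg_0|_{\bar\cA_k[n]})$ and lift it to some $\tilde\omega\in\cA_k[n]$. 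Then $q(L^\gg_0\tilde\omega)=L^\gg_0\bar\omega=0$, so $L^\gg_0\tilde\omega\in\cI[n]$, and it also lies in $\text{im}(L^\gg_0|_{\cA_k[n]})$; hence $L^\gg_0\tilde\omega=L^\gg_0c$ for some $c\in\cI[n]$, and $\omega:=\tilde\omega-c$ lies in $\text{Ker}(L^\gg_0|_{\cA_k[n]})=\cC_k[n]$ with $q(\omega)=\bar\omega$. This proves surjectivity.

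The main obstacle is exactly the key claim: that the generalized $0$-eigenspace of the Sugawara operator equals the coset, equivalently that there is no Jordan block at eigenvalue $0$. Everything around it is either a grading reduction or formal linear algebra, and it is precisely here that the two hypotheses are used essentially: $k+h^\vee>0$ is what makes the Casimir sign-definite on nontrivial modules, and semisimplicity of the Cartan currents is what keeps the relevant eigenspaces finite dimensional so that the argument can be run weight by weight.
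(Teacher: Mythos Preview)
Your approach is correct and takes a genuinely different route from the paper's. The paper argues by decomposing $\cA_k$ as a direct sum of indecomposable $V_k(\gg)\otimes\cC_k$-modules, separating the summands in which the $V_k(\gg)$-factor is a homomorphic image of the vacuum module from the rest; positivity of $k+h^\vee$ guarantees (via the conformal dimension formula $(\Lambda,\Lambda+2\rho)/2(k+h^\vee)$) that the vacuum module has strictly smaller lowest $L^\gg_0$-eigenvalue than any other, so it cannot occur as a composition factor elsewhere, and the multiplicity spaces of the vacuum-type summands exhaust $\cC_k$. Passing to $\bar\cA_k$ then sends each multiplicity space onto its image in $\bar\cC_k$. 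You bypass the bimodule decomposition entirely: you show directly that on each finite-dimensional weight space $\cA_k[n]$ the coset coincides with the kernel (equivalently, generalized $0$-eigenspace) of $L^\gg_0$, and then surjectivity of $\pi_k$ is the elementary fact that a surjection of finite-dimensional spaces commuting with an operator restricts to a surjection on each generalized eigenspace. Both arguments hinge on the same observation---that when $k+h^\vee>0$ the Sugawara operator has strictly positive eigenvalue on any singular vector lying in a nontrivial finite-dimensional $\gg$-module---but yours is more elementary and self-contained, while the paper's yields a finer structural picture of $\cA_k$ as a bimodule. One small remark: your argument does not actually seem to require the semisimplicity of the Cartan zero modes. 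The finite-dimensionality of the $\gg$-module generated (under zero modes) by a singular vector already follows from the assumed finite-dimensionality of $\cA_k[n]$, and Weyl's theorem then gives complete reducibility; so your route in fact proves a slightly sharper statement than the one recorded in the paper.
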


\begin{proof}
First, $\cA^k$ decomposes into a direct sum of indecomposable $V^k(\gg)\otimes \cC^k$ modules, and this sum is bigraded by the conformal weights of the two conformal vectors. Each bigraded subspace is finite-dimensional since the two conformal weights add up to the total one, and that weight space is finite-dimensional. 

Thus every such indecomposable $V^k(\gg)$-module $M$ appearing in $\cA^k$ must have finite-dimensional lowest weight subspace. Since the zero modes of the Heisenberg subalgebra corresponding to $\gh$ act semisimply, $M$ must be a subquotient of a finite direct sum of Weyl modules, each of which is induced from an irreducible, finite-dimensional $\gg$-module with highest weight $\Lambda$. The conformal dimension of such a module is given by 
$$
\frac{\left(\Lambda+\rho|\Lambda\right)}{2(k+h^\vee)}
$$
and hence the vacuum module has lowest possible conformal dimension. It follows that $\cC^k=\text{Ker}_{\cA^k}(L_0^\gg)$ and $\cC_k=\text{Ker}_{\cA_k}(L_0^\gg)$. By Corollary \ref{cor:eigen}, $L_0^\gg$ acts semisimply on the generalized $L_0^\gg$-eigenspace of eigenvalue zero in $\cA^k$. It follows that $\text{Ker}_{\cA^k}(L_0^\gg)$ surjects onto $\text{Ker}_{\cA_k}(L_0^\gg)$. \end{proof}

This theorem applies in particular to the case where $\gg$ is simple and simply laced, and $\cA^k = V^{k-1}(\gg) \otimes L_1(\gg)$. 
Recall that in this case a level $k$ is called admissible provided it is rational and $k+h^\vee\geq \frac{h^\vee}{u}$ for a positive integer $u$ coprime to the dual Coxeter number $h^\vee$. In this case the simple affine vertex algebra $L_{k+1}(\gg)$ is a subalgebra of $L_{k}(\gg) \otimes L_1(\gg)$ \cite{KW}. We thus obtain the following result, which was used in the coset realization of simple $\cW$-algebras at admissible levels \cite{ACL}.
\begin{cor} Let $\gg$ be simple and simply laced, and $\cA^{k+1} = V^{k}(\gg) \otimes L_1(\gg)$, and let $k$ be an admissible level for $\gg$. Let $\cC^{k+1} = \text{Com}(V^{k+1}(\gg), V^{k}(\gg) \otimes L_1(\gg))$ and $\cC_{k+1} = \text{Com}(L_{k+1}(\gg), L_{k}(\gg) \otimes L_1(\gg))$. Then the map $\pi_{k+1}: \cC^{k+1} \ra \cC_{k+1}$ is surjective. \end{cor}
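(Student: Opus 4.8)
The plan is to obtain the corollary directly from Theorem~\ref{surjectivity}, applied to $\cA_k = V_{k-1}(\gg)\otimes L_1(\gg)$ together with the diagonal embedding $\iota\colon V_k(\gg)\to\cA_k$, $X^\xi\mapsto X^\xi\otimes 1 + 1\otimes X^\xi$. Since $k>1$ is a positive integer, $V_{k-1}(\gg)$ and $L_{k-1}(\gg)$ are the usual positive-level affine vertex algebras, so every object in the statement is well defined, and the whole task reduces to checking that $\cA_k$ fits the framework of Section~\ref{sect:simplecoset} and meets the two hypotheses of Theorem~\ref{surjectivity}.

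First I would establish the structural points of the setup. The map $\iota$ is injective; this is the familiar faithfulness of the diagonal embedding of an affine vertex algebra, which can be seen directly by a weight-by-weight comparison (the ``cross terms'' landing in $\gg\otimes\gg\subset V_{k-1}(\gg)[1]\otimes L_1(\gg)[1]$, together with the PBW basis of $V_{k-1}(\gg)$, detect any would-be kernel element). Since $L_1(\gg)$ is simple, every ideal of $V_{k-1}(\gg)\otimes L_1(\gg)$ has the form $\cN\otimes L_1(\gg)$ for $\cN$ an ideal of $V_{k-1}(\gg)$; hence the maximal proper ideal is $\cI = \cN_{k-1}\otimes L_1(\gg)$ and $\bar{\cA}_k = \cA_k/\cI = L_{k-1}(\gg)\otimes L_1(\gg)$, precisely the algebra in the statement. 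The composite $V_k(\gg)\to\bar{\cA}_k$ is the diagonal map into $L_{k-1}(\gg)\otimes L_1(\gg)$; its image is a highest-weight $\widehat{\gg}$-module of highest weight $0$ on which the Chevalley generators act locally nilpotently (each of the two commuting tensor-factor actions is locally nilpotent), hence is integrable and therefore irreducible, so it is isomorphic to $L_k(\gg)$. Consequently $\cJ := \ker(V_k(\gg)\to\bar{\cA}_k)$ is a proper ideal with simple quotient, forcing $\cJ = \cN_k$ and $V_k(\gg)/\cJ = L_k(\gg)$, exactly as the framework demands.

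It remains to verify the two hypotheses of Theorem~\ref{surjectivity}. First, $k+h^\vee>0$, since $k>1$ is a positive integer and $h^\vee>0$. Second, the zero modes of the Cartan currents act semisimply on $\cA_k$: each conformal weight space of $V_{k-1}(\gg)$ and of $L_1(\gg)$ is finite-dimensional and is preserved by the zero modes $X^\xi_{(0)}$, $\xi\in\gg$, hence is a finite-dimensional $\gg$-module on which $\gh$ acts diagonalizably; therefore $\gh$ acts semisimply on every weight space of $\cA_k = V_{k-1}(\gg)\otimes L_1(\gg)$ through $h_{(0)}\otimes 1 + 1\otimes h_{(0)}$. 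With both hypotheses in place, Theorem~\ref{surjectivity} yields the surjectivity of $\pi_k\colon\cC_k\to\bar{\cC}_k$. The only step that is more than mechanical is the injectivity of $\iota$, but this is standard, so I expect no real obstacle; the remainder is bookkeeping with the ideal structure of a tensor product.
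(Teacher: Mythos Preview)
Your proposal is correct and follows exactly the paper's approach: the paper states this corollary immediately after Theorem~\ref{surjectivity} with no proof beyond the sentence ``This theorem applies in particular to the case where $\gg$ is simple and simply laced, and $\cA = V_{k-1}(\gg) \otimes L_1(\gg)$.'' You simply fill in the routine verifications (injectivity of the diagonal map, identification of $\bar{\cA}_k$ and of the kernel $\cJ$, positivity of $k+h^\vee$, and semisimplicity of the Cartan zero modes) that the paper leaves implicit.
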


\begin{remark} If $V^k(\gg)$ is replaced by $V^k(\gg,B)$ where $\gg$ is semisimple and $B$ is a sum of positive scalar multiples of the normalized Killing forms of the simple summands, a similar statement to Theorem \ref{surjectivity} follows by induction on the number of simple summands. Furthermore, this can be generalized to the case where $\gg$ is reductive since the analogous theorem for cosets of Heisenberg algebras is also straightforward. \end{remark}


\subsection{The case of $\text{Com}(V^k(\gs\gp_2), V^k(\go\gs\gp(1|2)))$}
Fix even generators $H, X^{\pm}$ and odd generators $\phi^{\pm}$ for $V^k(\go\gs\gp(1|2)))$, satisfying
$$
H(z)X^\pm(w)\sim \pm X^\pm(w) (z-w)^{-1}, \quad
H(z)H(w) \sim \frac{k}{2} (z-w)^{-2}, $$ $$ X^+(z)X^-(w) \sim k (z-w)^{-2}+ 2H(w)(z-w)^{-1},
$$ $$ H(x) \phi^{\pm}(w) \sim \pm \frac{1}{2} \phi^{\pm} (z-w)^{-1},\qquad X^{\pm}(z) \phi^{\mp}(w) \sim - \phi^{\pm}(w)(z-w)^{-1},$$ $$\phi^{\pm}(z) \phi^{\pm}(w) \sim \pm \frac{1}{2} X^{\pm}(w)(z-w)^{-1},\qquad \phi^+(z) \phi^-(w) \sim \frac{k}{2}(z-w)^{-2} + \frac{1}{2} H(w)(z-w)^{-1}.$$

Let $\cC^k = \text{Com}(V^k(\gs\gp_2), V^k(\go\gs\gp(1|2)))$, where $V^k(\gs\gp_2)$ is generated by $H,X^{\pm}$. As in Example \ref{avsa}, we may take $K = \{0\}$. By Corollary \ref{deformsymferm}, for generic values of $k$, $\cC^k$ is isomorphic to the Virasoro vertex algebra with $c =  -\frac{k (4k+5)}{(k+ 2) (2k+3)}$. The generator is
$$L =  -\frac{4}{ 2k+ 3} :\phi^+ \phi^-: + \frac{1}{(k+2)(2k+3)} \big(:X^+ X^-: + :H H: \big) + \frac{1+k}{(k+2)(2k+3)}  \partial H.$$ Note that the pole at $k  = -\frac{3}{2}$ is removable. If we rescale $L$ by a factor of $2k+3$ and set $k  = -\frac{3}{2}$, the generator is well-defined but no longer satisfies the Virasoro OPE relation.


By Corollary \ref{cor:grchar}, for all nonzero real numbers $k > -2$, $\cC^k$ coincides with the universal coset $\cC$ (regarded as a vertex algebra over $F_K$), specialized to $\kappa = \sqrt{k}$. Therefore, for all such values except for $k = -\frac{3}{2}$, $\cC^k$ is the universal Virasoro vertex algebra with $c =  -\frac{k (4k+5)}{(k+ 2) (2k+3)}$. Note that for $k$ a positive integer, the map $V^k(\gs\gp_2)\ra V^k(\go\gs\gp(1|2)))$ descends to a map of simple vertex algebras $L_k(\gs\gp_2) \ra L_k(\go\gs\gp(1|2)))$, since the singular vector of $V^k(\gs\gp_2)$ is clearly also null in $V^k(\go\gs\gp(1|2)))$. Suppose that the level is admissible, that is $k=-\frac{3}{2}+ \frac{p}{2p'}$ with $p>1$ and $p'$ coprime odd integers. Then a character computation \cite[Lemma 2.1]{CFK} (see also \cite{KW2}) reveals that the map $V^k(\gs\gp_2) \ra V^k(\go\gs\gp(1|2)))$ again descends to a map of simple vertex algebras $L_k(\gs\gp_2) \ra L_k(\go\gs\gp(1|2)))$.



 As above, let $\cC_k =  \text{Com}(L_k(\gs\gp_2), L_k(\go\gs\gp(1|2)))$. By Theorem \ref{surjectivity}, the map $\pi_k: \cC^k \ra \cC_k$ is surjective and by the same argument as Corollary 2.2 of \cite{ACKL} for admissible levels, $\cC_k$ is simple. We obtain 

\begin{thm} For all admissible levels $k$, that is $k=-\frac{3}{2}+ \frac{p}{2p'}$, $p>1$ and $p'$ coprime odd integers, $\cC_k= \text{Com}(L_k(\gs\gp_2), L_k(\go\gs\gp(1|2)))$ is isomorphic to the rational Virasoro vertex algebra with central charge $$c =  -\frac{k (4k+5)}{(k+ 2) (2k+3)}= 1 - 6 \frac{(p-v)^2}{pv},$$
 with $v=\frac{p+p'}{2}$.
 \end{thm}

\subsection{Rational $N=2$ superconformal algebras} We give a new proof of the $C_2$-cofiniteness and rationality of the simple $N=2$ superconformal algebra with central charge $c=\frac{3k}{k+2}$ when $k$ is a positive integer. Our argument makes use of a coset realization that has been known for many years \cite{DPYZ}, and is the case $n=1$ of Example \ref{ex:KS}. The rationality and regularity of these algebras was first established by Adamovic in \cite{AII}.
First, for $k \in \mathbb{C}$, consider the tensor product $V^k(\gs\gl_2)\otimes \cE$, where $V^k(\gs\gl_2)$ has generators $H, X^{\pm}$ satisfying
$$
H(z)X^\pm(w)\sim \pm X^\pm(w) (z-w)^{-1}, \quad
H(z)H(w) \sim \frac{k}{2} (z-w)^{-2}, $$ $$ X^+(z)X^-(w) \sim k (z-w)^{-2}+ 2H(w)(z-w)^{-1},
$$ and $\cE$ is the $bc$-system with odd generators $b,c$ satisfying $$b(z) b(w) \sim 0, \qquad c(z) c(w) \sim 0, \qquad b(z) c(w) \sim (z-w)^{-1}.$$

Let $\cH \subseteq V^k(\gs\gl_2)\otimes \cE$ denote the Heisenberg algebra with generator $J = H - :bc:$. The zero mode $J_0$ integrates to a $U(1)$ action on $V^k(\gs\gl_2)\otimes \cE$, and we consider the $U(1)$-invariant algebra $(V^k(\gs\gl_2)\otimes \cE)^{U(1)}$, which is easily seen to have the following strong generating set:
\begin{equation} \label{n=2big} :\partial^i X^+ \partial^j X^-:,\qquad :\partial^i X^+ \partial^j b:,\qquad :\partial^i X^- \partial^j c:,\qquad :\partial^i b \partial^j c:\qquad i,j \geq 0.\end{equation}
Note that
\begin{equation} \label{n=2econ} :\partial^i X^+ X^-:,\qquad :\partial^i X^+ b:,\qquad :\partial^i X^- c:,\qquad :\partial^i b c:\qquad i \geq 0,\end{equation} is also a strong generating set for $(V^k(\gs\gl_2)\otimes \cE)^{U(1)}$, since the span of \eqref{n=2econ} and their derivatives coincides with the span on \eqref{n=2big}. 

\begin{lemma} For all $k \in \mathbb{C}$, $(V^k(\gs\gl_2)\otimes \cE)^{U(1)}$ has a minimal strong generating set $$\{H, :X^+ X^-:, :bc:, :X^+ b:, :X^- c:\}.$$
\end{lemma}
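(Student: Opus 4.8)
The plan is to show that the five elements $H$, $:X^+ X^-:$, $:bc:$, $:X^+ b:$, $:X^- c:$ both strongly generate $(V_k(\gs\gl_2)\otimes \cE)^{U(1)}$ and form a minimal such set, working with the good increasing filtration on $V_k(\gs\gl_2)\otimes \cE$ and passing to the associated graded ring. Since $\text{gr}(V_k(\gs\gl_2)\otimes \cE)$ is the $\partial$-ring $\big(\text{Sym}\bigoplus_{j\geq 0}(H_j \oplus X^+_j \oplus X^-_j)\big) \otimes \big(\bigwedge \bigoplus_{j\geq 0}(b_j \oplus c_j)\big)$, and the $U(1)$-action assigns weights $0, +1, -1$ to $H, X^+, X^-$ and $-1, +1$ to $b, c$, classical invariant theory for $GL_1$ (the first fundamental theorem) describes the invariant ring: it is generated by $H_i$ together with the quadratic contractions $X^+_i X^-_j$, $X^+_i b_j$, $X^-_i c_j$, $b_i c_j$ pairing a weight $+1$ variable with a weight $-1$ variable. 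By the reconstruction property quoted after \eqref{assgrad}, lifts of a generating set for this $\partial$-ring strongly generate the vertex algebra, which immediately gives the strong generating set \eqref{n=2big}, and hence \eqref{n=2econ} after reorganizing derivatives.

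Next I would reduce \eqref{n=2econ} to the claimed five-element set. The key is a family of \emph{decoupling relations}: for each of the four quadratic families, normally ordered products of $H$ (or $:X^+X^-:$ appropriately, but $H$ suffices since $H \in V_k(\gs\gl_2)$ acts on $X^\pm, b, c$) with a lower member of the family express the members $:\partial^i X^+ X^-:$, $:\partial^i X^+ b:$, $:\partial^i X^- c:$, $:\partial^i bc:$ for $i\geq 1$ in terms of the $i=0$ members, their derivatives, and products of strictly lower-filtration elements. Concretely, computing in the vertex algebra, $:H \,(:X^+ b:):$ has leading term (in $\text{gr}$) a combination of $:(\partial H) X^+ b:$-type cubics but the \emph{linear-in-filtration-2} part rearranges, upon using $\partial(:X^+ b:) = :(\partial X^+) b: + :X^+ (\partial b):$ together with the OPE of $H$ with $X^+$ and with $b$, into an expression for $:(\partial X^+) b:$ modulo cubic (filtration $\geq 3$) terms and derivatives of $:X^+ b:$. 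Iterating raises $i$. The same mechanism, using the OPEs $H(z)X^-(w)$, $H(z)X^+(w)$, and the fact that $H$ is $U(1)$-neutral, handles $:X^- c:$, $:X^+X^-:$, and $:bc:$; the cubic remainder terms, lying in a strictly lower filtration layer, are then themselves rewritten in the generators by downward induction on filtration degree. This shows the five elements strongly generate.

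For minimality, I would argue by conformal weight: the generators sit in weights $1$ (namely $H$) and $2$ (the four quadratics $:X^+X^-:, :bc:, :X^+b:, :X^-c:$). In weight $1$ the only candidates are multiples of $H$, so $H$ is indispensable; in weight $2$, the space of weight-$2$ elements modulo $\partial(\text{weight }1) = \mathbb{C}\partial H$ and modulo normally ordered products of weight-$1$ elements (i.e. $:HH:$) is, by inspecting $\text{gr}$ and the $GL_1$-invariant theory in bidegree $(2,2)$, exactly four-dimensional, spanned by the images of $:X^+X^-:, :bc:, :X^+b:, :X^-c:$; hence none of them can be removed. I expect the main obstacle to be bookkeeping in the decoupling step — verifying that the quantum corrections to the classical $GL_1$ relations do not obstruct solving for $:\partial^i(\cdots):$ in terms of lower data, i.e. that the relevant scalar coefficients ($i/(2i+c)$-type factors analogous to those displayed in Example \ref{ex:para}) are nonzero for all $k\in\mathbb{C}$ and all $i\geq 1$. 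Because the only $k$-dependence enters through the $H$-OPE with integer residues and the $\cE$-OPE, these coefficients turn out to be $k$-independent rationals of the form $\pm i/(2i+\text{const})$, never zero, which is why the statement holds for \emph{all} $k\in\mathbb{C}$ rather than only generic $k$.
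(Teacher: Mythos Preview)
Your overall strategy --- pass to the associated graded, invoke the first fundamental theorem for $GL_1$ to obtain the quadratic strong generating set \eqref{n=2big}/\eqref{n=2econ}, then produce decoupling relations to shrink this to five elements --- is exactly the paper's approach. The gap is in the decoupling step: your proposed mechanism via normally ordered products with $H$ does not work.

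The field $H$ lives in the $V_k(\gs\gl_2)$ factor and has \emph{trivial} OPE with $b$ and $c$; it does not ``act on $b,c$'' as you assert. Moreover, its OPE with $X^{\pm}$ has only a first-order pole with residue $\pm X^{\pm}$, so $H_{(0)}$ is the charge operator and $H_{(n)}$ annihilates the quadratic generators for $n\geq 1$. Consequently $:H(:X^+b:):$ is a genuine filtration-$3$ element with no filtration-$2$ correction of the form $:(\partial X^+)b:$; there is no ``linear-in-filtration-$2$ part'' to extract. The same obstruction applies to the other three families. The paper's decoupling relations instead take products with $:bc:$ (and, for the $:(\partial^i X^+)X^-:$ family, with $:X^-c:$): for example, because $b_{(-1)}^2=0$ and $b_{(0)}(:bc:) = -b$, one obtains $:(:X^+b:)(:bc:): = -:(\partial X^+)b:$ on the nose, and this iterates. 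These identities involve no $k$-dependent coefficient in front of the term being decoupled, which is why the lemma holds for all $k$.

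A second error is in your minimality argument: the conformal weights are $\text{wt}(H)=\text{wt}(:bc:)=1$, $\text{wt}(:X^+b:)=\text{wt}(:X^-c:)=\frac{3}{2}$, and $\text{wt}(:X^+X^-:)=2$, since $b,c$ are primary of weight $\frac{1}{2}$. Your dimension count in weight $2$ is therefore off; minimality still holds, but you should check it weight by weight at $1$, $\frac{3}{2}$, $2$ separately (and note that in weight $1$ both $H$ and $:bc:$ are needed).
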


\begin{proof}
This is immediate from the following relations that exist for all $i\geq 0$. 
$$ :(:\partial^i X^+ b:)(:X^- c:):  \ =$$ $$ (-1)^i \frac{2}{i+1} :H (\partial^{i+1} b) c: + :(\partial^i X^+) X^- b c: + :(\partial^{i+1} X^+) X^-: - \frac{k}{i+2} :(\partial^{i+2} b) c:,$$ $$:(:\partial^i X^+ b:)(:bc:) = - :\partial^{i+1} X^+ b:,$$
$$:(:\partial^i X^- c:)(:bc:) = \ :\partial^{i+1} X^- c:,$$
$$ :(:\partial^i b c:)(:bc:):\  = - \frac{i+2}{i+1} :(\partial^{i+1} b) c: + \partial \omega,$$ where $\omega$ is a linear combination of elements of the form $\partial^{i-r} :(\partial^r b)c:$ for $r = 0,1,\dots, i$. 
\end{proof}

Next, we replace $H$ and $:bc:$ with $J = H - :bc:$ and $F = H + \frac{k}{2} :bc:$, respectively, and we replace $:X^+ X^-:$ with 
$$L =  \frac{1}{k+2} :X^+ X^-: + \frac{2}{k+2} :Hbc:  -\frac{k}{2(k+2)}:b \partial c:   + \frac{k}{2(k+2)} :(\partial b) c: -  \frac{1}{k+2}\partial H.$$
Clearly $F, L, :X^+ b:, :X^- c:$ commute with $J$, and $L$ is a Virasoro element of central charge $c=\frac{3k}{k+2}$, and $F$ is primary weight one, others are primary weight $\frac{3}{2}$. Moreover, they generate the $N=2$ superconformal algebra. Since $$(V^k(\gs\gl_2) \otimes \cE)^{U(1)} = \mathcal{H} \otimes \text{Com}(\cH, V^k(\gs\gl_2) \otimes \cE),$$ we obtain

\begin{lemma} \label{universaln=2} For all $k\neq -2$, $\cC^k = \text{Com}(\cH, V^k(\gs\gl_2) \otimes \cE)$ has a minimal strong generating set $$\{F, L, :X^+ b:, :X^- c:\},$$ and is isomorphic to the universal $N=2$ superconformal vertex algebra with $c=\frac{3k}{k+2}$.
\end{lemma}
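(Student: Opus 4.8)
The plan is to derive the lemma from the preceding one by choosing convenient generators and then recognizing the $N=2$ structure; fix $k\neq-2$ throughout. First I would change generators. The passage from $\{H,:bc:,:X^+X^-:\}$ to $\{J,F,L\}$ is invertible over $\mathbb{C}$: the substitution $(H,:bc:)\mapsto(J,F)=(H-:bc:,\,H+\tfrac{k}{2}:bc:)$ is linear with determinant $\tfrac{k+2}{2}\neq0$, and $L=\tfrac{1}{k+2}:X^+X^-:+Q$ where $Q$ is a normally ordered polynomial in $H$, $:bc:$ and their derivatives. Since the new generators are obtained from the old ones by an invertible linear change within each conformal weight together with the addition of normally ordered polynomials in generators of strictly lower weight, strong generation and minimality are preserved; so by the preceding lemma $\{J,F,L,:X^+b:,:X^-c:\}$ is a minimal strong generating set of $(V_k(\gs\gl_2)\otimes\cE)^{U(1)}$.

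Next I would descend to the coset. A short operator product computation gives $J(z)J(w)\sim\tfrac{k+2}{2}(z-w)^{-2}$, so for $k\neq-2$ the field $J$ generates a rank one Heisenberg algebra $\cH$, and a direct check shows $J$ has regular operator product with each of $F$, $L$, $:X^+b:$, $:X^-c:$; hence these four fields lie in $\cC_k=\text{Com}(\cH,V_k(\gs\gl_2)\otimes\cE)$. Since $J_0$ acts semisimply on $(V_k(\gs\gl_2)\otimes\cE)^{U(1)}$ (as zero) and $\cH$ has nonzero level, the standard coset decomposition for a Heisenberg subalgebra gives $(V_k(\gs\gl_2)\otimes\cE)^{U(1)}\cong\cH\otimes\cC_k$, with $J$ generating the first factor and $F,L,:X^+b:,:X^-c:$ lying in the second. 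A minimal strong generating set of $\cH\otimes\cC_k$ consisting of $J$ together with elements of $\cC_k$ necessarily restricts to a minimal strong generating set of $\cC_k$: strong generation descends after discarding the $\cH$-factor, and a normally ordered relation inside $\cC_k$ would already contradict minimality inside $\cH\otimes\cC_k$. Thus $\{F,L,:X^+b:,:X^-c:\}$ is a minimal strong generating set of $\cC_k$.

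Writing $G^+=:X^+b:$, $G^-=:X^-c:$, a direct computation of the remaining operator products then shows (in the standard normalization) that $L$ is a Virasoro field of central charge $c=\tfrac{3k}{k+2}$, that $F$ is primary of weight $1$ with $F(z)F(w)\sim\tfrac{c}{3}(z-w)^{-2}$, that $G^{\pm}$ are primary of weight $\tfrac32$ with $F$-charge $\pm1$ and $G^{\pm}(z)G^{\pm}(w)\sim0$, and that
$$G^+(z)G^-(w)\sim\tfrac{2c}{3}(z-w)^{-3}+2F(w)(z-w)^{-2}+\big(2L(w)+\partial F(w)\big)(z-w)^{-1},$$
i.e.\ $L,F,G^{\pm}$ generate the $N=2$ superconformal algebra at $c=\tfrac{3k}{k+2}$. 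This yields a surjective vertex algebra homomorphism $\pi\colon\cN_c\twoheadrightarrow\cC_k$ from the universal $N=2$ superconformal vertex algebra of that central charge.

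The remaining, and hardest, point is that $\pi$ is injective; for $k$ a positive integer the value $c=\tfrac{3k}{k+2}$ is a minimal-model central charge and $\cN_c$ is not simple, so simplicity cannot be invoked. I would instead compare graded characters. The vertex algebras $V_k(\gs\gl_2)$, $\cE$, $\cH$ and $\cN_c$ are all freely generated ($\cN_c$ because it is the universal enveloping vertex algebra of a vertex Lie algebra), so each has an explicit product character, and the character of $V_k(\gs\gl_2)\otimes\cE$ refined by the $J_0$-charge is the product of the first two. Extracting the charge-zero part, the identity $\text{ch}\,(V_k(\gs\gl_2)\otimes\cE)^{U(1)}=\text{ch}\,\cH\cdot\text{ch}\,\cN_c$ reduces to
$$\big[y^0\big]\,\prod_{n\ge1}\frac{(1+yq^{n-1/2})(1+y^{-1}q^{n-1/2})}{(1-yq^n)(1-y^{-1}q^n)}=\frac{\prod_{n\ge0}(1+q^{n+3/2})^2}{\prod_{n\ge1}(1-q^n)\,\prod_{n\ge2}(1-q^n)},$$
which follows from the Jacobi triple product (equivalently, from the classical character formulas for this $N=2$ coset). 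Since $\cH\otimes\cC_k\cong(V_k(\gs\gl_2)\otimes\cE)^{U(1)}$ and $\text{ch}\,\cH$ is an invertible power series, it follows that $\text{ch}\,\cC_k=\text{ch}\,\cN_c$, so $\pi$ is an isomorphism. In summary, the only genuinely nontrivial ingredient is this last character identity (equivalently, the freeness of $\cC_k$); the rest is a change of variables together with finite, if somewhat tedious, operator product expansion computations.
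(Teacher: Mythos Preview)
Your argument for the minimal strong generating set follows the paper's exactly: change from $\{H,:bc:,:X^+X^-:\}$ to $\{J,F,L\}$ by an invertible upper-triangular substitution, then peel off the Heisenberg factor via $(V_k(\gs\gl_2)\otimes\cE)^{U(1)}\cong\cH\otimes\cC_k$. The paper simply stops there and asserts the lemma without further comment on why the resulting map $\pi\colon\cN_c\to\cC_k$ is injective; you are right to flag this as the real content, especially at positive integer $k$ where $\cN_c$ is not simple.

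Your proposed fix via a direct character comparison is correct in principle, but the claim that the required identity ``follows from the Jacobi triple product'' is too quick: extracting the $y^0$ coefficient of $\prod_{n\ge1}\frac{(1+yq^{n-1/2})(1+y^{-1}q^{n-1/2})}{(1-yq^n)(1-y^{-1}q^n)}$ and matching it to the free $N=2$ character is not a one-line consequence of the triple product, and you owe the reader either a reference or an honest $q$-series derivation. A cleaner route, more in keeping with the paper's deformable-family philosophy, avoids any explicit identity. The graded dimensions of $(V_k(\gs\gl_2)\otimes\cE)^{U(1)}$, and hence of $\cC_k$, are manifestly independent of $k$, since the underlying vector space and the $J_0$-grading of $V_k(\gs\gl_2)\otimes\cE$ do not depend on $k$; likewise the graded dimensions of $\cN_c$ are independent of $c$ by PBW. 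For generic $k$ the value $c=\tfrac{3k}{k+2}$ is generic, so $\cN_c$ is simple and the surjection $\pi$ is automatically an isomorphism. Equality of characters at one generic value then forces $\mathrm{ch}\,\cC_k=\mathrm{ch}\,\cN_c$ for every $k\ne-2$, and hence $\pi$ is an isomorphism in all cases. This replaces your appeal to a nontrivial $q$-series identity by a soft dimension-count-plus-specialization argument that needs no computation beyond what you have already done.
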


Next, for $k$ a positive integer, we consider the tensor product $L_k(sl_2) \otimes \cE$. By abuse of notation, we denote the generators by $H, X^{\pm}, b,c$, as above. Recall that $L_k(sl_2) \otimes \cE$ is the simple quotient of $V^k(\gs\gl_2) \otimes \cE$ by the ideal $\cI_k$ generated by $(X^+)^{k+1}$. Let $\cL_k = \cC^k / (\cI_k \cap \cC^k)$.

\begin{lemma} \label{simplen=2} $\cL_{k} = \text{Com}(\cH,  L_k(\gs\gl_2) \otimes \cE)$ where $\cH$ is the Heisenberg algebra generated by $J = H - :bc:$. In particular, $\cL_{k}$ is simple.
\end{lemma}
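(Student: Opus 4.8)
The plan is to obtain both assertions by tracking how the functor of $U(1)$-invariants behaves under the quotient $\cV := V_k(\gs\gl_2)\otimes\cE \ra L_k(\gs\gl_2)\otimes\cE = \cV/\cI_k$, where $\cI_k$ is the ideal generated by $(X^+)^{k+1}$. First I would check that $\cI_k$ is $U(1)$-stable: the generator $(X^+)^{k+1}$ is a $J_0$-eigenvector (of charge $k+1$, since $X^+$ has $J_0$-charge $1$ while $b,c$ do not enter), so the ideal it generates is graded by $J_0$-charge. As $J_0$ acts semisimply with integer eigenvalues on $\cV$, taking $U(1)$-invariants (equivalently, passing to the charge-zero subspace) is exact, so $(L_k(\gs\gl_2)\otimes\cE)^{U(1)} = \cV^{U(1)}/\cI_k^{U(1)}$.

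Next I would invoke the identification $\cV^{U(1)} = \cH\otimes\cC_k$ used in the proof of Lemma~\ref{universaln=2}, where $\cH = \langle J\rangle$ with $J = H - :bc:$; a direct computation gives $J(z)J(w)\sim\frac{k+2}{2}(z-w)^{-2}$, so $\cH$ is a nondegenerate, hence simple, Heisenberg vertex algebra for $k\neq-2$, in particular for positive integers $k$. Since every ideal of a tensor product of a simple vertex algebra with another vertex algebra has the form (simple factor)$\,\otimes\,$(ideal of the other factor), we get $\cI_k^{U(1)} = \cH\otimes\cJ$ with $\cJ = \cI_k^{U(1)}\cap\cC_k = \cI_k\cap\cC_k$; thus $(L_k(\gs\gl_2)\otimes\cE)^{U(1)} \cong \cH\otimes\big(\cC_k/(\cI_k\cap\cC_k)\big) = \cH\otimes\cL_k$. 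Because $\mathbf 1\notin\cI_k\cap\cC_k$, this copy of $\cH$ embeds faithfully into $L_k(\gs\gl_2)\otimes\cE$. Any field commuting with $\cH$ has charge zero, so $\text{Com}(\cH,L_k(\gs\gl_2)\otimes\cE) = \text{Com}\big(\cH,(L_k(\gs\gl_2)\otimes\cE)^{U(1)}\big) = \text{Com}(\cH,\cH\otimes\cL_k) = \cL_k$, using $\text{Com}(\cH,\cH)=\mathbb{C}\mathbf 1$. This gives the first assertion.

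For simplicity I would use descent of ideals through the invariants functor. Let $\cN$ be a nonzero ideal of $\cL_k$; then $\cH\otimes\cN$ is a nonzero ideal of $(L_k(\gs\gl_2)\otimes\cE)^{U(1)}$, and the ideal it generates inside $L_k(\gs\gl_2)\otimes\cE$ is nonzero. Since $L_k(\gs\gl_2)\otimes\cE$ is simple — being the tensor product of the simple vertex algebra $L_k(\gs\gl_2)$ with the simple vertex superalgebra $\cE$ — that ideal must be everything. On the other hand, the charge-zero component of the ideal of any vertex algebra $\cV'$ generated by a charge-zero subset $I$ equals the ideal of $(\cV')^{U(1)}$ generated by $I$ (charges add under the operations $a\circ_n x$), hence equals $\cH\otimes\cN$ itself; therefore $\cH\otimes\cN = \cH\otimes\cL_k$, which forces $\cN = \cL_k$. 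So $\cL_k$ is simple. I expect the two points requiring genuine care to be the exactness of the $U(1)$-invariant functor (resting on semisimplicity of $J_0$) and this final ideal-descent step for simplicity; the remainder is bookkeeping with the decomposition $\cV^{U(1)} = \cH\otimes\cC_k$.
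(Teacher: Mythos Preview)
Your proof of the first assertion is correct and follows the same underlying idea as the paper's one-line argument: both exploit that $\cI_k$ is compatible with the $\cH$-module structure on $V_k(\gs\gl_2)\otimes\cE$. The paper simply invokes complete reducibility of $V_k(\gs\gl_2)\otimes\cE$ as an $\cH$-module together with the fact that $\cI_k$ is an $\cH$-submodule; you unpack this by passing through $U(1)$-invariants and the tensor splitting $\cV^{U(1)}=\cH\otimes\cC_k$, which is a perfectly good (and more explicit) route to the same conclusion.

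Your argument for simplicity, however, has a genuine gap at the ideal-descent step. You assert that for any $U(1)$-graded vertex algebra $\cV'$ and any charge-zero subset $I$, the charge-zero part of the ideal of $\cV'$ generated by $I$ coincides with the ideal of $(\cV')^{U(1)}$ generated by $I$, citing only that ``charges add under $a\circ_n x$''. Additivity of charges shows that the charge-zero part of the big ideal is spanned by iterated products $a^{(1)}\circ_{n_1}\cdots a^{(r)}\circ_{n_r} x$ whose \emph{total} charge vanishes, but the individual $a^{(i)}$ need not have charge zero, and there is no obvious way to rewrite such an expression using only elements of $(\cV')^{U(1)}$. In fact, applied to a simple $\cV'$ with $I$ a nonzero proper ideal of $(\cV')^{U(1)}$, your claim would immediately force $I=(\cV')^{U(1)}$; so the claim is equivalent to the very simplicity statement you are trying to prove, and cannot be taken as input.

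What closes the gap is precisely the ingredient the paper highlights: full complete reducibility of $L_k(\gs\gl_2)\otimes\cE$ as an $\cH$-module (not merely semisimplicity of $J_0$). With this, any nonzero ideal of $L_k(\gs\gl_2)\otimes\cE$ decomposes along the Fock isotypic components, and the vacuum isotypic piece is of the form $\cH\otimes(\text{ideal of }\cL_k)$; simplicity of $L_k(\gs\gl_2)\otimes\cE$ then forces this to be everything. Alternatively, you may simply cite the standard fact that the commutant of a nondegenerate Heisenberg subalgebra acting semisimply inside a simple vertex algebra is itself simple.
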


\begin{proof} This is immediate from the fact that $V^k(\gs\gl_2) \otimes \cE$ is completely reducible as an $\cH$-module, and $\cI_k$ is an $\cH$-submodule of $V^k(\gs\gl_2) \otimes \cE$.
\end{proof}

It is well known \cite{DPYZ} that the simple $N=2$ superconformal algebra has a realization inside $\text{Com}(\cH, L_k(\gs\gl_2)\otimes \cE)$ with generators $\{F, L, :X^+ b:, :X^- c:\}$, which are just the images of the generators of $\cC^k$. An immediate consequence of Lemmas \ref{universaln=2} and \ref{simplen=2} is that this $N=2$ algebra is the {\it full} commutant.

\begin{cor} For $k=1,2,3,\dots$, $\cL_k$ is isomorphic to the simple $N=2$ superconformal algebra with central charge $c=\frac{3k}{k+2}$. 
\end{cor}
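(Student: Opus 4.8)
The plan is to deduce the statement formally from Lemmas~\ref{universaln=2} and~\ref{simplen=2}, together with the uniqueness of the simple graded quotient of the universal $N=2$ superconformal vertex algebra. First I would record that the simple-quotient map $V_k(\gs\gl_2)\otimes\cE \twoheadrightarrow L_k(\gs\gl_2)\otimes\cE$ has kernel the ideal $\cI_k$ generated by $(X^+)^{k+1}$, and that by definition $\cL_k=\cC_k/(\cI_k\cap\cC_k)$; hence $\cL_k$ is a quotient of $\cC_k$, by an ideal which is graded by conformal weight (since $(X^+)^{k+1}$ is homogeneous, so are $\cI_k$ and $\cI_k\cap\cC_k$). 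By Lemma~\ref{universaln=2}, for every $k\neq -2$ — in particular for every positive integer $k$ — the vertex algebra $\cC_k$ is isomorphic to the \emph{universal} $N=2$ superconformal vertex algebra with $c=\frac{3k}{k+2}$, with strong generators $F, L, :X^+ b:, :X^- c:$. Thus $\cL_k$ is a conformally graded quotient of this universal $N=2$ algebra, strongly generated by the images of those four fields, which by construction still satisfy the $N=2$ operator product expansions.

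Next I would invoke Lemma~\ref{simplen=2}: since $V_k(\gs\gl_2)\otimes\cE$ is completely reducible over the Heisenberg algebra $\cH=\langle J\rangle$ and $\cI_k$ is an $\cH$-submodule, forming the $\cH$-commutant commutes with passing to the simple quotient, so $\cL_k=\text{Com}(\cH,L_k(\gs\gl_2)\otimes\cE)$ and, crucially, $\cL_k$ is simple. It is nonzero since it contains the vacuum; for $k\geq 1$ the image of $L$ is the nonzero Sugawara-type conformal vector of weight $2$ inside $L_k(\gs\gl_2)\otimes\cE$, so $\cL_k$ is moreover a \emph{nontrivial} simple $\mathbb{Z}_{\geq 0}$-graded quotient of the universal $N=2$ algebra at central charge $\frac{3k}{k+2}$.

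To conclude I would use that the universal $N=2$ superconformal vertex algebra at a fixed central charge has one-dimensional weight-zero space, so a graded ideal is proper exactly when its weight-zero component vanishes; hence the sum of all proper graded ideals is again proper, giving a unique maximal one, $\cM$, which by definition is the kernel of the surjection onto the simple $N=2$ superconformal algebra at that central charge. Writing $\cL_k=\cC_k/\cN$ with $\cN$ a proper graded ideal, simplicity of $\cL_k$ forces $\cN$ to be maximal among proper graded ideals, so $\cN=\cM$. Therefore $\cL_k$ is isomorphic to the simple $N=2$ superconformal algebra with $c=\frac{3k}{k+2}$, and under this isomorphism $F, L, :X^+ b:, :X^- c:$ go to the standard $N=2$ generators, recovering the coset realization of~\cite{DPYZ}.

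The substantive content lies not in this deduction but in the two lemmas it invokes. In Lemma~\ref{universaln=2} the key point is the \emph{no further relations} assertion — that $\{F,L,:X^+ b:,:X^- c:\}$ generate a copy of the \emph{universal}, rather than some intermediate, $N=2$ algebra — which rests on the explicit decoupling relations exhibited there and on the change of generating set $\{H,:X^+X^-:,:bc:\}\rightsquigarrow\{J,F,L\}$; and Lemma~\ref{simplen=2} relies on complete reducibility of $V_k(\gs\gl_2)\otimes\cE$ over $\cH$ and on the $\cH$-stability of $\cI_k$. Given those, the corollary presents no essential obstacle; the only care needed is the grading bookkeeping in the third paragraph.
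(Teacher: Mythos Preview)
Your argument is correct and is essentially the same as the paper's: both deduce the corollary directly from Lemmas~\ref{universaln=2} and~\ref{simplen=2} by observing that $\cL_k$ is a simple graded quotient of the universal $N=2$ algebra and hence must coincide with \emph{the} simple $N=2$ algebra at that central charge. You spell out the uniqueness-of-maximal-ideal step that the paper leaves implicit, but the overall route is identical.
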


It is well known that $L_k(\gs\gl_2)$ contains a copy of the lattice vertex algebra $V_{\sqrt{2k}\mathbb{Z}}$ with generators $\{H, : (X^{\pm})^k:\}$. Moreover, $$\text{Com}(\text{Com}(\langle H \rangle , L_k(\gs\gl_2))) = V_{\sqrt{2k}\mathbb{Z}},$$ where $\langle H \rangle$ denotes the Heisenberg algebra generated by $H$. In particular, $V_{\sqrt{2k}\mathbb{Z}}$ and the parafermion algebra $N_k(\gs\gl_2)$ form a Howe pair (i.e., a pair of mutual commutants) inside $L_k(\gs\gl_2)$, and $\cL_k$ is an extension of $V_{\sqrt{k(k+2)}\mathbb{Z}} \otimes N_k(\gs\gl_2)$. Both $V_{\sqrt{k(k+2)}\mathbb{Z}}$ and $N_k(\gs\gl_2)$ are rational, and the discriminant $\mathbb Z/ \sqrt{k(k+2)}\mathbb{Z}$ of the lattice $\sqrt{k(k+2)}\mathbb{Z}$ acts on $\cL_k$ as automorphism subgroup. The orbifold is $V_{\sqrt{k(k+2)}\mathbb{Z}} \otimes N_k(\gs\gl_2)$ and as a module for the orbifold
$$\cL_k = \bigoplus_{t=0}^{2k-1} M_t,$$
where each $M_t$ is a simple $V_{\sqrt{k(k+2)}\mathbb{Z}} \otimes N_k(\gs\gl_2)$-module \cite{DM}. Each $M_t$ is in fact also $C_1$-cofinite as the orbifold is $C_2$-cofinite, hence Proposition 20 of \cite{MiI}
implies it is a simple current. We thus have a simple current extension of a rational, $C_2$-cofinite vertex algebra of CFT-type and hence by \cite{Y} the extension $\cL_k$ is also $C_2$-cofinite and rational. This provides an alternative proof of Adamovic's theorem that the simple $N=2$ superconfomal algebra with central charge $c=\frac{3k}{k+2}$ for $k = 1,2,3,\dots$, is $C_2$-cofinite and rational.

\end{document}